\begin{document}

\title*{On the thin boundary of the fat attractor}
\author{Artur O. Lopes and Elismar R. Oliveira}
\institute{Artur O. Lopes \at Instituto de Matem\'atica-UFRGS, Avenida Bento Gon\c{c}alves 9500 Porto Alegre-RS Brazil, \email{arturoscar.lopes@gmail.com}
\and E. R. Oliveira \at Instituto de Matem\'atica-UFRGS, Avenida Bento Gon\c{c}alves 9500 Porto Alegre-RS Brazil \email{oliveira.elismar@gmail.com}}
%
%
\maketitle

\abstract*{
For, $0<\lambda<1$, consider the transformation $T(x) = d x $ (mod 1) on the circle $S^1$, a $C^1$ function $A:S^1 \to \mathbb{R}$, and, the map  $F(x,s) = ( T(x) , \lambda \, s + A(x))$, $(x,s)\in S^1 \times \mathbb{R}$.  We denote $\mathcal{B}= \mathcal{B}_\lambda$ the upper boundary of the attractor (known as  fat attractor). We are interested in the regularity  of
$\mathcal{B}_\lambda$, and, also in what happens in the limit when $\lambda\to 1$.
We also address the analysis of  following conjecture which were proposed by R. Bam\'on, J. Kiwi,  J. Rivera-Letelier and R. Urz\'ua:
for any fixed $\lambda$, generically $C^1$ on
the potential $A$, the upper boundary $\mathcal{B}_\lambda$ is formed by a finite number of pieces of smooth unstable manifolds of periodic orbits for $F$.
We show the proof of the conjecture for the class of $C^2$ potentials  $A(x)$ satisfying the twist condition (and another technical condition). We do not need the generic hypothesis. When $\lambda$ is  close to $1$ and the potential $A$ is generic a more precise description can be done. We present explicit examples.
In this case the pieces of $C^1$ curves on the boundary have some special properties. Finally, we present the general analysis of the case where $A$ is Lipschitz and its relation with Ergodic Transport.}

\abstract{
For, $0<\lambda<1$, consider the transformation $T(x) = d x $ (mod 1) on the circle $S^1$, a $C^1$ function $A:S^1 \to \mathbb{R}$, and, the map  $F(x,s) = ( T(x) , \lambda \, s + A(x))$, $(x,s)\in S^1 \times \mathbb{R}$.  We denote $\mathcal{B}= \mathcal{B}_\lambda$ the upper boundary of the attractor (known as  fat attractor). We are interested in the regularity  of
$\mathcal{B}_\lambda$, and, also in what happens in the limit when $\lambda\to 1$.
We also address the analysis of  the following conjecture which were proposed by R. Bam\'on, J. Kiwi,  J. Rivera-Letelier and R. Urz\'ua:
for any fixed $\lambda$,  $C^1$ generically on
the potential $A$, the upper boundary $\mathcal{B}_\lambda$ is formed by a finite number of pieces of smooth unstable manifolds of periodic orbits for $F$.
We show the proof of the conjecture for the class of $C^2$ potentials  $A(x)$ satisfying the twist condition (plus a combinatorial condition). We do not need the generic hypothesis for this result. We present explicit examples. On the other hand, when $\lambda$ is  close to $1$ and the potential $A$ is generic a more precise description can be done.
In this case the finite number of pieces of $C^1$ curves on the boundary have some special properties.
Having a finite number of pieces on this boundary is an important issue in a problem related to semi-classical limits and micro-support. This  was consider in a recent  published work by A. Lopes and J. Mohr.
Finally, we present the general analysis of the case where $A$ is Lipschitz and its relation with Ergodic Transport.}


\section{Introduction}

Consider, $0<\lambda<1$, the transformation $T(x) = d \,x $ (mod 1), where $d \in \mathbb{N}$,  a Lipschitz function $A:  S^1 \to \mathbb{R}$, and, the map
\begin{equation}\label{F}
  F(x,s) = ( T(x) , \lambda \, s + A(x)), \;(x,s)\in S^1 \times \mathbb{R}.
\end{equation}

Note that $F^n (x,s) = ( T^n (x) , \lambda^n  \, s + [\lambda^{n-1} A(x)+ \lambda^{n-2} A(T (x))+...+  \lambda A(T^{n-2} (x)) + A( T^{n-1}(x))  ])$. Here  we use sometimes the natural identification of $S^1$ with the interval $[0,1).$
We will assume that $A$ is at least Lipschitz. In this
case results obtained under such hypothesis could  also apply to the shift case (in this setting the potential  $A$ is defined in the Bernoulli space), that is, for $\sigma$ instead of $T$. For certain results in the paper we assume that $A:S^1 \to \mathbb{R}$ is of class $C^1$ or sometimes $C^2$.

The structure of the paper is the following: some results are of general nature and related just to the concept of $\lambda$-calibrated subaction (to be defined later). In this case you need just to assume that $A$ is Lipschitz (see section \ref{ET}).

Other results are related to the dynamics of $F$ and to the conjecture presented in \cite{BKRU}. In this case we assume that the potential $A$ satisfies some differentiability assumptions and also the twist condition (to be defined later). The analysis of the regularity of the boundary of the attractor requires the understanding of $\lambda$-calibrated subactions.
The twist condition assures some kind of transversality condition as we will see.

Note that for $x$ fixed the transformation $F(x,.)$ is bijective over the fiber over $T(x).$
As an illustration we point out that in the case $d=2$,
given $(x,z), $ with $x\in S^1$ and $z \in \mathbb{R}$, consider $x_1$ and $x_2$ the two preimages of $x$ by $T$. Then,
$F\left(x_1, \frac{z- A(x_1)}{\lambda} \right)= (x,z)= F\left(x_2, \frac{z- A(x_2)}{\lambda} \right).$

It is known that
the non-wandering set $\Omega_\lambda$ of $F=F_\lambda$ is a global attractor of the dynamics of $F$: the forward orbit of every
point in $S^1 \times \mathbb{R}$ converges to $\Omega_\lambda$ and $F$ is transitive on $\Omega_\lambda$. In fact, $F$ is topologically semi-conjugate to
a solenoidal map on $\Omega_\lambda$ (see Section 2 in \cite{BKRU}).

In Figure 1 we show the points of the attractor in the case of $T(x)= 2 x$ (mod 1), $\lambda=0.51$ and $A(x) = \sin (2 \, \pi x)$ (see page 1013 in \cite{T1}). In this case the boundary of the attractor is a finite union of smooth curves.

According to \cite{BKRU} $\Omega_\lambda $ is the set of all $(x,s) $ with a bounded infinite backward orbit (i.e., there exists $C >0$ and
$(x_n,s_n),$ $n \in \mathbb{N},$  such that, $F^n (x_n,s_n)=(x,s)$ and
$|s_n|<C$, for  all, $n \in \mathbb{N}$).

This transformation $F$ is not bijective. Anyway, the fiber over $x$ goes in the fiber over $T(x).$ If $s_1<s_2$ is such that
$(x,s_1)$ and $(x,s_2)$ are in the attractor, then $(x,s)$ is in the attractor for any $s_1<s<s_2$ (see section 2.2 in \cite{BKRU}). Note that the iteration of $F$ preserves order on the fiber, that is, given $x$, if $t>s$, then $\lambda s + A(x)< \lambda t + A(x).$

We denote $\mathcal{B}= \mathcal{B}_\lambda$ the upper boundary of the attractor. We are interested in the regularity of $\mathcal{B}_\lambda$ and also in what happens with this boundary in the limit when $\lambda\to 1$. The upper boundary is invariant by the action of $F$.
The analysis of the lower boundary is similar to the case of the upper boundary and will not be consider here.

We show in a rigorous form explicit examples where this boundary is the union of a finite number of $C^\infty$ curves where the tangent angles are never zero (see section \ref{morep}). We also present numerical simulations showing pictures of the boundary in several different cases.

The study of the dimension of the boundary of strange attractors is a topic of great relevance in non-linear physics \cite{Gr} and \cite{Nu}. The papers \cite{HG}, \cite{AY} and \cite{IPRX} discuss somehow related questions. We want to analyze a case where this boundary may not be a union of  piecewise smooth curves.

In Figure~\ref{figquadstandarditera} we show de
image of the  fat attractor for the case of $ A(x) = -(x-0.5)^2$, $\lambda=0.51,$ and $d=2$. In this case the boundary is the union of two  piecewise smooth curves as we will see.

We present in the end of the paper several pictures obtained from computer simulations which illustrate the mathematical results that  we present here (see section \ref{WE}).

We believe that the terminology fat attractor used by M. Tsujii is due to the fact that when $d=2$, $0.5<\lambda \leq 0.51$, and $A(x)=\sin (2 \pi x)$, then, $F$ is such that there exist a SBR probability which is absolutely continuous with respect to the Lebesgue measure on $S^1 \times \mathbb{R}$ (see Example  1 and Figure 1 in \cite{T1}).

It is known that $\mathcal{B}_\lambda $ is the graph of a Lipschitz function $b_\lambda:S^1 \to \mathbb{R}$ (see \cite{T1} and \cite{BKRU}) if $A$ is Lipschitz. We will give a proof of this fact later.
$b_\lambda$ will be called the $\lambda$-calibrated subaction.
\begin{figure}[h!]
  \centering
  \includegraphics[scale=0.5]{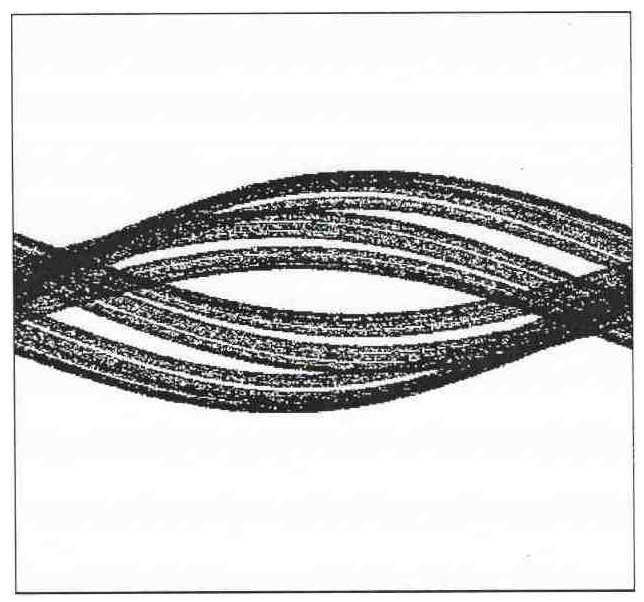}\\
  \caption{The  fat attractor for the case of $ A(x) = \sin(2 \pi x)$, $\lambda=0.51,$ and $d=2$. The picture indicates that the upper boundary is piecewise smooth. It is the envelope of several smooth but  non periodic curves.}\label{figsiniteratsujii}
\end{figure}

One of our main motivations for the present work is the following conjecture (see \cite{BKRU}): for any fixed $\lambda$, generically $C^1$ on
the potential $A$, the upper  boundary $\mathcal{B}_\lambda$ is formed by a finite number of pieces of unstable manifolds of periodic orbits of $F$.

Recently the paper \cite{LMo} shows the importance of having a finite number of pieces on this boundary in a problem related to semi-classical limits and micro-support.

We want to also analyze  cases where this boundary may not be a union of  piecewise smooth curves.

We do not need here the generic hypothesis but we will require the twist condition to be defined later. However, for  a generic potential $A$ more things can be said.

The twist hypothesis is natural in problems on  Optimization (see \cite{Ba}) and in problems on Game Theory (see \cite{Souza}).
The twist property for a potential $A$ is presented in Definition \ref{tui} in Section \ref{OT}.

In the same spirit of \cite{LOS} the idea here is to use an auxiliary family of functions $W_w(x)=W(x,w)$ indexed by $w\in \{1,2,..,d\}^\mathbb{N}$ such that for each $w$ we have  $W_w: (0,1) \to \mathbb{R}$ is, at least $C^1$ (it is $C^2$ in the case we consider). This function  $W$ of the variable $(x,w)$ is called involution kernel. $ W_w$ is not necessarily periodic on $S^1$ (see pictures on section \ref{WE} where a certain $S$ replaces the above $W$).
A natural strategy would be to assume that $A$ satisfies a twist condition and to show that there exists a finite number of points $w_j$, $j=1,2...,k$, and a corresponding set of real values $\alpha_1,\alpha_2,...,\alpha_k$, such that, for each $x\in S^1$ we have
\begin{equation} \label{impo} b_\lambda(x) = \max_{j=1,2,..,k} \{ \alpha_j + W(x,w_j)\},
\end{equation}
where the graph of $b_\lambda$ is $\mathcal{B}_\lambda $.

In \cite{LOT} the results assume, among other things,  that an special point (the turning point) was  eventually periodic. Here we will just use the fact that $A$ satisfies a twist condition.
We will show that the conjecture is  true when  $A$ satisfies a twist condition (see Corollary \ref{twist finite optm} and comments after Corollary \ref{twist finite optm}  on section \ref{oop}). We point out that the twist condition is an open property in the $C^2$ topology. The main problem we analyze here could also be expressed in the $C^2$ topology.

Expressions of the kind (\ref{impo}) appear  in Ergodic Transport (see \cite{LOT} \cite{LOS} \cite{LM} \cite{LMMS2}).   Equation (\ref{bb}) just after Theorem \ref{mm} describes  relation (\ref{impo}) under certain general  hypothesis:  the Lipschitz setting (see section \ref{ET}).

 We apply all the previous results  to the case when $A$ is quadratic in section \ref{qua}.
The main problem we analyze here could also be expressed in the $C^2$ topology.

In section \ref{ET} we describe some general  properties related to Ergodic Transport for the setting we consider here.

\section{$\lambda$-calibrated subactions and $\lambda$-maximizing probabilities} \label{sec2}
\begin{definition}
Given a continuous function $A:S^1\to \mathbb{R}$ and $\lambda\in (0,1)$, we say that a continuous function
{\bf $b_\lambda$ is a $\lambda$-calibrated subaction for $A$}, if for all $x\in S^1$, $ b(x) = \displaystyle\max_{T(y)=x} \{ \lambda \, b(y) + A(y)\}.$
\end{definition}
\begin{figure}[h!]
  \centering
  \includegraphics[scale=0.4,angle=0]{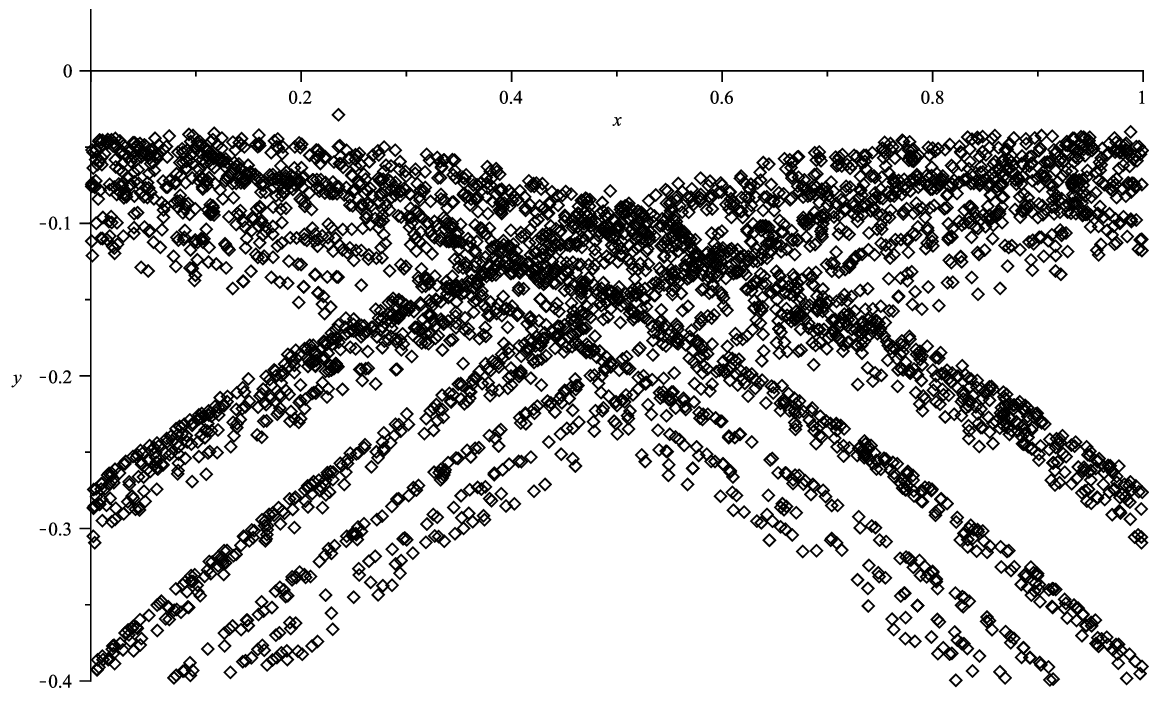}\\
  \caption{The  fat attractor for the case of $ A(x) = -\,(x-0.5)^2$, $\lambda=0.51,$ and $d=2$. The picture indicates that the upper boundary is piecewise smooth. It is the envelop of two smooth but  non periodic curves.}\label{figquadstandarditera}
\end{figure}
A similar concept can be consider when the dynamics is defined by the shift and not $T$ (see section \ref{ET}).

When $A$ is Lipschitz for each $\lambda\in (0,1)$ the function $b_\lambda$ above exist, is Lipschitz and it is unique (see \cite{Bousch-walters} and \cite{LMMS1}).
The  existence of such $b_\lambda$ when $A$ is Lipschitz is also presented in the survey paper \cite{TFM}.

About the interest in such family $b_\lambda$ we can say that
in Aubry-Mather theory and also in Optimization a similar kind of problem is considered in problems related to the so called
infinite horizon discounted Hamilton-Jacobi equation. It provides an alternative method for showing the existence of viscosity solutions (see \cite{It}, \cite{Go1} and \cite{G0}).
Thanks to the formal association with Optimization and Economics the analysis of such family $b_\lambda$, which takes in account values $\lambda\in(0,1)$, can be  called the discounted problem for the potential $A$. If $A$ is Lipschitz it is known (see \cite{BKRU}) that the upper boundary of the attractor is the graph of the Lipschitz $\lambda$-calibrated subaction  $b_\lambda:S^1 \to \mathbb{R}$.

The above result means that if the point $(x,b_\lambda (x))$ is in the upper boundary of the attractor, then, there is a point $y$ such that $T(y)=x$, and $F(y,b_\lambda(y))= (x,b_\lambda (x))$. In this way the analysis of the dynamics of $F$ on the boundary of the attractor is quite related to the understanding of $\lambda$-calibrated subactions.

Note that
if $b$ is  the $\lambda$-calibrated subaction for $A$, then, $ b + \frac{g}{\lambda}$ is the
$\lambda$-calibrated subaction for $A+ \frac{g \circ T}{\lambda} - g .$
In order to obtain our main result on the boundary of the attractor we have to investigate properties of $\lambda$-calibrated subactions.
The three keys elements on our reasoning are: probabilities with support in periodic orbits (see section \ref{sec2}),  a relation of the kind (\ref{impo}) for the function $b$ whose  graph is  the boundary of the attractor (see section \ref{OT}) and
the twist condition (see section \ref{oop}).

We will present now some general results on $\lambda$-calibrated subactions.
We denote by
$\tau_i$, $i=1,2,...,d$, the inverse branches of $T$. For each $i$ the transformation $\tau_i$ has domain $[i-1/d,i/d].$
Given $x$, if $i_0$ is such that
$b(x) = \lambda b( \tau_{i_0} (x))+  A( \tau_{i_0} (x))$, we say  $\tau_{i_0} (x)$ realizes
$b(x)$ (or, realizes $x$). We can also say that $i_0$ is a  symbol which realizes $b(x)$.
One can show that for $d=2$, for any Holder $A$,  there exist $x$ such that $b(x)$ has  two different $\tau_{i_0} (x)$ realizers. In this way realizers are not always unique.
For fixed $x_0\in S^1$ consider $x_1$ such that $b(x_0)= \lambda \, b(x_1) + A(x_1)$, and $T(x_1)=x_0$.
Then, there exist a realizer $i_0$ such that
$\tau_{i_0} (x_0)=x_1$.
Now take $x_2$ such that $b(x_1)= \lambda \, b(x_2) + A(x_2)$  and $T(x_2)=x_1$.
In the same way as before, there exist $i_1$ such that
$\tau_{i_1} (x_1)=x_2$.
In this way get by induction a sequence $x_k\in S^1$ such that $T(x_k) = x_{k-1}.$
This also defines an element $ a=a( x_0)= (i_0,i_1,...,i_n,..) \in \Sigma=\{1,...,d\}^\mathbb{N}$, where
$\tau_{i_k} (x_k)=x_{k+1}$. This $a$  depends of the choice of realizers we choose in the sequence of preimages. We say $(x_0, a(x_0))\in S^1 \times \{1,2,...,d\}^\mathbb{N}$ is an optimal pair. Note that for each $x_0\in S^1$ there exist at least one optimal pair. For each $x_0$ we consider a fixed choice $a(x_0)$, and, the corresponding sequence  $x_k\in S^1$, $k \in \mathbb{N}$.

Consider the probability $\mu_n= \sum_{j=0}^{n-1}\, \frac{1}{n} \,\delta_{x_n}$ and
$\mu_\lambda$ any weak limit  of a convergent subsequence $\mu_{n_k}$, $k \to \infty$.
The probability $\mu_\lambda$ on $S^1$ is $T$ invariant and satisfies
$$\int (b(T(x)) - \lambda\, b(x) -\,A(x))\, d \mu_\lambda=0.$$

Note that $b(T(z)) - \lambda b(z) - A(z)\,\geq 0$
for all $z \in S^1$.
In this way {\bf for $z$ in the support of $\mu_\lambda$} we get the {\bf $\lambda$-cohomological} equation
\begin{equation}  \label{tuto}
b(T(z)) - \lambda b(z) - A(z)\,= 0.
\end{equation}
Therefore, $\mu_\lambda$ is maximizing for the potential $A(z) - b(T(z)) + \lambda b(z).$
For $z$ in the support of $\mu_\lambda$ we have that $F(z, b(z))= (T(z), b(T(z)).$
Moreover, in this case
\begin{equation} \label{tt}
b(T(z)) = \max_{T(y)=T(z)} \{ \lambda \, b(y) + A(y)\}= \lambda \, b(z) + A(z).
\end{equation}
\begin{definition} Any probability which maximizes $A(z) - b(T(z)) + \lambda b(z)$ among $T$-invariant probabilities, where $b$ is the $\lambda$-calibrated subaction, will be called a $\lambda$-maximizing probability for $A$.
\end{definition}

Any $\mu_\lambda$ obtained from a point $x_0$ and a family of realizers described by a certain $a=a(x_0)$ as above is a $\lambda$-maximizing probability for $A$.
Note that $\mu_\lambda$ is not maximizing for $A$ but for the potential $A(z) - b(T(z)) + \lambda b(z)$. General references for maximizing probabilities are \cite{CLT}, \cite{Bousch0},   \cite{GL1}, \cite{J0}  and \cite{TFM}.
As we are maximizing among invariant probabilities we can also say that $\mu_\lambda$ is maximizing for the potential
$$A(z) + (\lambda-1) b(z)=  A(z) - b(T(z)) + \lambda b(z) + [b(T(z)) - b(z) ].$$

\begin{proposition} \label{puxa}  If $z$ is a point in a periodic orbit of period $k$ and moreover $z$ is in the support of the maximizing probability $\mu_\lambda$, then
the realizer $a$ can be taken as a periodic orbit of period $k$ for the shift $\sigma$ acting in the Bernoulli space. We call such probability invariant for the shift of  dual periodic probability.
\end{proposition}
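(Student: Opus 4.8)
The plan is to realize the dual periodic orbit by running the $T$-orbit of $z$ \emph{backwards}. The first step is to observe that the support $K=\mathrm{supp}(\mu_\lambda)$ is forward invariant under $T$: if $x\in K$ and $V$ is an open neighborhood of $T(x)$, then $T^{-1}(V)$ is an open neighborhood of $x$, so $\mu_\lambda(V)=\mu_\lambda(T^{-1}(V))>0$ by $T$-invariance, whence $T(x)\in K$. Since $z$ has period $k$, the whole orbit $\mathcal O=\{z,T(z),\dots,T^{k-1}(z)\}$ therefore lies in $K$, and consequently the $\lambda$-cohomological equation \eqref{tuto} holds at every point of $\mathcal O$; equivalently, $b(T^{j+1}(z))=\lambda\,b(T^{j}(z))+A(T^{j}(z))$ for every $j$.

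The second step is to choose, in the inductive construction of an optimal pair described just before the statement, the backward orbit $x_j:=T^{k-j}(z)$ for $j=0,1,\dots,k$ (so $x_0=x_k=z$), extended $k$-periodically by $x_{j+k}:=x_j$. This is a legitimate choice: on one hand $T(x_{j+1})=T\bigl(T^{k-j-1}(z)\bigr)=T^{k-j}(z)=x_j$ for $0\le j\le k-1$, so $(x_j)_{j\ge 0}$ is a backward orbit of $z$; on the other hand, writing $w=x_{j+1}=T^{k-j-1}(z)\in\mathcal O$, the realizer identity $b(x_j)=\lambda\,b(x_{j+1})+A(x_{j+1})$ is precisely \eqref{tuto} at $w$, which holds by the first step. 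Thus each $x_{j+1}$ is an admissible realizer of $x_j$.

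Finally, since $x_{j+1}$ is one of the $d$ preimages of $x_j$ under $T$, there is a unique symbol $i_j\in\{1,\dots,d\}$ with $\tau_{i_j}(x_j)=x_{j+1}$; taking the same symbol on each pass around the cycle gives $i_{j+k}=i_j$, hence the resulting realizer $a=(i_0,i_1,i_2,\dots)\in\Sigma$ satisfies $\sigma^{k}(a)=a$, and since the points $(x_j,\sigma^{j}a)$, $j=0,\dots,k-1$, are pairwise distinct (their first coordinates run through the $k$ distinct points of $\mathcal O$) the orbit of $(z,a)$ under the skew-product $(x,w)\mapsto(\tau_{w_0}(x),\sigma(w))$ has period exactly $k$. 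Therefore $(z,a)$ is an optimal pair whose realizer is $\sigma$-periodic, and the $\sigma$-invariant probability $\frac1k\sum_{j=0}^{k-1}\delta_{\sigma^{j}a}$ --- equivalently $\frac1k\sum_{j=0}^{k-1}\delta_{(x_j,\,\sigma^{j}a)}$ on $S^1\times\Sigma$, sitting over the equidistribution on $\mathcal O$ --- is the asserted dual periodic probability. I expect the only delicate point to be the first step, namely that \eqref{tuto} propagates from $z$ to the \emph{entire} orbit $\mathcal O$ via forward invariance of $\mathrm{supp}(\mu_\lambda)$; once that is in place the rest is bookkeeping, and, crucially, no uniqueness or transversality of realizers is needed here, only the existence of one periodic choice.
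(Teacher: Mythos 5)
Your proof is correct and follows essentially the same route as the paper's: both arguments propagate the $\lambda$-cohomological equation \eqref{tuto} to the whole periodic orbit (all of which lies in $\mathrm{supp}(\mu_\lambda)$) and then read off the periodic symbol sequence from the backward orbit inside the cycle. The only difference is cosmetic: you explicitly verify the forward invariance of $\mathrm{supp}(\mu_\lambda)$, which the paper asserts without proof, and you add the bookkeeping showing the exact period is $k$.
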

\begin{proof} In order to simplify the reasoning suppose $k=2$.
Note that $T(z)$ is also in the support of the maximizing probability $\mu_\lambda$.
In this case, from (\ref{tuto}) we have that $b(\,T(\, T(z)\,)\,) = \lambda \, b(T(z)) + A(T(z))$
because $T(z)$ is in the support of $\mu_\lambda$.
From equation (\ref{tt})
$$b(z)=b(\,T(\, T(z)\,)\,) = \max_{T(y)=T^2 (z)=z} \{ \lambda \, b(y) + A(y)\}= \lambda \, b(T(z)) + A(T(z)).$$
Therefore,
$\displaystyle  \max_{T(y)=z} \{ \lambda \, b(y) + A(y)\}  = \lambda \, b(T(z)) + A(T(z)).$
In this way we can take the corresponding inverse branch, say $a_1$, and then, say $a_2$,  and we repeat all the way this
choice again and again in order to define $a=(a_1,a_2,a_2,a_4,...)= (a_1,a_2,a_1,a_2,...)$.
In this case $a$ is an orbit of period  two for $\sigma$ and the claim is true.
In the case
$k=3$, note that if $T^3(z)=z$, we have that $b(\,T^2(\, T(z)\,)\,) = \lambda \, b(T^2(z)) + A(T^2(z))$
and $b(\,T(\, T(z)\,)\,) = \lambda \, b(T(z)) + A(T(z))$, because $T(z)$ and $T^2(z)$ are in the support of $\mu_\lambda$. In this way we follow a similar reasoning as before and we get an $a$ which is an orbit of period $3$ for the shift.
Same thing for a periodic orbit with a  general $k$.
\end{proof}

As an example of the above,
suppose $k=2$, then there are two periodic orbits of period 3. Take one of them, let us say $\{z_1,z_2,z_3\}.$ Suppose
$T(z_1)=z_2, T(z_2)=z_3$ and $T(z_3)=z_1$.
Given $z_1$ there exists $a_1$ such that $\tau_{a_1}(z_1)=z_3$. Given $z_3$ there exists $a_2$ such that $\tau_{a_2}(z_3)=z_2$.
Finally, given $z_2$ there exists $a_3$ such that $\tau_{a_3}(z_2)=z_1$. Then, in this case, $a=(a_1,a_2,a_3,a_1,a_2,a_3,a_1,..)$ is in the support of the dual periodic probability for $\{z_1,z_2,z_3\}.$ The set $\{a,\sigma(a), \sigma^2(a)\}$ defines a periodic orbit of period $3$ for the shift $\sigma$.

\begin{definition}
We denote by $R$ the function $R=A - b \circ T + \lambda b\geq 0$ and call it the rate function.
\end{definition}
For fixed $\lambda$, by the fiber contraction theorem \cite{Ro} (section 5.12 page 202 and section 11.1 page 433) we get that the $\lambda$-calibrated subaction  $b=b_{\lambda, A}= b_{A}$ is a continuous function of $A$ in the $C^0$ topology. Moreover, the function $b=b_{\lambda, A}$ is a continuous function of $A$ and $\lambda$.
\medskip

Taking $\lambda \to 1$ will see now that we will get results which are useful for classical Ergodic Optimization.

We denote
$m(A)= \sup\{ \int A \, d \,\nu,$ among $T$-invariant probabilities$\, \nu\,\}$.

We call maximizing probability for $A$ any $\rho$ which attains the supremum $m(A)$. We denote any of these $\rho$ by $\mu_A$.
A continuous function $U: S^1 \to \mathbb{R} $ is called a
calibrated subaction for $A:S^1\to \mathbb{R}$, if, for any $y\in S^1$, we have
\begin{equation}\label{c} U(y)=\max_{T(x)=y} [A(x)+ U(x)-m(A)].
\end{equation}
If $b^\&_\lambda = b_\lambda - \max \, b_\lambda$, then, of course, $\mu_\lambda$ is maximizing for the  rate function  potential $A(x) - b^\&_\lambda(T(z)) + \lambda b^\&_\lambda(z).$
It is known (see  \cite{Bousch-walters} \cite{TFM} \cite{BCLMS} and \cite{LMMS1}) that $b_\lambda$ is equicontinuous in $\lambda$, and, any convergent subsequence, $\lambda_n\to 1$,  satisfies $b^\&_{\lambda_n}\to U$, where $U$ is a calibrated subaction for $A$.

Assuming the maximizing probability for $A$ {\bf is unique} (a generic property according to \cite{CLT}), it is known (see  \cite{GL1} section 4), that when $\lambda\to 1$,  we get that $b^\&_\lambda \to U$, where $U$ is a (the) calibrated subaction for $A$. In  this way we can say that the family $b^\&_\lambda \to U$ selects the calibrated subaction $U$ via the discounted method.
Assuming that the maximizing probability $\mu_A$  is unique, when $\lambda \to 1$, we get that $\mathcal{B}_\lambda$ (after the subtraction of $\max b_\lambda$)  converges to the graph of the calibrated subaction for $A$ in the $C^0$-topology.

 Even if the maximizing probability for $A$  is not unique there exist anyway a unique special limit subaction when $\lambda \to 1$ (see \cite{ILM}). That is, there exist a selection on the discounted method for any Holder potential $A$ (the potential do not have to be generic). In other words, given the potential $A$,  the limit of any sequence $b^\&_{\lambda_n}$,  $n \to \infty$, $\lambda_n \to 1$,  will be
a unique special subaction $U$ for $A$ (independent of the sequence).
A similar property, of course, ia also true for the boundary $\mathcal{B}_\lambda$, when $\lambda \to 1$.

Note that in any case  it is true the relation: for any $z$
$$ b^\&(T(z)) - \lambda b^\&(z) + ( 1- \lambda) (\max b_\lambda)- A(z)\,\geq 0.$$

We point out that in classical Ergodic Optimization, given a Lipschitz potential $A:S^1 \to \mathbb{R}$,  in order to obtain  examples where one can determine explicitly the maximizing probability or a calibrated subaction, it is necessary to know the exact value the maximal value $m(A)$ (see equation (\ref{c})). In the general case this is not an easy task and therefore any method of approximation of this maximal value or associated subaction is helpful. The discounted method provides approximations $b_\lambda$, $\lambda \in (0,1)$, in the $C^0$-topology, of calibrated subactions for $A$ via the Banach fixed point theorem, that is, via a contraction in the set of continuous functions in the $C^0$-topology. You take any function, iterate several times by the contraction and you will get a function $\hat{b}_\lambda$ which is very close to the $\lambda$-calibrated subaction. If $\lambda$ is close to $1$, then, the corresponding $\hat{b}_\lambda^{\&}$ is close to a classical calibrated subaction $U$. In all this is not necessary to know the value $m(A)$. However, when $\lambda$ becomes close to the value $1$ this contraction becomes weaker an weaker.

Theorem \ref{main} claims that for a generic potential $A$ the maximizing measure for $A$ is attained by a $\lambda$-maximizing probability for $\lambda$ in an interval of the form
$[1-\epsilon,1]$.   Thanks to all that one can apply our reasoning for a $\lambda$ is fixed and close to $1$. In the discounted  method taking $\lambda\sim 1$ the procedure also provides a way to approximate the value $m(A)$ as we will see later.

It is known (see \cite{TFM}) that $( 1- \lambda) (\max b_\lambda)\to m(A)$. Then, as we said before, one can get an approximate value of $m(A)$ by the discounted method.

It is easy to see that close by the periodic points the graph of $b$ is a piece of unstable manifold for $F$ (see figure \ref{figstablemanif}).
We point out that for  a generic Lipschitz potential $A$ the maximizing probability for $A$ is a unique periodic orbit (see \cite{CO}).

Now we state a result which is new in the literature.
\begin{figure}[h!]
  \centering
  \includegraphics[scale=0.4,angle=0]{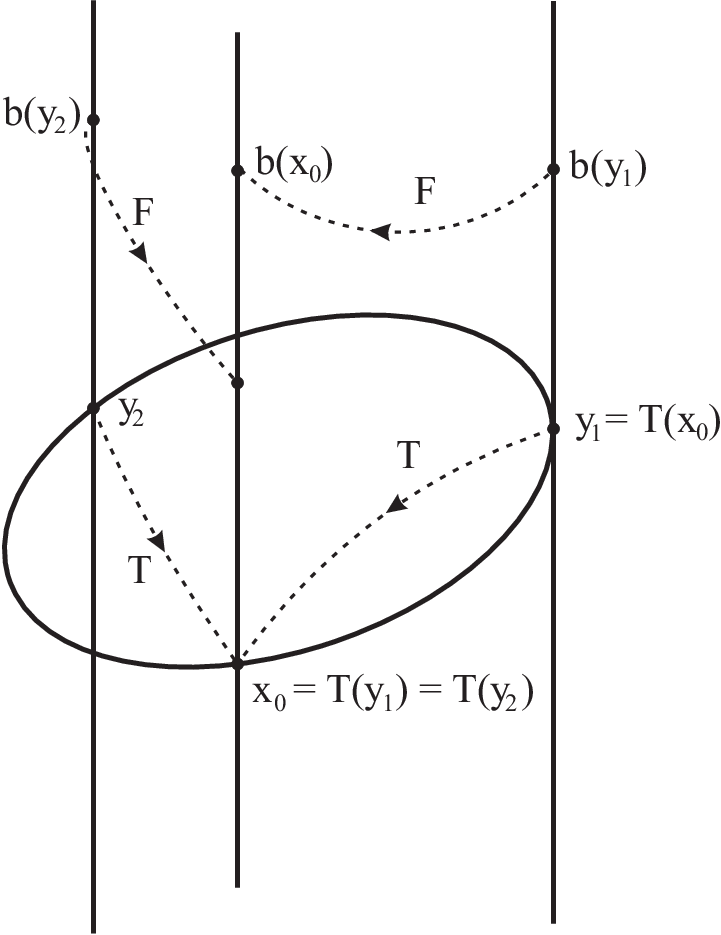}\\
  \caption{A geometric picture of the $\lambda$-calibrated property of $b$. The graph of $b$ describes the upper boundary of the attractor.}\label{figcalibuppbound}
\end{figure}
\begin{figure}[h!]
  \centering
  \includegraphics[scale=0.4,angle=0]{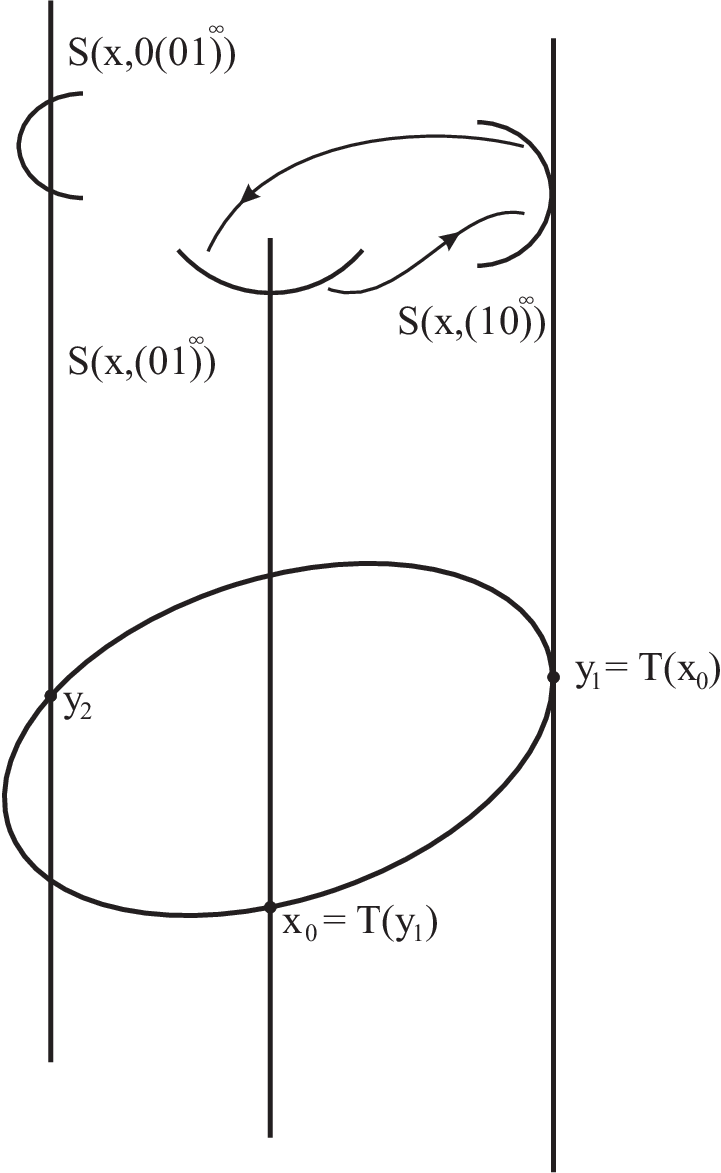}\\
  \caption{The unstable manifolds of a point of period two for $F$.}\label{figstablemanif}
\end{figure}
\begin{theorem} \label{main} If $\nu$ is a weak limit of a converging subsequence  $\mu_{\lambda_n}\to \nu$, $\lambda_n\to 1$, then, $\nu$ is a maximizing probability for $A$. For a generic Lipschitz potential $A$ there exist an $\epsilon$, such that for all $\lambda\in (1-\epsilon,1],$ the  $\lambda$-maximizing probability has support in the  periodic orbit which defines the maximizing probability for $A$. If the potential $A$ is of class $C^1$ the same is true on the $C^1$ topology.
\end{theorem}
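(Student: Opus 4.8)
The plan is to prove the three assertions in turn; the first is soft, and the other two reduce, via genericity results from ergodic optimization, to a comparison estimate for the discounted problem. For the first assertion I start from the fact that the rate function vanishes $\mu_\lambda$-almost everywhere, which by $T$-invariance of $\mu_\lambda$ reads $\int (A+(\lambda-1)b_\lambda)\,d\mu_\lambda=0$, so
$$\int A\,d\mu_\lambda=(1-\lambda)\int b_\lambda\,d\mu_\lambda=(1-\lambda)\int b^\&_\lambda\,d\mu_\lambda+(1-\lambda)\max b_\lambda .$$
Since $b^\&_\lambda$ is equicontinuous in $\lambda$ and normalized by $\max b^\&_\lambda=0$, this family is uniformly bounded, so the first term tends to $0$; together with $(1-\lambda)\max b_\lambda\to m(A)$ (recalled above, see \cite{TFM}) this gives $\int A\,d\mu_{\lambda_n}\to m(A)$, and passing to the weak limit, with $\nu$ still $T$-invariant, yields $\int A\,d\nu=m(A)$: $\nu$ is maximizing. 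Moreover, since $\mu_\lambda$ maximizes $\nu\mapsto\int(A+(\lambda-1)b_\lambda)\,d\nu$ with maximal value $0$, testing with the maximizing measure $\mu_{\mathcal O}$ of $A$ and using the previous display gives the basic inequality
$$ m(A)-\int A\,d\mu_\lambda\ \le\ (1-\lambda)\Big(\int b^\&_\lambda\,d\mu_{\mathcal O}-\int b^\&_\lambda\,d\mu_\lambda\Big);$$
once $\mu_{\mathcal O}$ is assumed unique the first part forces $\mu_\lambda\to\mu_{\mathcal O}$ weakly, so the right-hand side is $o(1-\lambda)$.

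For the second assertion I use, for a generic Lipschitz $A$, three facts contained in or adjacent to \cite{CO}: the maximizing measure is unique and carried by a single periodic orbit $\mathcal O=\{q_0,\dots,q_{p-1}\}$, $T(q_i)=q_{i+1}$; there is a Lipschitz subaction $U_0$ for $A$ with $g:=m(A)-A-U_0+U_0\circ T\ge 0$ vanishing exactly on $\mathcal O$ (this holds with $\mathcal O$ replaced by the Aubry set, which generically equals $\mathcal O$); and $\mathcal O$ satisfies the Yuan--Hunt lock-in property, giving a neighborhood $N\supset\mathcal O$, which I take below the expansivity scale of $T$ so that $\overline N$ contains no $T$-invariant set but $\mathcal O$, and a constant $c>0$, independent of the excursion, such that every orbit segment leaving $N$ and returning accumulates at least $c$ for $\int g$. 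It then suffices to show, for $\lambda$ close to $1$: (I) $\mathcal O\subset\{R=0\}$, so $\mu_{\mathcal O}$ is itself a $\lambda$-maximizing probability; and (II) it is the only one. Granting these, any $\lambda$-maximizing probability has all of its ergodic components $\lambda$-maximizing, hence each equal to $\mu_{\mathcal O}$, so the probability is $\mu_{\mathcal O}$ and is supported on $\mathcal O$.

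To prove (I) and (II) I use the backward representation $b_\lambda(x)=\tfrac{m(A)}{1-\lambda}+\sup\sum_{j\ge1}\lambda^{j-1}\big(A(x_j)-m(A)\big)$, the supremum over backward $T$-orbits $(x_j)$ from $x$. Writing $A(x_j)-m(A)=-g(x_j)+U_0(x_j)-U_0(x_{j-1})$ with $x_0=x$ and summing against $\lambda^{j-1}$, the $U_0$-part telescopes to $(1-\lambda)\sum_j\lambda^{j-1}U_0(x_j)-U_0(x)$ while the $g$-part is $\le 0$ and vanishes identically along the backward orbit that cycles through $\mathcal O$. Now any competing backward orbit from $q_i$ that is not this $\mathcal O$-cycle must leave $\mathcal O$ at a first step, and since the $d$ preimages of a point are $1/d$-separated and $T$ is globally $d$-Lipschitz on $S^1$, this first deviation is macroscopic — the deviating point lies at a definite distance from $\mathcal O$, hence where $g\ge\delta_1>0$ — so its weighted $g$-contribution is bounded below in absolute value by a $\lambda$-independent amount, whereas the difference of $U_0$-sums against the $\mathcal O$-cycle is only of order $1-\lambda$ when the competitor afterwards shadows $\mathcal O$ (and the $g$-part is overwhelmingly negative otherwise, the competitor's empirical measure then not being maximizing). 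Hence for $\lambda$ close to $1$ the $\mathcal O$-cycle strictly wins, $b_\lambda(q_i)=\tfrac1{1-\lambda^{p}}\sum_{k=0}^{p-1}\lambda^{p-1-k}A(q_{i+k})$, and one reads off $R\equiv 0$ on $\mathcal O$, which is (I). For (II), if $\mu_\lambda\ne\mu_{\mathcal O}$ is ergodic and $\lambda$-maximizing, the basic inequality gives $\int g\,d\mu_\lambda=o(1-\lambda)$, so for $\lambda$ near $1$ its support cannot lie in $\overline N$ (it would otherwise be $\mathcal O$); hence the support of $\mu_\lambda$ is a transitive compact invariant subset of $\{R=0\}$ straddling $\partial N$, a.e.\ orbit in it makes excursions, and feeding the resulting lower bound $\int g\,d\mu_\lambda\ge (c/2)\cdot(\text{excursion rate})>0$ back through the above representation of $b_\lambda$ along a realizing backward orbit contradicts $\int g\,d\mu_\lambda=o(1-\lambda)$ once $\lambda$ is close enough to $1$.

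I expect step (II) to be the main obstacle, and within it the passage from ``almost all the mass near $\mathcal O$'' to ``the measure equals $\mu_{\mathcal O}$'': a measure can shadow $\mathcal O$ for arbitrarily long stretches and so carry an arbitrarily small deficit $\int g$, so the excursion cost must be compared not with the full loss of maximality but with the perturbation $(\lambda-1)b^\&_\lambda$, which is only of size $1-\lambda$; this is precisely where the \emph{uniform} lock-in constant, rather than mere uniqueness of the maximizing orbit, is indispensable and produces the uniform $\epsilon$. The third assertion follows from the same argument with the Lipschitz-genericity inputs replaced by their $C^1$ counterparts (again \cite{CO}); differentiability of $A$ is not used beyond the uniform Lipschitz control of $A$ and of the family $b_\lambda$ that the estimates require.
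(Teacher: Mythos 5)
Your handling of the first assertion is correct, and is in fact a cleaner route than the paper's: the paper argues by contradiction after passing to a uniform limit $b^{\&}_{\lambda_n}\to U$, whereas you get $\int A\,d\mu_{\lambda_n}\to m(A)$ directly from the vanishing of the rate function, equiboundedness of $b^{\&}_\lambda$, and $(1-\lambda)\max b_\lambda\to m(A)$. For the second assertion, however, the paper's proof is short by design: it notes that $\mu_\lambda$ maximizes $A+(\lambda-1)b_\lambda$, which up to a constant differs from $A$ by $(\lambda-1)b^{\&}_\lambda$, of size $O(1-\lambda)$ in both sup and Lipschitz norms, and then simply invokes the continuously-varying-support (lock-in) theorem of \cite{CLT} for Lipschitz perturbations of a potential whose maximizing measure is a unique periodic orbit. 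You instead try to re-derive that lock-in from the Yuan--Hunt excursion property. The skeleton (reduce to (I) $\mathcal O\subset\{R=0\}$ and (II) uniqueness) is right, but the two decisive estimates are asserted rather than proved, and one of them is wrong as stated.

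Concretely: in (I), the claim that the first deviation's ``weighted $g$-contribution is bounded below \ldots by a $\lambda$-independent amount'' is false, since the weight $\lambda^{j_0-1}$ at the first deviation time $j_0$ can be arbitrarily small. One must factor $\lambda^{j_0-1}$ out of both sides (the orbits agree before $j_0$), after which the inequality to prove is
$$\sum_{k\ge0}\lambda^{k}g(x_{j_0+k})\ \ge\ (1-\lambda)\sum_{k\ge0}\lambda^{k}\bigl(U_0(\bar x_{j_0+k})-U_0(x_{j_0+k})\bigr),$$
whose left side is $\ge\delta_1$ but whose right side is a priori only $O(\|U_0\|_\infty)$, not $O(1-\lambda)$ --- for instance when the competitor shadows $\mathcal O$ out of phase. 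Closing this needs the phase-independent averaging $(1-\lambda)\sum_k\lambda^kU_0(q_{s-k})\to\frac1p\sum_{\mathcal O}U_0$ together with a uniform dichotomy (shadow $\mathcal O$ in some phase, or accumulate $g$ at a definite rate) that you do not establish. In (II) the same difficulty recurs: $\int g\,d\mu_\lambda\ge c\cdot(\text{excursion rate})$ does not contradict $\int g\,d\mu_\lambda=o(1-\lambda)$, because the excursion rate of a competing ergodic measure can itself be arbitrarily small (a long periodic orbit with one excursion has rate $1/n$). What is required is a bound of the form $\int\phi\,d\mu_\lambda-\int\phi\,d\mu_{\mathcal O}\le C\,\mathrm{Lip}(\phi)\cdot(\text{excursion rate})$ for $\phi=(\lambda-1)b^{\&}_\lambda$, so that both sides of your basic inequality scale with the excursion rate and the comparison becomes $c$ versus $O(1-\lambda)$. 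You correctly flag this as the crux in your final paragraph, but leave it unexecuted; since that comparison is exactly the content of the lock-in lemma of \cite{CLT} that the paper cites, your argument as written has not replaced the citation but restated the need for it.
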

\begin{proof}
Consider a subsequence $\mu_{\lambda_n} \to \nu$,  $\lambda_n\to 1$. Such $\nu$ is clearly invariant. Suppose by contradiction that for some $\epsilon>0$ there exists an invariant $\mu$ such that
$ \int (A - U \circ T + U)\, d \nu +\epsilon<
\int ( A - U \circ T + U) d \mu,$
then, for any $n$ large enough we have $\displaystyle \int (A - b_{\lambda_n}  \circ T + \lambda_n b_{\lambda_n} )\, d \mu_{\lambda_n}+\epsilon/2 <
\int ( A - b_{\lambda_n} \circ T + \lambda_n  b_{\lambda_n})d \mu,$
and, we reach a contradiction.

Now, for a generic potential it is known that the maximizing probability for $A$ is a unique periodic orbit (see \cite{CO}). Therefore,
$\mu_{\lambda} \to \nu$, when  $\lambda\to 1$.
From the continuous varying support (see \cite{CLT}) if $\mu_{\lambda} \to \nu$ and $\nu$ is periodic orbit, then, there exist an $\epsilon>0$ such that for $\lambda\in(1-\epsilon,1]$ the probability $\mu_\lambda=\nu$.
If the potential $A$ is of class $C^1$ one can do the following: since $\mu_\lambda$ is maximizing for  $A+ (1-\lambda b)$, which is Lipschitz-close to $A$, then, when $\lambda$ is close  to the value $1$, we apply the continuous varying support property in order to  get   a Lipschitz subaction and perturb in the same way as in \cite{CO}  in order to get an approximation  by another Lipschitz potential with support in a unique periodic orbit. This potential can be approximated once more in the $C^1$ topology and again in his way by the continuous varying support we get $C^1$ potential with support in a periodic orbit. Then, the same formalism as above can be applied.
\end{proof}
If $\mu_\lambda\to \mu_A$, when $\lambda\to 1$, we say that $\mu_\lambda$ selects the maximizing probability $\mu_A$. In the present case for a generic $A$ there is a selection via the discounted method.
\begin{remark}
We point out the final conclusion: for a generic
$A$ we have that for $\lambda$ close to $1$ the maximizing probability  $\mu_\lambda$ is  a periodic orbit. Moreover,  by Proposition~\ref{puxa}
the realizer $a$ for a point $x$ in the support of $\mu_\lambda$ can be taken as a periodic orbit (with the same period) for the shift $\sigma$.
\end{remark}
An interesting example is $A(x)= - (x-0.5)^2$ and $T(x)= 2\, x $ (mod 1), which has a unique
maximizing probability $\mu_A$  which is the one with support in the periodic orbit of period $2$ according to Corollary 1.11 in \cite{J1} (see  also\cite{J2}). Therefore, the
corresponding $\lambda$-maximizing probability $\mu_\lambda$ converges to this one. In fact, there is an $\epsilon$ such that if $1-\epsilon \,<\,\lambda<1 $, then $\mu_\lambda$ has support in this periodic orbit. This example will be carefully analyzed in the lasts sections of the paper.
\section{The $\lambda$-calibrated subaction as an envelope} \label{OT}
Consider (as M. Tsujii in expression (3) page 1014 \cite{T1}) the function $S: (S^1, \Sigma) \to \mathbb{R}$, where $ \Sigma = \{1,2,..,d\}^\mathbb{N}$, given by
\begin{equation}\label{valuefunction}
  S_{\lambda,A} (x,a)=S(x,a) = \sum_{k=0} \lambda^k A(\tau_{k,a}x \,) ,
\end{equation}
where $(\tau_{a_{k-1}}\circ\,...\, \circ\tau_{a_0})\, (x) = \tau_{k,a}x$ and $a=(a_0,a_1,a_2,...).$
For a fixed $a$ the function $S_{\lambda,A} (.,a)$ is continuous up to the point $0$ (see several computer simulations is section \ref{W} and the explicit expression for the quadratic case in subsection \ref{morep}).
Note that if $A$ is of class $C^2$, then for a fixed $a$ the function $S(. ,a)$ is smooth up to the point $0$ in $S^1$, if $1>\lambda>\frac{1}{d}$ (see in page 1014
  the claim between expressions (3) and (4) in \cite{T1}).

We point out that the upper boundary of the attractor is periodic but each individual $S(x,a)$ as a function of $x$  is not (see worked examples in the end of the paper).
  Note also that for $\lambda$ and $a$  fixed the function  $S_{\lambda, A}(. ,a)$ is linear in $A$.
All $x$ has a corresponding $a=a(x)$ such that $b(x) = S(x,a).$ Indeed, for the given  $x$ take
$i_0$ such that
$ b(x) = \lambda b (\tau_{i_0} (x)) + A( \tau_{i_0} (x)),$
then, take
$i_1$ such that
$ b( \tau_{i_0} (x)) = \lambda b ((\tau_{i_1}\circ \tau_{i_0}) (x)) + A((\tau_{i_1}\circ \tau_{i_0} (x)),$
and so on. In this way we get $a=(i_0,i_1,i_2,..).$ This $a$ is not necessarily unique.
We call any such possible $a(x)$ a {\bf realizer for $x$}.
Note that
$$ b(x) = \lambda\,[\,\lambda u ((\tau_{i_1}\circ \tau_{i_0}) (x)) + A((\tau_{i_1}\circ \tau_{i_0} (x)) \,] + A( \tau_{i_0} (x))=$$
$$ \,\lambda^2 u ((\tau_{i_1}\circ \tau_{i_0}) (x)) + \lambda \,A((\tau_{i_1}\circ \tau_{i_0} (x)) \, + A( \tau_{i_0} (x))=$$
$$ \,\lambda^n u (( \tau_{i_{n-1}}\circ\,...\,\circ\tau_{i_1}\circ \tau_{i_0}) (x)) +$$
$$\lambda^{n-1}  \,A (( \tau_{i_{n-1}}\circ\,...\,\circ\tau_{i_1}\circ \tau_{i_0}) (x))+  ...+ \lambda \,\, A((\tau_{i_1}\circ \tau_{i_0} (x)) \, + A( \tau_{i_0} (x)).$$
Taking the limit when $n\to \infty$ we get $b(x) = S(x,a).$

From the construction we claim  that for any other $c \in \{1,2,..,d\}^\mathbb{N}$ we have
$b(x) \geq S(x,c).$ Indeed, consider
$z(x) = \limsup_{n \in \mathbb{N}}  \{
\lambda^{n-1} \,A (( \tau_{i_{n-1}}\circ\,...\,
\circ\tau_{i_1}\circ \tau_{i_0}) (x))+ ...$
 $...+ \lambda \,\, A((\tau_{i_1}\circ \tau_{i_0} (x)) \, + A( \tau_{i_0} (x))\,|\, (i_0,i_1,...,i_{n-1}) \in \{1,2,..,d\}^n \},$
and, the operator
$\displaystyle\mathcal{L}_\lambda (v)(x) = \sup_{i=1,2...,d} [A(\tau_i(x)) + \lambda v(\tau_i(x))].$
Then, $\mathcal{L}_\lambda  (z)=z.$ It is known that $b$ is a fixed point for $\mathcal{L}_\lambda $ (see section 3 in \cite{TFM}, or section 2 in \cite{BCLMS}). From the uniqueness of the fixed point it follows the claim. Therefore, we get from above the following result which we call the Envelope Theorem.
\begin{theorem}\label{envthm}
$b(x)=\displaystyle b_{\lambda,A}(x)= \sup_{ c \in \{1,2,..,d\}^\mathbb{N}} \, S(x,c)=S(x,a(x)),$
where $a(x)$ is a realizer for $x$.
\end{theorem}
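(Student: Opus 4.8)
The statement packages two facts — that every realizer $a(x)$ attains the supremum, $b(x)=S(x,a(x))$, and that the supremum of $S(x,\cdot)$ equals $b(x)$ — and the plan is to establish each in turn, making precise the computation sketched just before the statement.

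\emph{Step 1: a realizer attains the value.} Fix $x\in S^1$ and a realizer $a(x)=(i_0,i_1,\dots)$, obtained by choosing at each step an inverse branch along which the calibration relation becomes an equality. Iterating this relation $n$ times one gets
\[
b(x)=\lambda^{n}\,b\big((\tau_{i_{n-1}}\circ\cdots\circ\tau_{i_0})(x)\big)+\sum_{k=0}^{n-1}\lambda^{k}\,A\big((\tau_{i_{k}}\circ\cdots\circ\tau_{i_0})(x)\big).
\]
Since $b$ is continuous on the compact set $S^1$ it is bounded, and $0<\lambda<1$, so the first term tends to $0$ and the series in the second converges (its tail is bounded by $\lambda^{n}\|A\|_{\infty}/(1-\lambda)$). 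Letting $n\to\infty$ gives $b(x)=S(x,a(x))$, and in particular $b(x)\le\sup_{c}S(x,c)$.

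\emph{Step 2: the supremum equals $b$.} Put $w(x):=\sup_{c\in\{1,\dots,d\}^{\mathbb{N}}}S(x,c)$; since $|S(x,c)|\le\|A\|_{\infty}/(1-\lambda)$ for every $c$, $w$ is a well-defined bounded function on $S^1$. The crucial observation is that prepending a symbol $i$ to a sequence $c$ just shifts the series defining $S$: for every inverse branch $\tau_i$ and every $c$,
\[
A(\tau_i(x))+\lambda\,S(\tau_i(x),c)=S\big(x,(i,c)\big).
\]
Taking the supremum over $c$, then over $i$, and noting that every sequence in $\{1,\dots,d\}^{\mathbb{N}}$ has the form $(i,c)$, we obtain $\mathcal{L}_\lambda(w)=w$, where $\mathcal{L}_\lambda$ is the operator introduced above. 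Since $\mathcal{L}_\lambda$ is a $\lambda$-contraction on the Banach space of bounded functions on $S^1$ with the supremum norm, its fixed point is unique; as recalled in the text $b$ is a fixed point of $\mathcal{L}_\lambda$, so $w=b$. Combining with Step 1, $b(x)=S(x,a(x))\le w(x)=b(x)$, so equality holds throughout, which is the claim.

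\emph{Main point to be careful about.} The reasoning is short, and the one delicate ingredient is the point $0\in S^1$ (equivalently, the endpoints of the domains $[(i-1)/d,i/d]$ of the branches $\tau_i$), where compositions of inverse branches may land on a boundary and where the individual functions $S(\cdot,c)$ are only continuous up to that point. This causes no real difficulty, because the whole argument lives in the space of bounded functions and uses only the $\lambda$-contraction property and uniqueness of the fixed point of $\mathcal{L}_\lambda$ — never the continuity of $S(\cdot,c)$; one only needs to check that the concatenation identity above is written consistently with the exact convention for $\tau_{k,a}x$ in (\ref{valuefunction}), which is routine index bookkeeping.
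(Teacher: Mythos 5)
Your proof is correct and takes essentially the same route as the paper: Step 1 is the iteration/telescoping computation the paper carries out just before the theorem statement, and Step 2 is the paper's argument that the upper envelope is a fixed point of $\mathcal{L}_\lambda$ and therefore coincides with $b$ by uniqueness of the fixed point. The only cosmetic difference is that you define the candidate fixed point directly as $\sup_{c} S(x,c)$ and verify invariance via the concatenation identity, whereas the paper writes it as a $\limsup$ of suprema of partial sums and asserts the invariance; your version is a slightly more careful rendering of the same argument.
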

As the supremum of convex functions is convex we get that for $\lambda$ fixed  the function $b_{\lambda,A}$
varies in  convex way with $A$.

Figure~\ref{figquadstandarditera} suggests that $b(x)$ is obtained as $\sup \{ S(x,w_1), S(x,w_2)\}$, where, $w_1,w_2$, is  in  $\Sigma$.
Later we will show that in several interesting examples  $w_1,w_2$ are in a periodic orbit of period $2$ for the shift $\sigma$.
 As we said before in the introduction we point out again here (in a more precise way)  that Figure 1 in \cite{T1} suggests that $b(x)$ is obtained as $\sup \{ S(x,w_i), i=1,2,3,4\}$, where, $w_i$, $i=1,2,3,4$, are in  $\Sigma$. Note that the potential $A$ in that case is conjectured to have a maximizing probability in an orbit of period $4$ (see \cite{CoG}).

Note that if $A$ is of class $C^2$, then, $S_a:(0,1) \to \mathbb{R}$ is of class $C^2$.
We define $\pi(x)=i$, if $x$ is in the image of
$\tau_i(S^1- \{0,1\})$, $i\in \{1,2,...,d\}$.

Note also (see (7) page 1014 in \cite{T1}) that
$S(T(x), \pi(x) a)= A(x) + \lambda\, S(x,a).$
Or, in another way, for any $a=(a_0,a_1,...)$ we have that
$S(x,a)= A(\tau_{a_0}(x)) + \lambda S(\tau_{a_0}(x), \sigma(a)). $
This also means that  $\phi(x,a)=(x,S(x,a))$ is a change of coordinates from $F$ to $\mathbb{T}(x,a)= (T(x), \pi(x) a)$ \cite{T1}.
$\mathbb{T}(x,a)$ is forward invariant in the upper boundary $\mathcal{B}$ of the attractor.
Note also that $\mathbb{T}^{-1} (x,a)= (\tau_{a_0}(x), \sigma(a))$ (when defined).
\begin{definition}
Consider a fixed $\overline{x}\in S^1$  and variable $x\in S^1$, $a\in\{1,2\}^\mathbb{N}$, then we
define
\begin{equation} \label{W}
W(x,a)= S(x,a) - S(\overline{x},a).
\end{equation}
We call such $W$ the $\lambda$-involution kernel for $A$.
\end{definition}
Note  that for $a$ fixed $W(x,a)$ is smooth on $x\in(0,1).$
From the above definition we get,
$$ \lambda W(\tau_{a_0} (x), \sigma(a)) - W(x,a)=$$
$$ [\lambda S(\tau_{a_0} (x), \sigma(a))- \lambda S(\overline{x}, \sigma(a))]-
  [S(x, a)-  S(\overline{x}, a)]=$$
$$ [\lambda S(\tau_{a_0} (x), \sigma(a))- S(x, a)]-
[\lambda S( \overline{x}, \sigma(a))- S(\overline{x}, a)]=$$
$$-  A(\tau_{a_0}(x))+   [\lambda S(\overline{x}, \sigma(a))- S(\overline{x}, a)]. $$
Note that  $[\lambda S(\overline{x}, \sigma(a))- S(\overline{x}, a)]$ just depend on $a$ (not on $x$).
\begin{definition}
If we denote
$A^* (a) = [\lambda S(\overline{x}, \sigma(a))- S(\overline{x}, a)],$
we get the {\bf $\lambda$-coboundary equation}: for any $(x,a)$
$$A^* (a) = A(\tau_{a_0}(x))+ [\lambda W(\tau_{a_0} (x), \sigma(a)) - W(x,a)].$$
We say that $A^*$ is the $\lambda$-dual potential of $A$.
\end{definition}
Note that $A$ is defined for the variable $x\in S^1$ and $A^*$ is defined for $a$ which is in the dual space $\Sigma$.
The above definition  is similar to the one presented in \cite{BLT}, \cite{LOT}, \cite{CLO} and \cite{LMMS1}.
Note that we have an explicit expression for the $\lambda$-involution kernel $W$ which appears in the above definition.

Below we consider the lexicographic order in $\{1,...,d\}^\mathbb{N}$.
\begin{definition} \label{tui} We say that $A$ satisfies the the twist condition, if an (then, any) associated involution kernel $W$, satisfies the property: for any $a<b$, we have
$$\frac{\partial W}{\partial x}(x,a) - \frac{\partial W}{\partial x}(x,b)>0.$$
\end{definition}

Note that this condition does not imply that there is a uniform positive lower bound for $\frac{\partial W}{\partial x}(x,a) - \frac{\partial W}{\partial x}(x,b)$ when $b>a$.

{\bf
It is equivalent to state the above relation for $S$ or for $W$.}
\begin{figure}[h!]
  \centering
  \includegraphics[scale=0.4]{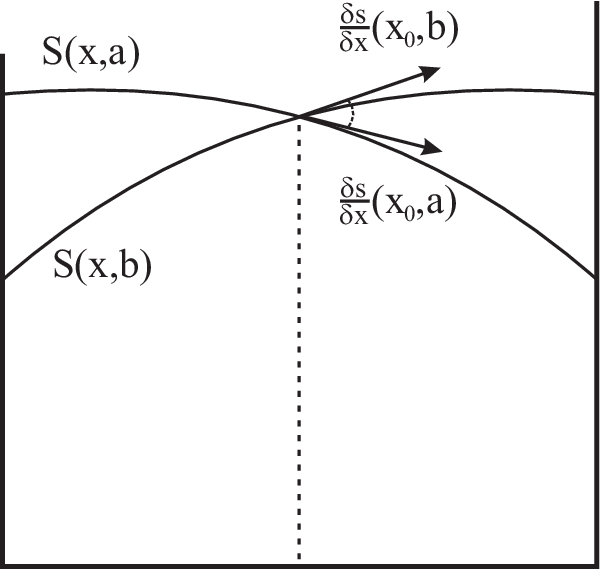}\\
  \caption{Under the twist condition the way the two graphs cut is compatible with the inequality $b<a$ (see Proposition \ref{ooi}).}\label{figtwist}
\end{figure}
An important issue is described by Proposition 2.1 in \cite{CLO} which basically says that (in our context) in the case $W$ satisfies the twist property, then
association $x$ to a realizer $a(x)$ is monotone, where we use the lexicographic order in $\{1,...,d\}^\mathbb{N}$. See also Proposition \ref{titi} in the last section. This is not exactly, but very close, of saying that the
support of the optimal plan probability for $W$ is a graph. In \cite{LOT} the question about the property of cyclically monotonicity (in the support)  is addressed.
\section{Geometry, combinatorics of the graphs of $S(. ,a)$ and the twist condition} \label{oop}
Remember that
$S(x,a) = \sum_{k=0} \lambda^k A( \tau_{k,a}x) ,$
and
$W(x,a)= S(x,a) - S(\overline{x},a).$
Moreover, one can get the calibrated subaction via the \textbf{superior envelope}
$ b(x)= b_{\lambda,A}(x)= \sup_{ a \in \Sigma} \, S(x,a)=S(x,a(x)).$
In this case $a(x) \in \Sigma$ is called an optimal symbolic element for $x$ (possibly not unique).
Under the twist condition, two graphs cut one each other in a   compatible way with the inequality $a<b$.
The envelope result, assures that, if the family $S(x,\cdot)$ is continuous in $\Sigma$, then $\frac{\partial b_{\lambda,A}}{\partial x}(x,a)= \frac{\partial S}{\partial x}(x,a)$, for every optimal $a$. Thus, if $A$ is twist the optimal symbolic element is unique in every differentiable point of $b(x)=b_{\lambda,A}(x)$. {\bf The two graphs on the Figure~\ref{figtwist} can not cut twice by the twist property. This is the purpose of the next results. Note, however, that the graph of one $S(. ,c)$ will be intersected by an infinite number of other graphs of $S(. ,d)$.}

We will study now some additional properties of the family of maps $S(x,a)$. The first step is to consider some especial functions.
\begin{definition}For a fixed pair $a, b \in \Sigma$ we define $\Delta: S^1 \times \Sigma \times \Sigma \to \mathbb{R}$ by
$\Delta(x,a,b)= S(x,a)- S(x,b),$
that is $C^2$ smooth on $x\in(0,1).$
\end{definition}
Computing this derivatives we get
$\Delta'(x,a,b)= S'(x,a)- S'(x,b)$   and $\Delta''(x,a,b)= S''(x,a)- S''(x,b),$
thus we get two consequences. The first: if $A$ is twist and $a \neq b$ then $\Delta'(x,a,b) \neq 0$, more precisely, if $a<b$ then $\Delta'(x,a,b) > 0$ else, if $a>b$ then $\Delta'(x,a,b) < 0$. The second consequence is for quadratic potentials, if $A$ is quadratic then $A''=cte$ and this implies that $ \Delta''(x,a,b)=0$, thus
$\Delta(x,a,b)= \Delta(0,a,b)+ x \Delta'(0,a,b),$
for $x \in S^1$.
The twist property give us a certain geometric structure on the family $S(x,a)$. If we assume $a$ is optimal for $x=0$ and define $a^-=\{w \in \Sigma, \; w<a \}$ and $a^+=\{w \in \Sigma, \; w>a \}$ we get the picture described by Figure~\ref{figoptimaltwistorder}.

Indeed, $\Delta(0,a,b) >0$ and $\Delta'(x,a,b) >0$ because $b>a$, thus $\Delta(x,a,b)$ is increasing what means that $S(x,a)$ and $S(x,b)$ has no intersection. On the other hand $c \in a^-$ which means that $\Delta'(x,a,b) < 0$ thus $S(x,c)$ can intersect $S(x,a)$ in just one point.
\begin{figure}[h!]
  \centering
  \includegraphics[scale=0.7,angle=0]{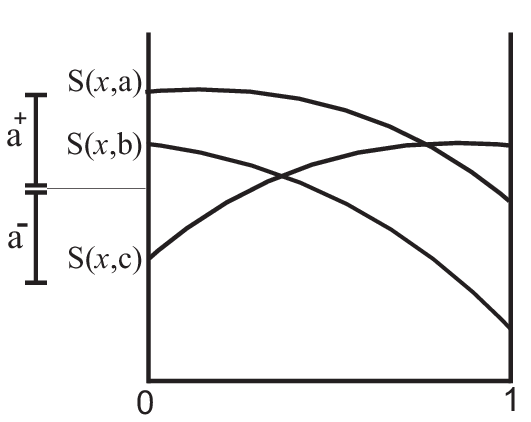}\\
  \caption{$\,b \in a^+$ and $c \in a^-$}\label{figoptimaltwistorder}
\end{figure}
Reciprocally, the twist property allow us to determinate the exact order of every three members $a,b$ and $c$ from the geometrical position of $S(x,a), S(x,b)$ and $S(x,c)$. We call this the triangle property; this  means that if the corresponding positions are as in the Figure~\ref{figtwistorder}, then, we get that   $a>c>b$.
\begin{figure}[h!]
  \centering
  \includegraphics[scale=0.7,angle=0]{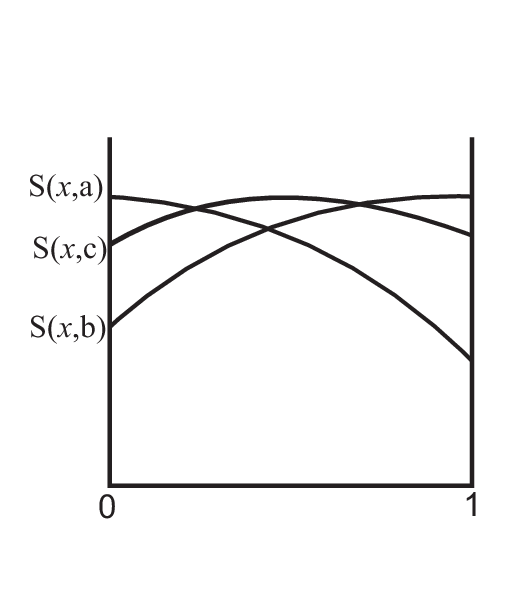}\\
  \caption{Triangle property.}\label{figtwistorder}
\end{figure}
\begin{proposition} \label{ooi}
Suppose $S$ satisfies the twist condition  for some fixed $a$ and $b$, the positions of the graphs of $S(.,a)$ and $S(.,b)$ are described by  figure 5.  We assume $x_0$ is such that
$0=\Delta(x_0,a,b)= S(x_0,a)- S(x_0,b),$
then
$\frac{\partial S}{\partial x} (x_0,a) < \frac{\partial S}{\partial x} (x_0,b).$
\end{proposition}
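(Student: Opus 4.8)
The plan is to deduce the statement from the strict monotonicity of $x\mapsto\Delta(x,a,b)$ that is forced by the twist condition, combined with the shape of the crossing depicted in Figure~\ref{figtwist}.

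First I would record the regularity and reformulate the twist hypothesis. Since $A$ is $C^2$ and $\lambda>1/d$, for the fixed symbols $a$ and $b$ the functions $S(\cdot,a)$, $S(\cdot,b)$, and hence $\Delta(\cdot,a,b)=S(\cdot,a)-S(\cdot,b)$, are $C^2$ on $(0,1)$, so $\Delta'(x,a,b)=S'(x,a)-S'(x,b)$ is continuous there. Because $W(x,c)=S(x,c)-S(\overline{x},c)$ differs from $S(x,c)$ only by a quantity independent of $x$, we have $\frac{\partial W}{\partial x}(x,c)=\frac{\partial S}{\partial x}(x,c)$ for every $c$, so the twist property applied to the pair $\{a,b\}$ says exactly that $\Delta'(x,a,b)$ never vanishes on $(0,1)$ and keeps a constant sign — positive if $a<b$, negative if $a>b$. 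Consequently $\Delta(\cdot,a,b)$ is strictly monotone on $(0,1)$, strictly increasing when $a<b$ and strictly decreasing when $a>b$; in particular its only zero is a transversal crossing, never a tangency, so the hypothesis $\Delta(x_0,a,b)=0$ exhibits $x_0$ as a genuine sign change of $\Delta$.

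Next I would read off the crossing from Figure~\ref{figtwist}: there the graph of $S(\cdot,a)$ lies above that of $S(\cdot,b)$ just to the left of $x_0$ and below it just to the right, i.e. $\Delta(x,a,b)>0$ for $x$ slightly less than $x_0$ and $\Delta(x,a,b)<0$ for $x$ slightly greater than $x_0$. A strictly monotone $C^1$ function realizing this sign pattern must be strictly decreasing, so by the trichotomy above we are in the case $a>b$ and $\Delta'(x,a,b)<0$ throughout $(0,1)$. Evaluating at $x_0$ gives $\frac{\partial S}{\partial x}(x_0,a)-\frac{\partial S}{\partial x}(x_0,b)=\Delta'(x_0,a,b)<0$, which is the claim; in passing we also recover $b<a$, matching the caption of Figure~\ref{figtwist}.

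The only genuinely delicate point is the faithful translation of the picture in Figure~\ref{figtwist} into the analytic statement that $\Delta(\cdot,a,b)$ changes sign from $+$ to $-$ at $x_0$; once that is pinned down, the twist condition does all the work and only elementary monotonicity remains. If one prefers not to appeal to the figure, the same conclusion can be extracted from the bookkeeping already carried out just before the proposition: taking $a$ optimal at a base point and assuming $b<a$ forces $\Delta(\cdot,a,b)$ to start positive and be strictly decreasing, so its unique zero $x_0$ is a downcrossing and $\Delta'(x_0,a,b)<0$.
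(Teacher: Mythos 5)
Your proof is correct and follows essentially the same route as the paper's one-line argument: the twist condition fixes the sign of $\Delta'(x,a,b)$ according to the order of $a$ and $b$, and the configuration of Figure~5 (caption: ``compatible with the inequality $b<a$'') forces $a>b$, whence $\Delta'(x_0,a,b)<0$. Your additional care in translating the picture into the sign change of $\Delta$ at $x_0$ is a reasonable expansion of what the paper leaves implicit.
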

\begin{proof}
The proof follows from the fact that
$\frac{\partial S}{\partial x} (x_0,a) \,-\, \frac{\partial S}{\partial x} (x_0,b)=\Delta'(x_0,a,b) < 0.$
\end{proof}
The twist property assures a transversality condition on the intersections of the leaves described by the different graphs of $S(.,a)$ (see beginning of section 4 in \cite{T1}). We point out that the twist condition was not explicitly considered in \cite{T1}.
\subsection{Invariance properties of the envelope}
We already know that $ b(x)$  given by $b_{\lambda,A}(x)= \sup_{ a \in \Sigma} \, S(x,a)=S(x,a(x)),$ is the upper envelope of the family $S(x,a)$. We remind the reader that the map $\mathbb{T}^{-1}$ is defined by  $\mathbb{T}^{-1} (x,a)= (\tau_{a_0}(x), \sigma(a))$. It is also  well defined
$$S(x,a)= A(\tau_{a_0}(x)) + \lambda S(\tau_{a_0}(x), \sigma(a)).$$
We will prove that  the upper envelope of the family $S(x,a)$ is invariant by $\mathbb{T}^{-1}$.
Abusing of the notation we set $a(x)$ as \textbf{\emph{the set of such solutions for a given fixed $x$}}; $a(x)$ is indeed a multi function, that is, $b(x)= S(x,a), \; \forall a \in a(x)$. We are going to prove that the first symbol in $a(x)$ uniquely determined by  the symbol $i_{0}$ that turns out to be the maximum $ b(x) = \max_{i} \{ \lambda \, b(\tau_{i} x ) + A(\tau_{i} x)\},$ more precisely, $b(x) = \lambda \, b(\tau_{i_{0}} x ) + A(\tau_{i_{0}} x)$.

We begin with a technical and crucial lemma.
\begin{lemma} \label{angle} If $A$ is twist and $a>b$, with $d(a,b) = \frac{1}{2^{N}}$ then the angle $\alpha$ between two intersecting $S(x,a)=S(x,b)$ satisfy
$$\tan(\alpha)= \Delta'(x,a,b)  \leq \|A'\|_{\infty} \left(\frac{\lambda}{2}\right)^{N} \frac{2}{2- \lambda}.$$
\end{lemma}
\begin{proof} As $A$ is twist $S(x,a)$ and  $S(x,b)$ are transversal and the positive angle is given by $\tan(\alpha)= \Delta'(x,a,b)$.
$$\Delta'(x,a,b) = \frac{\partial S}{\partial x}(x,a) - \frac{\partial S}{\partial x}(x,b) = \sum_{k=0}^{\infty} \lambda^{k} (A'(\tau_{k,a} x)  - A'(\tau_{k,b} x))  \frac{1}{2^{k+1}} $$

But  $$ \frac{1}{2 } \sum_{k=n}^{\infty} \left(\frac{\lambda}{2}\right)^{k} |A'(\tau_{k,a} x)  - A'(\tau_{k,b} x)|
\leq  \|A'\|_{\infty}  \sum_{k=n}^{\infty} \left(\frac{\lambda}{2}\right)^{k}=$$ $$ =  \|A'\|_{\infty} \left(\frac{\lambda}{2}\right)^{N} \frac{1}{1- \frac{\lambda}{2}} =\|A'\|_{\infty} \left(\frac{\lambda}{2}\right)^{N} \frac{2}{2- \lambda}.$$
\end{proof}

\medskip

We point out that we do not need to take $\lambda$ close to $1$ for the above result.
Now, we want to show that for any fixed $\lambda$, under the twist condition plus another technical condition, there exist a finite number of points $c_j$, $j=1,2,..,k$, such that
$$b(x)= \sup_{ c \in \{1,2,..,d\}^\mathbb{N}} \, S(x,c) =
\sup_{j=1,2,..,k} \, S(x,c_j).$$

A natural question is to ask about the nature of these points $c_j$, $j=1,2,..,k$. In the case the $\lambda$-maximizing probability is a unique periodic orbit and $A$ is twist  we will be able to describe some properties (see section \ref{tre}). Some properties depend of the combinatorics of the position of the orbits (see Theorem \ref{okl}). It can happen (see example below) that the $\lambda$-maximizing probability is a periodic orbit of period $2$ and we need to use $3$ points $c_1,c_2,c_3$ in the above equation (see figure \ref{figdriftiter}).

\medskip

\begin{lemma} \label{invariance}
If $ b(x) = \lambda \, b(\tau_{i_{0}} x ) + A(\tau_{i_{0}} x)$ then
$i_{0}*a(\tau_{i_{0}} x)\,\in a(x),$
where $*$ means the concatenation.
\begin{figure}[h!]
  \centering
  \includegraphics[scale=0.6,angle=0]{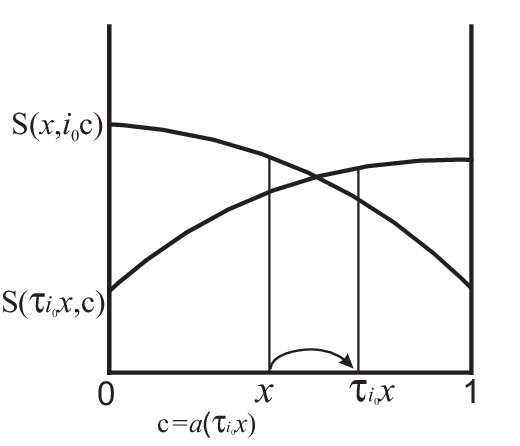}\\
  \caption{ }\label{figinvariance}
\end{figure}
Reciprocally, if $b(x)= S(x,c)$, then, $b(\tau_{c_{0}} x ) = S( \tau_{c_{0}} x, \sigma c)$, and $b(x) = \lambda \, b(\tau_{c_{0}} x ) + A(\tau_{c_{0}} x)$. In other words, if $b(x)= S(x,c)$ then the first symbol $i=c_{0}$ of $c$ attains the supremum $ b(x) = \max_{i} \{ \lambda \, b(\tau_{i} x ) + A(\tau_{i} x)\}$.
\end{lemma}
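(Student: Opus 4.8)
The plan is to derive both implications directly from the two structural facts already in hand: the one‑step recursion for the value function,
$$S(x,a) = A(\tau_{a_0}(x)) + \lambda\, S(\tau_{a_0}(x), \sigma(a)),$$
valid for every $x\in(0,1)$ and every $a=(a_0,a_1,\dots)\in\Sigma$, and the Envelope Theorem~\ref{envthm} together with the $\lambda$-calibrated identity $b(x)=\max_i\{\lambda b(\tau_i x)+A(\tau_i x)\}$. Combined, these give $S(x,c)\le b(x)$ for every $c\in\Sigma$, with equality precisely when $c\in a(x)$.

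First direction. Assume $b(x) = \lambda\, b(\tau_{i_0} x ) + A(\tau_{i_0} x)$. By the Envelope Theorem pick a realizer $a'=a(\tau_{i_0} x)$, so $b(\tau_{i_0} x)=S(\tau_{i_0} x, a')$. Set $c=i_0*a'$, i.e.\ $c_0=i_0$ and $\sigma(c)=a'$. Then the recursion applied to $a=c$ yields
$$S(x,c)=A(\tau_{i_0}(x))+\lambda\, S(\tau_{i_0}(x),a')=A(\tau_{i_0} x)+\lambda\, b(\tau_{i_0} x)=b(x).$$
Since $b(x)=\sup_{w\in\Sigma} S(x,w)$ and the supremum is attained at $w=c$, the element $c=i_0*a(\tau_{i_0} x)$ is a realizer for $x$, that is $i_0*a(\tau_{i_0} x)\in a(x)$.

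Reciprocal direction. Assume $b(x)=S(x,c)$ and write $c=c_0*\sigma(c)$. By the recursion, $b(x)=S(x,c)=A(\tau_{c_0} x)+\lambda\, S(\tau_{c_0} x,\sigma c)$. Using $S(\tau_{c_0} x,\sigma c)\le b(\tau_{c_0} x)$ and the calibrated identity $A(\tau_{c_0} x)+\lambda b(\tau_{c_0} x)\le \max_i\{\lambda b(\tau_i x)+A(\tau_i x)\}=b(x)$, we obtain $b(x)\le b(x)$, so both inequalities are equalities. The first gives $b(\tau_{c_0} x)=S(\tau_{c_0} x,\sigma c)$; the second gives $b(x)=\lambda\, b(\tau_{c_0} x)+A(\tau_{c_0} x)$, with the maximum over $i$ attained at $i=c_0$. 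There is no serious obstacle here: the content is carried entirely by the Envelope Theorem and the recursion, and the only care needed is the bookkeeping of domains (each $\tau_i$ is defined on $S^1$, mapping into $[(i-1)/d,i/d]$, so all compositions $\tau_{k,a}x$ and the identities above make sense on $(0,1)$ and extend by continuity of $S(\cdot,a)$). Conceptually the lemma says that the first symbol of any realizer of $x$ is forced to be an index realizing the $\lambda$-calibrated maximum at $x$ — exactly the compatibility of the (multivalued) realizer map with the inverse-branch dynamics $\mathbb{T}^{-1}(x,a)=(\tau_{a_0} x,\sigma a)$, which is what makes the envelope invariant under $\mathbb{T}^{-1}$.
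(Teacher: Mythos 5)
Your proof is correct and follows essentially the same route as the paper's: the first direction uses the recursion $S(x,i_0*c)=A(\tau_{i_0}x)+\lambda S(\tau_{i_0}x,c)$ together with $b(\tau_{i_0}x)=S(\tau_{i_0}x,c)$ for a realizer $c$, and the reciprocal combines the envelope inequality $S\le b$ with the calibration identity to force equality throughout. The only cosmetic difference is that you sandwich $b(x)\le\cdots\le b(x)$ in one chain, whereas the paper derives the reverse inequality $b(\tau_{c_0}x)\le S(\tau_{c_0}x,\sigma c)$ first and then substitutes back; the content is identical.
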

\begin{proof}
Suppose that  $ b(x) = \lambda \, b(\tau_{i_{0}} x ) + A(\tau_{i_{0}} x)$ and $c \in a(\tau_{i_{0}} x)$,
then $b(\tau_{i_{0}} x )= S( \tau_{i_{0}} x, c)$. An easy computation shows that
$\lambda \, b(\tau_{i_{0}} x ) + A(\tau_{i_{0}} x) = A(\tau_{i_{0}} x)+ \lambda \, S( \tau_{i_{0}} x, c) = S(x, i_{0}*c)$, so $b(x) = S(x, i_{0}*c)$ which means that  $i_{0}*c \in a(x).$

For the reciprocal suppose   $b(x)= S(x,c)= A(\tau_{c_{0}} x)+\lambda S( \tau_{c_{0}} x, \sigma c)$. Since $ b(x) = \max_{i} \{ \lambda \, b(\tau_{i} x ) + A(\tau_{i} x)\}\geq  \lambda \, b(\tau_{c_{0}} x ) + A(\tau_{c_{0}} x),$ we get,
$$A(\tau_{c_{0}} x)+\lambda S( \tau_{c_{0}} x, \sigma c) \geq \lambda \, b(\tau_{c_{0}} x ) + A(\tau_{c_{0}} x),$$ which is equivalent to  $b(\tau_{c_{0}} x ) \leq S( \tau_{c_{0}} x, \sigma c)$, thus $b(\tau_{c_{0}} x ) = S( \tau_{c_{0}} x, \sigma c)$.  Substituting this in the previous equation we have that $b(x)= S(x,c)= A(\tau_{c_{0}} x)+\lambda b(\tau_{c_{0}} x )$.
\end{proof}
If we suppose additionally that $W$ satisfies the twist condition then, if $a(\tau_{i_{0}} x)$ is not a single point, then by Proposition \ref{ooi}, the function $b$ is not differentiable at $x$ because $b(x)= S(x,i_{0}*c)$ and $b(x)= S(x,i_{0}*d)$. However, $S'(x,i_{0}*c) \neq S'(x,i_{0}d)$, if $c\neq d$, where $c,d \in a(\tau_{i_{0}} x)$.
\begin{corollary} \label{spread}  The set $\Omega=\{(x,a) \in [0,1] \times \Sigma \; | \; b(x)= S(x,a)\}$ is $\mathbb{T}^{-1}$-invariant.
\end{corollary}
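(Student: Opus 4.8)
The plan is to read the statement directly off the reciprocal half of Lemma~\ref{invariance}. Recall that $\mathbb{T}^{-1}$ is the single-valued map $\mathbb{T}^{-1}(x,a)=(\tau_{a_0}(x),\sigma(a))$, which is defined for every $x\in(0,1)$ and every $a\in\Sigma$; so showing that $\Omega$ is $\mathbb{T}^{-1}$-invariant amounts to proving the inclusion $\mathbb{T}^{-1}(\Omega)\subseteq\Omega$.

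First I would fix $(x,a)\in\Omega$, i.e.\ $b(x)=S(x,a)$ with $x\in(0,1)$, and aim to conclude that $b(\tau_{a_0}(x))=S(\tau_{a_0}(x),\sigma(a))$. The two ingredients are the cocycle identity $S(x,a)=A(\tau_{a_0}(x))+\lambda\,S(\tau_{a_0}(x),\sigma(a))$ and the fixed-point (calibration) equation $b(x)=\max_{i}\{\lambda\,b(\tau_i(x))+A(\tau_i(x))\}$. Combining $b(x)=S(x,a)$ with the cocycle identity and the inequality $b(x)\ge \lambda\,b(\tau_{a_0}(x))+A(\tau_{a_0}(x))$ coming from the calibration equation, one cancels $A(\tau_{a_0}(x))$ and divides by $\lambda$ to obtain $S(\tau_{a_0}(x),\sigma(a))\ge b(\tau_{a_0}(x))$. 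On the other hand, the Envelope Theorem~\ref{envthm} gives $b(\tau_{a_0}(x))=\sup_{w\in\Sigma}S(\tau_{a_0}(x),w)\ge S(\tau_{a_0}(x),\sigma(a))$. The two inequalities force equality, hence $(\tau_{a_0}(x),\sigma(a))=\mathbb{T}^{-1}(x,a)\in\Omega$, which is exactly the claim.

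There is essentially no obstacle here: the content is already contained in Lemma~\ref{invariance}, and the corollary is merely the restatement that the graph $\Omega$ of the envelope is carried into itself by $\mathbb{T}^{-1}$. The one point deserving a word of care is the behaviour at the endpoints $\{0,1\}$ of $S^1$, where the functions $S(\cdot,a)$ and the inverse branches are only defined up to the boundary; since $\Omega$ is closed and its fibre over each $x\in(0,1)$ is nonempty, these boundary points cause no difficulty. I would also remark that one should not expect the reverse inclusion $\mathbb{T}^{-1}(\Omega)=\Omega$: given $(y,c)\in\Omega$, its natural $\mathbb{T}$-preimage $(T(y),\,i_0*c)$, where $i_0$ is the branch index with $\tau_{i_0}(T(y))=y$, need not lie in $\Omega$, because the symbol realizing $b(T(y))$ may differ from $i_0$. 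This is precisely why the corollary is phrased as invariance under $\mathbb{T}^{-1}$ rather than as $\mathbb{T}$-invariance.
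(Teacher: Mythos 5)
Your proposal is correct and follows essentially the same route as the paper: the paper's proof of Corollary~\ref{spread} is a one-line application of the reciprocal half of Lemma~\ref{invariance}, and your argument simply unfolds that lemma's proof (the cocycle identity plus the calibration inequality give $b(\tau_{a_0}x)\le S(\tau_{a_0}x,\sigma a)$, and the envelope characterization gives the reverse inequality). Your closing remarks on the endpoints and on why only $\mathbb{T}^{-1}$-invariance (not $\mathbb{T}$-invariance) holds are accurate but not needed.
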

\begin{proof}
Indeed, if $(x,a) \in \Omega$ then $b(x)= S(x,a)$ and by Lemma~\ref{invariance} we have $b(\tau_{c_{0}} x )= S( \tau_{c_{0}} x, \sigma c)$. Thus $\mathbb{T}^{-1}(x,a) \in \Omega$.
\end{proof}
\begin{definition} A crossing point $x = x_{ab}$ is the single point $x$ satisfying $S(x,a)= S(x,b)$ with $a>b$.
\end{definition}
\begin{figure}[h!]
  \centering
  \includegraphics[scale=0.6,angle=0]{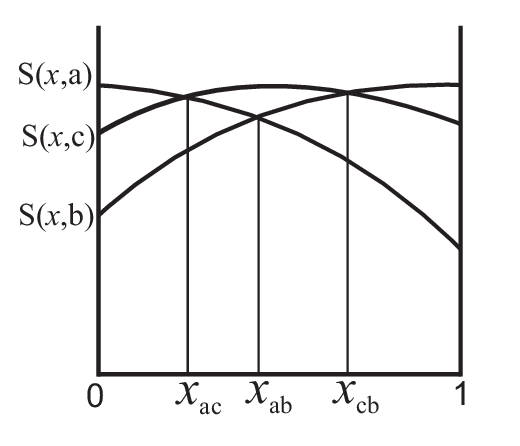}\\
  \caption{ }\label{figcrossing}
\end{figure}
When $A$ is twist, the crossing points are ordered according to the order of $a, b$ and $c$ as in the  above figure.

\begin{definition}
 A turning point is a point  $x$ such that $ b(x) = \lambda \, b(\tau_{i_{0}} x ) + A(\tau_{i_{0}} x)$ for more than one symbol $i_0$. The concept of turning point was introduced in \cite{CLO} and \cite{LOS}. A turning point is \textbf{simple} if its forward orbit is finite.
\end{definition}



\begin{corollary} \label{twist finite optm} Assume $A$  satisfies the twist condition and moreover that  there exists a finite number of turning points and that each one is simple, then the boundary of the attractor is given by a finite number of $C^2$ pieces (of unstable manifolds).
\end{corollary}
\begin{proof} Let $x$ be a point on the boundary of the attractor where the optimal symbolic changes, see Figure~\ref{figdriftiter}.
If $S(x,a)=b(x)=S(x,c)$ and $a_i=c_i$ for $i=0,..., N-1$ we get from Lemma~\ref{invariance} that $b(x)= S(x,c)$, then, $b(\tau_{c_{0}} x ) = S( \tau_{c_{0}} x, \sigma c)$ and $b(x)= S(x,a)$, then, $b(\tau_{a_{0}} x ) = S( \tau_{a_{0}} x, \sigma a)$. Choosing $x_1=\tau_{a_{0}} x$ we get $S( x_1, \sigma a)= S( x_1, \sigma c)$. Proceeding in this way we obtain $S( x_{N-1}, \sigma^{N-1} a)= S( x_{N-1}, \sigma^{N-1} c)$. What means that $z= x_{N-1}$ is a turning point  and $T^{N}(z)=x$. In this way, we conclude that any point $x$ such that $S(x,a)=b(x)=S(x,c)$ lies in the orbit of a turning point. Since the number of turning points is finite and its orbits are finite, because they are simple, we obtain that there is just a finite number of this points. Finally, by the twist property we guarantee that $\sharp \{x \, | \, S(x,a)=b(x)=S(x,c)\}=1$ that is, the number of pieces in the boundary is finite.
\end{proof}

We will present later examples where $b$ is explicit and have a finite number o realizers. Therefore
the boundary of the attractor is given by a finite number of $C^2$ pieces (see section \ref{morep}).

In general,  explicit computations are very difficult to find, but we will present some computational evidence to illustrate  the conclusion of  Corollary~\ref{twist finite optm}.
\begin{example} Take $\varepsilon=0.005$, $\lambda=0.51$, $drift= 0.05$ and $gap=0.001$, we use a truncated version $S(x,a)=  \sum_{k=0}^{7} \lambda^k A(\,\tau_{k,a}x)$ where $A(x):=A_{\varepsilon}(x)= - \left(  1.010\,x- 0.455 \right) ^{2}$ is a perturbation of $- \left(  \,x- 0.5 \right) ^{2}$ by $A_{\varepsilon}(x)=- \left(   x- 0.5  + \varepsilon \phi(x) + drift \right) ^{2}$ and $\phi(x)=2x-1$. The figure below shows the maximum $b(x)=\max_{a}S(x,a)$ in a grid of 25 divisions of $[0,1]$, and suggest the form of the graph of $b$ in the figure \ref{figpotdriftcomp}.
\begin{figure}[h!]
  \centering
  \includegraphics[scale=0.4,angle=0]{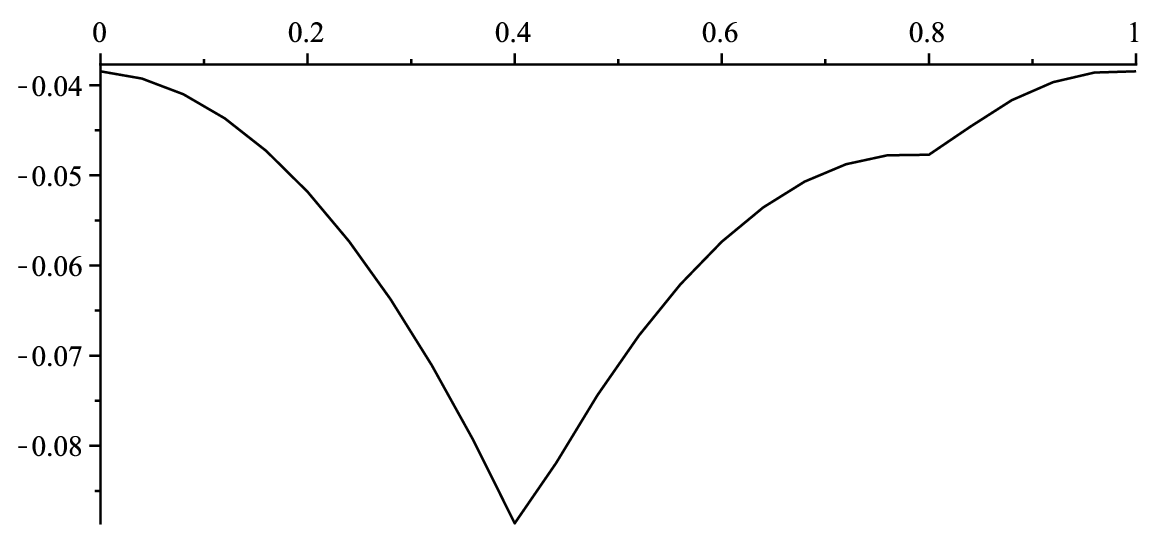}\\
  \caption{ }\label{figpotdriftcomp}
\end{figure}
This figure suggest that there is 3 pieces $S(x,10101....)$, $S(x,01010....)$ and a unknown  $S(x,c_{0}c_{1}c_{2}...)$. Since the perturbed potential still having the  twist property we get $c_0 =0$. Taking $x$ in the right side of de second crossing point $v$ we get $b(x)= S(x,c_{0}*\sigma c)$ and from Corollary~\ref{spread} we get $b(\tau_{c_{0}} x )= S( \tau_{c_{0}} x, \sigma c)$, in particular $b(\tau_{c_{0}} x )=b(\tau_{0} x )$ lies in the right side of the first crossing point $u$  because the first symbol of the optimal sequences  for points before $u$ is 1. Therefore, $\sigma c$ should be $(0101...)$. From this hypothetic deductions we can suppose that, if $u$ and $v$ are the crossing points, then the formula for the superior envelop $b(x)$ should be
$$b(x):=
\left\{
  \begin{array}{lcl}
    S(x,101010....)&, & 0< x \leq u\\
    S(x,010101....)&, & u< x \leq v\\
    S(x,001010....)&, & v< x \leq 1
  \end{array}
\right.
$$
This is exactly what the Figure~\ref{figdriftiter} shows, that is, we already know from Corollary~\ref{invariance} that there is only a finite number of pieces, we just deduce now what are the geometric positions of these pieces. In the graph we plot $b(x)+gap=b(x)+ 0.001$ in order to distinguish the difference between this and the picture computationally obtained.  If we iterate some orbits close to the attractor by the transformation $F(x,s)=(T(x), \lambda s + A(x))$, we can see that there is numerical evidence that our claim is true in this particular case.
\begin{figure}[h!]
  \centering
  \includegraphics[scale=0.6,angle=0]{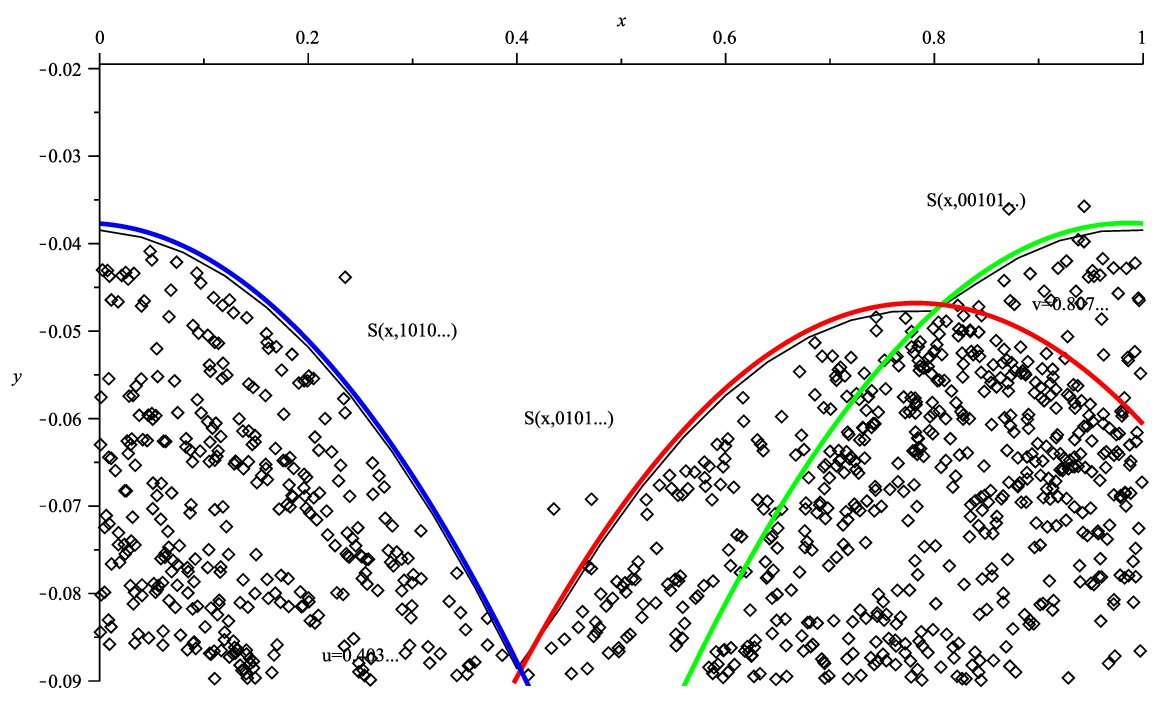}\\
  \caption{Iteration by 4000 times  of $F$.}\label{figdriftiter}
\end{figure}
\end{example}

Denote by $\mathcal{T}$ the set of turning points and $\Lambda=\cup_{n\geq 0}\,T^{n}( \mathcal{T} )$. In order to characterize the turning points we follow a discounted version of the notation introduced by \cite{Bousch0} for Sturmian measures when the symbols are $\{0,1\}$.

\begin{definition} If $ b(x) = \max_{i} \{ \lambda \, b(\tau_{i} x ) + A(\tau_{i} x)\},$ we define the remainders associated with the $b$  and $A$ as
$r(x,a)= b(x) - \lambda \, b(\tau_{a_{0}} x ) - A(\tau_{a_{0}} x),$
$$R(x)=r(x,0\cdots)-r(x,1\cdots)= (\lambda \, b(\tau_{1} x ) + A(\tau_{1} x))-(\lambda \, b(\tau_{0} x ) + A(\tau_{0} x)).$$
\end{definition}
So, $r(x,a)\geq 0$ and attains zero with the right symbol $a_{0}$. Also, $R(x)=0$, if and only if, $x$ is a turning point that is,
$\mathcal{T}=R^{-1}(0).$
\begin{definition} A continuous potential $A$ satisfy the $k$-Sturmian condition if $\sharp R^{-1}(0)= k.$ In particular, there is just $k$ turning points.
\end{definition}
In \cite{Bousch0} Sturmian measures are that ones where $k=1$. In a slightly different setting the author shows that $A(x)=cos(2\pi(x-\omega))$ satisfy the $1$-Sturmian condition for any  $\omega \in \mathbb{R}/\mathbb{Z}$.
\begin{lemma} \label{turning a(x)} $\Lambda$ contains the set of points $x$ where $a(x)$ is not a single point.
\end{lemma}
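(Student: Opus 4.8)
The plan is to show directly that every point $x_0$ at which $a(x_0)$ is multi‑valued lies in $\Lambda = \bigcup_{n\ge 0} T^n(\mathcal{T})$, that is, $x_0 = T^n(x)$ for some turning point $x$ and some $n\ge 0$. The case $n=0$ is the case where $x_0$ is itself a turning point, so I must handle the situation where $x_0$ has more than one optimal symbolic element but the \emph{first} symbol is nonetheless unique. First I would take two distinct realizers $a,c\in a(x_0)$, so $b(x_0)=S(x_0,a)=S(x_0,c)$ with $a\ne c$. If $a_0\ne c_0$, then by Lemma \ref{invariance} (its reciprocal half) both $a_0$ and $c_0$ attain the maximum $b(x_0)=\max_i\{\lambda b(\tau_i x_0)+A(\tau_i x_0)\}$, so $x_0$ is a turning point and $x_0\in\mathcal{T}\subset\Lambda$. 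Otherwise $a_0=c_0=:i_0$, and here is where I push the nonuniqueness one step forward along the dynamics.

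The key step is the following: if $a_0=c_0=i_0$, then by the reciprocal part of Lemma \ref{invariance} we have $b(\tau_{i_0}x_0)=S(\tau_{i_0}x_0,\sigma a)$ and $b(\tau_{i_0}x_0)=S(\tau_{i_0}x_0,\sigma c)$, and since $a\ne c$ but $a_0=c_0$ we get $\sigma a\ne \sigma c$; both lie in $a(\tau_{i_0}x_0)$, so $a(\tau_{i_0}x_0)$ is again not a single point. Write $x_1:=\tau_{i_0}x_0$, so $T(x_1)=x_0$ and $a(x_1)$ is multi‑valued. Iterating this, I obtain a sequence $x_0,x_1,x_2,\dots$ with $T(x_{n+1})=x_n$ (equivalently $x_0=T^n(x_n)$) and $a(x_n)$ multi‑valued for every $n$, produced by stripping the agreed common initial block of the two realizers. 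If at some stage the first symbols of the two tail realizers of $x_n$ disagree, then by the first paragraph $x_n\in\mathcal{T}$ and hence $x_0=T^n(x_n)\in\Lambda$, and we are done.

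So the only remaining case is that the stripping never terminates: at every stage $n$, the two realizers $\sigma^n a$ and $\sigma^n c$ (restricted to being realizers for $x_n$) share their first symbol. But then $a$ and $c$ agree in \emph{all} coordinates, contradicting $a\ne c$; this is where I use that $a$ and $c$ were genuinely distinct elements of $\Sigma$. Hence the process must terminate after finitely many steps at a turning point. I expect the main subtlety — and the step to state carefully — is the bookkeeping in the reciprocal half of Lemma \ref{invariance}: one must check that when we pass from a realizer $a$ of $x_0$ with $a_0=i_0$ to $\sigma a$, the tail $\sigma a$ is genuinely a realizer of $x_1=\tau_{i_0}x_0$ (i.e. $b(x_1)=S(x_1,\sigma a)$), and that two distinct realizers of $x_0$ sharing the first symbol produce two \emph{distinct} realizers of $x_1$; both facts are immediate from Lemma \ref{invariance}, but they are exactly what makes the induction go through. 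No twist hypothesis is needed for this lemma.
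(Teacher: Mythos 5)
Your proposal is correct and is essentially the paper's own argument: the paper takes two distinct realizers, locates the first coordinate $n$ where they disagree, applies the reciprocal half of Lemma \ref{invariance} (via Corollary \ref{spread}) $n$ times to transfer both realizers to $z=\tau_{i_{n-1}}\cdots\tau_{i_0}x$, and concludes $z\in\mathcal{T}$ with $T^n(z)=x$. Your version merely phrases this as a one-symbol-at-a-time stripping that terminates at that same first disagreement, which is the same proof.
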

\begin{proof}
Suppose that there is two different elements $ c, d \in a(x)$, where $c$ and $d$ are of the form  $c=(i_0, i_1, ..., i_{n-1}, 1, c_{n+1}, ...)$, $d=(i_0, i_1, ..., i_{n-1}, 0, d_{n+1}, ...)$. Take $z= \tau_{i_{n-1}} \cdots \tau_{i_{0}} x $. Applying Corollary~\ref{spread} we get:
$$ c \in a(x) \Rightarrow b(x)= S(x,c) \Rightarrow b(\tau_{i_{0}} x )= S( \tau_{i_{0}} x, \sigma c) \cdots $$
$$\Rightarrow b(\tau_{i_{1}}\tau_{i_{0}} x )= S( \tau_{i_{1}}\tau_{i_{0}} x, \sigma^{2} c) \cdots \Rightarrow  b(z)= S(z,(1, c_{n+1}, ...)).$$
$$ d \in a(x) \Rightarrow b(x)= S(x,d) \Rightarrow b(\tau_{i_{0}} x )= S( \tau_{i_{0}} x, \sigma d) \cdots \Rightarrow $$
$$\Rightarrow b(\tau_{i_{1}}\tau_{i_{0}} x )= S( \tau_{i_{1}}\tau_{i_{0}} x, \sigma^{2} d) \cdots \Rightarrow  b(z)= S(z,(0, d_{n+1}, ...)).$$
Thus, $b(z) = \lambda \, b(\tau_{0} z ) + A(\tau_{0} z)$ and $b(z) = \lambda \, b(\tau_{1} z ) + A(\tau_{1} z)$, by Lemma~\ref{invariance}, that is, $z \in \mathcal{T}$. Since $T^{n}(z)=x$ we get $ x \in \Lambda$.
\end{proof}
If the turning points are finite ($A$ satisfying $k$-Sturmian condition)  and pre-periodic points for $T$, then there exists finitely many points where the optimal symbolic changes because $\sharp\Lambda < \infty$.
\begin{corollary} \label{sisi} If $\sharp\Lambda$ is finite then the graph of $b$ is a union of a finite number of $S(x,a)$.
\end{corollary}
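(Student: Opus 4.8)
The plan is to localise the failure of uniqueness of the realizer using Lemma~\ref{turning a(x)}, and then to convert ``unique realizer on an interval'' into ``constant realizer on an interval'' by a connectedness argument; this gives the same conclusion as Corollary~\ref{twist finite optm}, but now from the hypothesis $\sharp\Lambda<\infty$ rather than from the twist condition. Let $D=\{x\in S^1:\ a(x)\ \text{is not a single point}\}$. By Lemma~\ref{turning a(x)} we have $D\subset\Lambda$, hence $\sharp D<\infty$ by hypothesis. Put $D_0=D\cup\{0\}$, where $0\in S^1$ is the point through which the circle is identified with $[0,1)$; $D_0$ is still finite. Removing $D_0$ splits $S^1$ into finitely many open arcs $I_1,\dots,I_N$, and on each $I_j$ the realizer $a(x)$ is, by definition of $D$, the \emph{unique} element $a\in\Sigma$ with $b(x)=S(x,a)$.

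Next I would show that $x\mapsto a(x)$ is continuous on each $I_j$. Since $(0,1)$ is forward invariant under every inverse branch $\tau_i$ (indeed $\tau_i((0,1))\subset((i-1)/d,i/d)\subset(0,1)$), no branch endpoint is ever met along a backward orbit, so every summand $\lambda^k A(\tau_{k,a}x)$ is continuous on $(S^1\setminus\{0\})\times\Sigma$; the tail of the series is uniformly bounded by $\|A\|_\infty\lambda^N/(1-\lambda)$, so $(x,a)\mapsto S(x,a)=\sum_{k\geq0}\lambda^k A(\tau_{k,a}x)$ is jointly continuous there. As $b$ is continuous, the set $\Omega\cap(I_j\times\Sigma)$ of Corollary~\ref{spread} is closed. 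Now take $x_n\to x$ in $I_j$; by compactness of $\Sigma$ pass to a subsequence with $a(x_n)\to a'$, so $(x,a')\in\Omega$, i.e. $b(x)=S(x,a')$, and uniqueness on $I_j$ forces $a'=a(x)$. Every subsequential limit being $a(x)$, we conclude $a(x_n)\to a(x)$.

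Since $\Sigma=\{1,\dots,d\}^{\mathbb{N}}$ is totally disconnected and each $I_j$ is connected, the continuous map $x\mapsto a(x)$ is constant on $I_j$, say $a(x)\equiv a_j$; hence $b(x)=S(x,a_j)$ for all $x\in I_j$. Finally, at each of the finitely many $p\in D_0$ one has $b(p)=S(p,a)$ for some realizer $a$ of $p$ (at $p=0$ interpreting $S(0,\cdot)$ as the appropriate one-sided limit), and $b$ is continuous on $S^1$; therefore the graph of $b$ equals $\bigcup_{j=1}^{N}\{(x,S(x,a_j)):x\in\overline{I_j}\}$, a union of finitely many pieces of the graphs $S(\cdot,a_j)$, which is the assertion.

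The main technical point is the joint continuity of $(x,a)\mapsto S(x,a)$ invoked in the second paragraph, which is exactly why one must excise the point $0$; the forward invariance of $(0,1)$ under the inverse branches makes this harmless, since the only excised point is $0$ itself, and it is reinstated at the end by continuity of $b$. Everything else is soft: Lemma~\ref{turning a(x)} supplies the combinatorial input (only finitely many points carry more than one realizer), and total disconnectedness of $\Sigma$ performs the upgrade from ``locally unique'' to ``locally constant''.
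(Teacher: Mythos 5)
Your proof is correct and follows the paper's own route: the paper's entire proof is the single sentence ``The claim follows from Lemma~\ref{turning a(x)}'', i.e., it too reduces everything to the finiteness of the set where the realizer fails to be unique. What you add --- the joint continuity of $S$ away from the identification point, the relative closedness of $\Omega$ on each complementary arc, and the connectedness versus total-disconnectedness argument upgrading ``unique realizer on each arc'' to ``constant realizer on each arc'' --- is exactly the step the paper leaves implicit, and you supply it correctly.
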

\begin{proof}
The claim follows from Lemma~\ref{turning a(x)}.
\end{proof}
\section{Symmetric twist potentials} \label{Sym}
In this section we exhibit some explicit examples.
\begin{theorem} \label{symmet} Let $A$ be a symmetric potential, that is,
$A(1-x)=A(x)$ for any $x \in [0,1]$. In addition we assume that $A$ is twist.
Denote by $b:[0,1] \to \mathbb{R}$ the function such that
$$b(x)=\left\{
  \begin{array}{ll}
    S(x,(10)^\infty  ), &  0 \leq x\leq 1/2;  \\
    S(x,(01)^\infty  ), &  1/2< x\leq 1.
  \end{array}
\right.
$$
Then, $b$ is a $\lambda$-calibrated subaction for $A(x)$, that is, for any $x\in [0,1]$
$$ b(x) = \max_{i=0,1} \{ \lambda \, b(\tau_{i} x ) + A(\tau_{i} x)\}.$$
\end{theorem}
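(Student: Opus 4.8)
The plan is to exploit the reflection symmetry of $A$ to reduce everything to the subinterval $[0,1/2]$ and to pin down one crucial equality, and then to let the twist condition do the rest. Throughout, $\tau_0(x)=x/2$ and $\tau_1(x)=(x+1)/2$ are the two inverse branches of $T$. First I would record how the branches interact with the reflection $x\mapsto 1-x$: a direct computation gives $\tau_i(1-x)=1-\tau_{1-i}(x)$ for $i=0,1$, hence, iterating, $\tau_{k,a}(1-x)=1-\tau_{k,\bar a}(x)$ where $\bar a=(1-a_0,1-a_1,\dots)$. Since $A(1-y)=A(y)$, this yields the symbolic symmetry
$$S(1-x,a)=\sum_{k\ge 0}\lambda^k A(\tau_{k,a}(1-x))=\sum_{k\ge 0}\lambda^k A(\tau_{k,\bar a}(x))=S(x,\bar a).$$
Because $\overline{(01)^\infty}=(10)^\infty$, this specializes to $S(1-x,(01)^\infty)=S(x,(10)^\infty)$, from which I read off: (i) $b(1-x)=b(x)$ for all $x$; (ii) at $x=1/2$ the two pieces in the definition of $b$ coincide, $S(1/2,(01)^\infty)=S(1/2,(10)^\infty)$, so $b$ is continuous (each $S(\cdot,a)$ being continuous on $(0,1)$ and up to the endpoints), and $b(0)=b(1)$, so $b$ is well defined on $S^1$.

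Next I would reduce the calibration identity to $[0,1/2]$: since $b$ and $A$ are reflection-symmetric and $\tau_i(1-x)=1-\tau_{1-i}(x)$, the identity $b(y)=\max_{i}\{\lambda b(\tau_i y)+A(\tau_i y)\}$ at $y=1-x$ is, after reindexing $i\mapsto 1-i$, exactly the identity at $y=x$. So fix $x\in[0,1/2]$. Then $\tau_1 x=(x+1)/2\in[1/2,3/4]$, so $b(\tau_1 x)=S(\tau_1 x,(01)^\infty)$ (at $x=0$ using the equality at $1/2$ just established), and the cocycle relation $S(x,a)=A(\tau_{a_0}x)+\lambda S(\tau_{a_0}x,\sigma a)$ with $a=(10)^\infty$ gives
$$b(x)=S(x,(10)^\infty)=A(\tau_1 x)+\lambda S(\tau_1 x,(01)^\infty)=A(\tau_1 x)+\lambda b(\tau_1 x),$$
so the symbol $i=1$ already realizes the right-hand side. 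For $i=0$: $\tau_0 x=x/2\in[0,1/4]$, so $b(\tau_0 x)=S(\tau_0 x,(10)^\infty)$, and the cocycle relation with first symbol $0$ gives $A(\tau_0 x)+\lambda b(\tau_0 x)=S(x,0*(10)^\infty)=S(x,(01)^\infty)$. Hence the calibration identity at $x$ is equivalent to the inequality
$$S(x,(01)^\infty)\le S(x,(10)^\infty),\qquad x\in[0,1/2].$$

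Finally I would close with the twist condition. Put $\Delta(x)=S(x,(01)^\infty)-S(x,(10)^\infty)$, which is $C^2$ on $(0,1)$. In the lexicographic order $0<1$ gives $(01)^\infty<(10)^\infty$, so the twist condition (equivalently phrased for $S$, as noted after Definition~\ref{tui}) says $\Delta'(x)=\frac{\partial S}{\partial x}(x,(01)^\infty)-\frac{\partial S}{\partial x}(x,(10)^\infty)>0$ for all $x\in(0,1)$; thus $\Delta$ is strictly increasing. By the first paragraph $\Delta(1/2)=0$, hence $\Delta\le 0$ on $[0,1/2]$, which is precisely the inequality needed, and the theorem follows.

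The main obstacle, or rather the conceptual heart of the argument, is this last interplay: the reflection symmetry supplies only the single equality $\Delta(1/2)=0$, and it is the twist condition — via monotonicity of $\Delta$ — that converts it into the one-sided inequality on the whole half-interval; without twist, $\Delta$ could oscillate in sign and the proposed piecewise $b$ would fail the calibration equation somewhere in $(0,1/2)$. The only other points requiring care are bookkeeping ones — the meaning of $\tau_i$, the concatenation $0*(10)^\infty=(01)^\infty$, and the behaviour at $0$, $1/2$, $1$ — all handled by the cocycle identity and the continuity of $S(\cdot,a)$ up to those points.
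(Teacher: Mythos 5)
Your proof is correct and follows essentially the same route as the paper's: the reflection identity $S(1-x,(01)^\infty)=S(x,(10)^\infty)$, the resulting equality at $x=1/2$, the twist condition to fix the sign of $S(\cdot,(01)^\infty)-S(\cdot,(10)^\infty)$ on each half-interval, and the cocycle relation to identify $\lambda b(\tau_i x)+A(\tau_i x)$ with $S(x,i*(\cdots))$. Your two minor streamlinings — using $b(1-x)=b(x)$ to reduce the verification to $[0,1/2]$, and phrasing the twist step as strict monotonicity of $\Delta(x)=S(x,(01)^\infty)-S(x,(10)^\infty)$ rather than the paper's "transversal, hence cannot intersect twice" — are sound and do not change the substance of the argument.
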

\begin{proof}
As $A$ is symmetric $A(1/2 -t)=A(1/2 +t)$ for $t \in [0,1/2]$. We claim that  $S(x,(10)^\infty)= S((1-x),(01)^\infty).$ Indeed
$$\tau_{0}(1-x)=\frac{1-x}{2}=1/2 - x/2\text{ and }\tau_{1}(x)=\frac{1+x}{2}=1/2 + x/2,$$
then, $A(\tau_{0}(1-x))=A(\tau_{1}(x)).$
Analogously,
$\tau_{1}\tau_{0}(1-x)=1/4 - x/4 + 1/2$ and  $\tau_{0}\tau_{1}(x)=\frac{1+x}{2}=1/4 + x/4= 1/2 - (1/4 - x/4),$
then $A(\tau_{1}\tau_{0}(1-x))=A(\tau_{0}\tau_{1}(x)),$
and so on. Thus
$$ S(x,(10)^\infty) = A(\,\tau_{1} (x)) + \lambda\, A(\,\tau_{0 }\circ \tau_{1}(x) )+  \lambda^2\, A(\,\tau_{1 }\circ \tau_{0 }\circ \tau_{1} (x))+...=$$
$$  A(\,\tau_{0}(1-x)) + \lambda\, A(\,\tau_{1}\tau_{0}(1-x) )+  \lambda^2\, A(\,\tau_{0 }\circ \tau_{1 }\circ \tau_{0} (1-x))+...=S((1-x),(01)^\infty).$$
In particular, $S(0,(10)^\infty)= S(1,(01)^\infty)$ and
$S(1/2,(10)^\infty)= S(1/2,(01)^\infty)$, that is $b(x)$ is continuous. By the twist property $S(x,(10)^\infty)$ and $ S(x,(01)^\infty)$ are transversal in $x=1/2$, then, as can not exist two points of intersection we get
\begin{center}
$S(x,(10)^\infty)> S(x,(01)^\infty)$ if $x<1/2$;\\
$S(x,(10)^\infty)< S(x,(01)^\infty)$ if $x>1/2$.\\
\end{center}
Now we will prove that the above $b$ is a $\lambda$-calibrated subaction for $A(x)$.

We divide the argument in two cases:\\
\textbf{Case 1- $x<1/2$}\\
If $i=0$ then
$$\begin{array}{cc}
\lambda b(\tau_{i} x ) + A(\tau_{i} x)&= A(\tau_{0} x) + \lambda b(\tau_{0} x )\\
&= A(\tau_{0} x) + \lambda S(\tau_{0} x,(10)^\infty)\\
&= S(x,(01)^\infty)< S(x,(10)^\infty)\\
\end{array}$$
because $x<1/2$.\\
If $i=1$ then
$$\begin{array}{cc}
\lambda b(\tau_{i} x ) + A(\tau_{i} x)&= A(\tau_{1} x) + \lambda b(\tau_{1} x )\\
&= A(\tau_{1} x) + \lambda S(\tau_{1} x,(01)^\infty)\\
&= S(x,(10)^\infty)
\end{array}$$
because $\tau_{1}x>1/2$.\\
Thus,
$ \max_{i=0,1} \{ \lambda \, b(\tau_{i} x ) + A(\tau_{i} x)\}=S(x,(10)^\infty)=b(x)$
if $x<1/2$.\\
\textbf{Case 2- $x>1/2$}\\
If $i=0$ then
$$\begin{array}{cc}
\lambda b(\tau_{i} x ) + A(\tau_{i} x)&= A(\tau_{0} x) + \lambda b(\tau_{0} x )\\
&= A(\tau_{0} x) + \lambda S(\tau_{0} x,(10)^\infty)\\
&= S(x,(01)^\infty)
\end{array}$$
because $\tau_{0} x < 1/2$. If $i=1$ then
$$\begin{array}{cc}
\lambda b(\tau_{i} x ) + A(\tau_{i} x)&= A(\tau_{1} x) + \lambda b(\tau_{1} x )\\
&= A(\tau_{1} x) + \lambda S(\tau_{1} x,(01)^\infty)\\
&= S(x,(10)^\infty) < S(x,(01)^\infty)
\end{array}$$
because $x > 1/2$. Thus,
$ \max_{i=0,1} \{ \lambda \, b(\tau_{i} x ) + A(\tau_{i} x)\}=S(x,(01)^\infty)=b(x)$
if $x  >1/2$.
\end{proof}
It follows from above that in this case the $\lambda$-calibrated subaction is piecewise differentiable if $A$ is differentiable. It is piecewise analytic (two domains of analyticity) if $A$ is analytic.
\section{A characterization of when the boundary is piecewise smooth in the case of period $2$ and $3$ } \label{tre}
We will present a characterization of when the boundary is piecewise smooth in the case the $\lambda$-maximizing probability has period $2$ and  $3$ (see Theorem \ref{okl}).
As we know that a $\lambda$-calibrated subaction for $A(x)$, that is, for any $x\in [0,1]$
$ b(x) = \max_{i=0,1} \{ \lambda \, b(\tau_{i} x ) + A(\tau_{i} x)\},$ is unique
and $\displaystyle \bar{b}(x) = \sup_{a \in \{0,1\}^{\mathbb{N}}} S(x,a)$, is also a solution of this equation, we get that the superior envelope of
$\{S(x,a) \, | \, a\in \{0,1\}^{\mathbb{N}}\}$
is piecewise regular as much as $A$.
\begin{lemma} \label{transv1} If $0<x<y<1$ and $S(x, a')=\displaystyle\sup_{a \in \{0,1\}^{\mathbb{N}}} S(x,a)$ and $S(y, a'')=\displaystyle\sup_{a \in \{0,1\}^{\mathbb{N}}} S(y,a)$ then there is $x<z<y$ such that $S(z, a')=S(z, a'')$. In particular, from the twist property, $a'>a''$ and
\begin{center}
$S(x,a')> S(x,a'')$ if $x<z$;\\
$S(x,a')< S(x,a'')$ if $x>z$.\\
\end{center}
\end{lemma}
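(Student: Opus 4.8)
\textbf{Proof plan for Lemma~\ref{transv1}.}
The plan is to first produce the crossing point $z$ by a continuity/intermediate-value argument applied to the difference of the two graphs, and then to pin down the sign pattern using the twist-induced transversality (Proposition~\ref{ooi}) together with the fact that the graph of $b$ lies above each $S(\cdot,a)$ (Theorem~\ref{envthm}). Concretely, set $g(t)=S(t,a')-S(t,a'')$ on $[x,y]$. By hypothesis $a'$ is optimal at $x$, so $S(x,a')=b(x)\ge S(x,a'')$, i.e. $g(x)\ge 0$; symmetrically $a''$ is optimal at $y$, so $S(y,a'')=b(y)\ge S(y,a')$, i.e. $g(y)\le 0$. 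If $a'=a''$ the statement is vacuous (or $x,y$ would have to coincide in a degenerate way), so assume $a'\neq a''$. Since $g$ is continuous on $[x,y]$ (indeed $C^2$ on $(0,1)$, as $S(\cdot,a')$ and $S(\cdot,a'')$ are), the intermediate value theorem yields some $z\in[x,y]$ with $g(z)=0$, that is $S(z,a')=S(z,a'')$.

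Next I would argue that $z$ is interior and is the \emph{unique} zero of $g$, and extract the claimed order $a'>a''$. By Lemma~\ref{angle}/the twist hypothesis, whenever $S(\cdot,a')$ and $S(\cdot,a'')$ meet they do so transversally, so $g$ has only simple zeros; in particular the zero set of $g$ in $[x,y]$ is finite and at each zero $g$ strictly changes sign. If there were two zeros $z_1<z_2$, then between them $g$ would have to return to its earlier sign, forcing a third zero or a non-transversal tangency — contradicting the twist transversality (this is exactly the ``two graphs cannot cut twice'' principle spelled out after Definition~\ref{tui} and used in Theorem~\ref{symmet}). Hence $z$ is the unique point of $(x,y)$ with $g(z)=0$ (and one checks $g\not\equiv 0$ near $x$: if $g(x)=0$ as well then by transversality $g$ is strictly monotone near $x$, so $g<0$ just to the right, and $z=x$ only if $g$ stays nonpositive on all of $[x,y]$, which combined with uniqueness still gives a single crossing; in all cases there is exactly one sign change). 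Since $g(x)\ge 0\ge g(y)$ and $g$ is strictly decreasing through its unique zero $z$, we get $g(t)>0$ for $t<z$ and $g(t)<0$ for $t>z$ in $(x,y)$, i.e. $S(t,a')>S(t,a'')$ for $t<z$ and $S(t,a')<S(t,a'')$ for $t>z$. Finally, $g'(z)<0$ means $\tfrac{\partial S}{\partial x}(z,a')<\tfrac{\partial S}{\partial x}(z,a'')$, which by the twist condition (Definition~\ref{tui}, in the $\Delta'$ form: $a<b\iff \Delta'(\cdot,a,b)>0$) forces $a'>a''$ in the lexicographic order.

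The main obstacle is the bookkeeping at the endpoints: the inequalities $g(x)\ge 0$ and $g(y)\le 0$ are not strict (the optimizer at $x$ need not be strictly better than $a''$, etc.), so a priori $z$ could coincide with $x$ or $y$, or $g$ could vanish on a subinterval. The twist transversality of Lemma~\ref{angle} is precisely what rules out the degenerate ``vanishing on an interval'' case and reduces everything to a single transversal crossing; I would state this reduction carefully rather than glossing over it. Once transversality is invoked, the uniqueness of $z$, its interiority (when $a'$ is strictly optimal at $x$, which is the relevant case), the strict sign pattern, and the order relation $a'>a''$ all follow from the one-crossing picture already established in the paper, with no further computation needed.
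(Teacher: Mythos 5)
The paper actually leaves Lemma~\ref{transv1} without a written proof, so there is no author argument to compare against; your IVT setup (using $g(t)=S(t,a')-S(t,a'')$, the envelope inequalities $g(x)\ge 0$, $g(y)\le 0$ from Theorem~\ref{envthm}, and then extracting the order $a'>a''$ from the sign of $g'$ via the twist definition) is the natural route and gives the stated conclusions.

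However, your uniqueness step has a genuine gap. Transversality in the sense of Lemma~\ref{angle} only tells you that the zeros of $g$ are simple and hence finite in number; it does not forbid three or five crossings. Moreover the sentence ``between them $g$ would have to return to its earlier sign, forcing a third zero or a non-transversal tangency'' is incorrect as stated: between two consecutive simple zeros $g$ has constant sign, so no extra zero is forced there, and an odd number of simple crossings $\geq 3$ is perfectly compatible with $g(x)\ge 0\ge g(y)$ and with transversality. What you actually need, and what the paper records explicitly in Section~\ref{oop} right after introducing $\Delta$, is stronger than transversality at crossing points: under the twist condition, $\Delta'(t,a',a'')=\frac{\partial S}{\partial x}(t,a')-\frac{\partial S}{\partial x}(t,a'')$ has a fixed sign for \emph{every} $t$, not merely a nonzero value where $g$ vanishes. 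Hence $g$ is strictly monotone on $(0,1)$. Strict monotonicity, combined with $g(x)\ge 0\ge g(y)$ and $x<y$, forces $g$ to be strictly decreasing (the strictly increasing case is immediately inconsistent), which at once yields a unique zero $z$, the sign pattern $g>0$ on $[x,z)$ and $g<0$ on $(z,y]$, and $\Delta'<0$, hence $a'>a''$ by Definition~\ref{tui}. You should replace the transversality detour with this direct monotonicity observation. Your remark that $z$ is strictly interior only when $a''$ is not also optimal at $x$ (resp. $a'$ not optimal at $y$), and the exclusion of $a'=a''$, are legitimate caveats; these are minor imprecisions in the lemma's own phrasing rather than defects in your plan.
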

We assume here that $T(x)$ is the transformation $2\, x$ (mod 1).
Now we going to consider the case where the maximizing measure is supported  in a periodic orbit of period 3. We know that if the minimum period is 3 there is just two possible periodic sequences: $(100100100...)=(100)^{\infty}$ and $(110110110...)=(110)^{\infty}$.
We choose the case $(110110110...)=(110)^{\infty}$ with the correspondent periodic point $x_0= 3/7< T^{2}(x_0)=5/7 < T(x_0)=6/7$.
\begin{figure}[h!]
  \centering
  \includegraphics[scale=0.9,angle=0]{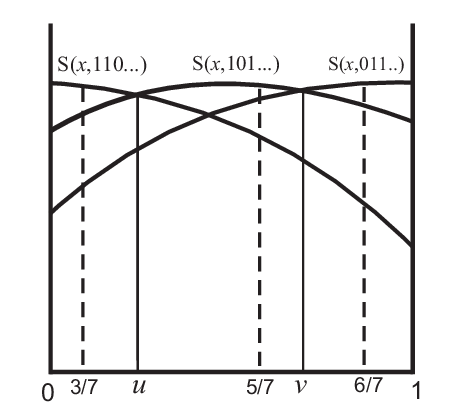}\\
  \caption{ }\label{figperiodthree}
\end{figure}
\begin{theorem} \label{okl} Let $A$ be a twist potential such that\\
$S(x_0, (110)^{\infty})=\displaystyle\sup_{a \in \{0,1\}^{\mathbb{N}}} S(x_0,a) ,S(T(x_0), (011)^{\infty})=\displaystyle\sup_{a \in \{0,1\}^{\mathbb{N}}} S(T(x_0),a)$,\\ $S( T^{2}(x_0), (101)^{\infty})=\displaystyle\sup_{a \in \{0,1\}^{\mathbb{N}}} S(T^{2}(x_0),a)$,  where $T^{3}(x_0)=x_0$. Let $u \in [x_0, T^{2}(x_0)]$ and  $v \in [T^{2}(x_0), T(x_0)]$ given by Lemma~\ref{transv1}, that is, $S( u, (110)^{\infty})= S(u, (101)^{\infty})$ and
$S( v, (101)^{\infty})= S(v, (011)^{\infty})$.
Denote by $b:[0,1] \to \mathbb{R}$ the function such that
$$b(x)=\left\{
  \begin{array}{ll}
    S(x,(110)^\infty  ), &  0 \leq x\leq u;  \\
    S(x,(101)^\infty  ), &  u\leq x\leq v; \\
    S(x,(011)^\infty  ), &  v\leq x\leq 1.
  \end{array}
\right.
$$
Then, $b$ is a $\lambda$-calibrated subaction for $A(x)$, that is, for any $x\in [0,1]$
$ b(x) = \max_{i=0,1} \{ \lambda \, b(\tau_{i} x ) + A(\tau_{i} x)\},$
if and only if, $\tau_{1}[0,u] \subseteq [u, v], \; \tau_{1}[u,v] \subseteq [v, 1] \text{  and } \tau_{0}[v,1] \subseteq [0, u].$
\end{theorem}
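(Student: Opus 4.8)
The plan is to base everything on the three functions $g_1:=S(\cdot,(110)^{\infty})$, $g_2:=S(\cdot,(101)^{\infty})$, $g_3:=S(\cdot,(011)^{\infty})$ and on the cocycle identity $S(x,a)=A(\tau_{a_0}x)+\lambda S(\tau_{a_0}x,\sigma a)$, which for these cyclically shifted sequences reads $g_1=A\circ\tau_1+\lambda\,(g_2\circ\tau_1)$, $g_2=A\circ\tau_1+\lambda\,(g_3\circ\tau_1)$ and $g_3=A\circ\tau_0+\lambda\,(g_1\circ\tau_0)$. First I would record the geometry forced by the twist hypothesis. Since $A$ is twist, each of $g_1-g_2$, $g_2-g_3$, $g_1-g_3$ is strictly monotone in $x$ (its derivative is $\Delta'$, whose sign is dictated by the lexicographic order of the two sequences), hence has exactly one zero; denote these by $u,v,w$. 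Applying Lemma~\ref{transv1} to the pair $((110)^{\infty},(101)^{\infty})$ at $x_0=3/7<5/7=T^2(x_0)$ and to the pair $((101)^{\infty},(011)^{\infty})$ at $5/7<6/7=T(x_0)$ gives $u\in(3/7,5/7)$ and $v\in(5/7,6/7)$ — this is exactly the $u,v$ of the statement — and in particular $u<v$. Writing $g_1-g_3=(g_1-g_2)+(g_2-g_3)$ and evaluating at $u$ and $v$ shows $(g_1-g_3)(u)>0$ and $(g_1-g_3)(v)<0$, so $u<w<v$. From strict monotonicity one then reads off the sign pattern $g_1\ge g_2,\ g_1\ge g_3$ on $[0,u]$; $g_2\ge g_1,\ g_2\ge g_3$ on $[u,v]$; $g_3\ge g_1,\ g_3\ge g_2$ on $[v,1]$; in particular the piecewise $b$ of the statement equals $\max\{g_1,g_2,g_3\}$, so $b\le\bar b$.

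For the implication ``inclusions $\Rightarrow$ $b$ is a $\lambda$-calibrated subaction'' the plan is to verify $b(x)=\max_{i}\{\lambda b(\tau_i x)+A(\tau_i x)\}$ directly on the three pieces; the point is that the cocycle identities turn ``the right branch gives equality'' into the inclusion hypotheses and ``the wrong branch gives $\le$'' into the sign pattern above. On $[0,u]$: $\tau_1[0,u]\subseteq[u,v]$ gives $b(\tau_1 x)=g_2(\tau_1 x)$, hence $\lambda b(\tau_1 x)+A(\tau_1 x)=g_1(x)=b(x)$; and $\tau_0 x=x/2\in[0,u]$ gives $\lambda b(\tau_0 x)+A(\tau_0 x)=g_3(x)\le g_1(x)=b(x)$. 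On $[u,v]$: $\tau_1[u,v]\subseteq[v,1]$ gives $\lambda b(\tau_1 x)+A(\tau_1 x)=g_2(x)=b(x)$; and since $v\le 6/7$ one has $\tau_0 x=x/2\le v/2\le 3/7\le u$, so $\tau_0 x\in[0,u]$ and $\lambda b(\tau_0 x)+A(\tau_0 x)=g_3(x)\le g_2(x)=b(x)$. On $[v,1]$: $\tau_0[v,1]\subseteq[0,u]$ gives $\lambda b(\tau_0 x)+A(\tau_0 x)=g_3(x)=b(x)$; and since $v\ge 5/7$ one has $\tau_1 x=(x+1)/2\ge 6/7\ge v$, so $\tau_1 x\in[v,1]$ and $\lambda b(\tau_1 x)+A(\tau_1 x)=g_2(x)\le g_3(x)=b(x)$. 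Hence in each case the maximum equals $b(x)$.

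For the converse, assume the piecewise $b$ is a $\lambda$-calibrated subaction; by uniqueness of such a subaction, $b=\bar b$, so on the interior of each piece the prescribed cyclic sequence realizes the envelope and the reciprocal part of Lemma~\ref{invariance} applies. For $x\in(0,u)$, from $b(x)=S(x,(110)^{\infty})$ it gives $b(\tau_1 x)=S(\tau_1 x,(101)^{\infty})=g_2(\tau_1 x)$, i.e. $\tau_1 x\in\{y:\,b(y)=g_2(y)\}$; by the sign pattern this set is exactly $[u,v]$ (on $[0,u)$ one has $b=g_1$ with $g_1=g_2$ only at $u$, on $(v,1]$ one has $b=g_3$ with $g_3=g_2$ only at $v$), so letting $x$ range over $(0,u)$ and using continuity of $\tau_1$ gives $\tau_1[0,u]\subseteq[u,v]$. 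The same argument with $(101)^{\infty}$ on $(u,v)$ — where $\{y:\,b(y)=g_3(y)\}=[v,1]$ because $u<w<v$ — gives $\tau_1[u,v]\subseteq[v,1]$, and with $(011)^{\infty}$ on $(v,1)$ — where $\{y:\,b(y)=g_1(y)\}=[0,u]$, again using $u<w<v$ — gives $\tau_0[v,1]\subseteq[0,u]$. This establishes the equivalence.

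I expect the main obstacle to be the twist-geometry of the first paragraph: pinning down that the three crossing points interlace as $u<w<v$ and extracting the exact sign of each $g_i-g_j$ on each of the three sub-intervals, since this is the only place where the twist condition and Lemma~\ref{transv1} are genuinely invoked; everything downstream is then a bookkeeping of which inverse branch sends which sub-interval into which.
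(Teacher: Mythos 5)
Your proof is correct, and it diverges from the paper's in an instructive way. The forward implication (inclusions $\Rightarrow$ calibration) follows the paper's strategy of checking the calibration identity on each of the three sub-intervals via the cocycle relations $g_1=A\circ\tau_1+\lambda(g_2\circ\tau_1)$, etc., but you add two things the paper leaves implicit or handles loosely: you derive the interlacing $u<w<v$ of the three crossing points and the resulting sign pattern of $g_i-g_j$ on each piece (the paper uses inequalities such as $S(x,(011)^\infty)<S(x,(110)^\infty)$ on $[0,u]$ without comment, and justifies the sub-case $\tau_0x\in[u,v]$ by appeal to a figure), and you observe that the numerical bounds $u\ge 3/7$ and $5/7\le v\le 6/7$ force $\tau_0[u,v]\subseteq[0,u]$ and $\tau_1[v,1]\subseteq[v,1]$ outright, so the extra sub-cases the paper discusses are vacuous. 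For the converse the paper simply reruns the case analysis and notes that whenever an inclusion fails at some $x$ the wrong branch is selected, so the max differs from $b(x)$; you instead invoke uniqueness of the $\lambda$-calibrated subaction to identify $b$ with the envelope $\sup_aS(\cdot,a)$, apply the reciprocal part of Lemma~\ref{invariance} to push the realizer $(110)^\infty$ (resp.\ $(101)^\infty$, $(011)^\infty$) forward, and read off the inclusions from the identities $\{y:b(y)=g_2(y)\}=[u,v]$, $\{y:b(y)=g_3(y)\}=[v,1]$, $\{y:b(y)=g_1(y)\}=[0,u]$, which your sign pattern makes exact. Your converse is arguably more robust: it sidesteps the paper's unexamined possibility that $\tau_1(x)$ falls back into $[0,u)$ when $u>1/2$, and it makes transparent why the twist condition (strict monotonicity of each $g_i-g_j$, hence a single crossing) is what pins down the three level sets. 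The only point worth flagging is that your converse uses uniqueness of the calibrated subaction and the envelope theorem as external inputs, whereas the paper's argument is self-contained within the case analysis; both inputs are established earlier in the paper, so this is legitimate.
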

\begin{proof}
We must to divide in several cases.\\
\textbf{Case 1:} Consider $0 \leq x\leq u$.\\
As $\tau_{0}(x)= 1/2 x  < u$ thus
$$\lambda b(\tau_{0} x ) + A(\tau_{0} x)=\lambda S(\tau_{0} x,(110)^\infty) + A(\tau_{0} x)=$$
$$=\lambda S(\tau_{0} x,(110)^\infty) + A(\tau_{0} x)=S(x,0(110)^\infty)=$$
$$=S(x,(011)^\infty)<S(x,(110)^\infty )=b(x)$$
As $\tau_{1}(x)= 1/2 x + 1/2 > u$ we have two possibilities\\
a) If $\tau_{1}(x) \in (u, v]$ then\\
$$\lambda b(\tau_{1} x ) + A(\tau_{1} x)=\lambda S(\tau_{1} x,(101)^\infty) + A(\tau_{1} x)=$$
$$=\lambda S(\tau_{1} x,(101)^\infty) + A(\tau_{1} x)=S(x,1(101)^\infty)=$$
$$=S(x,(110)^\infty)=b(x)$$
b)If $\tau_{1}(x) \in [v, 1]$ then\\
$$\lambda b(\tau_{1} x ) + A(\tau_{1} x)=\lambda S(\tau_{1} x,(011)^\infty) + A(\tau_{1} x)=$$
$$=\lambda S(\tau_{1} x,(011)^\infty) + A(\tau_{1} x)=S(x,1(011)^\infty)=$$
$$=S(x,(101)^\infty)< S(x,(110)^\infty )= b(x).$$
Thus  $$ b(x)
\left\{
  \begin{array}{ll}
    = \max_{i=0,1} \{ \lambda \, b(\tau_{i} x ) + A(\tau_{i} x)\}, & {\rm if } \; \tau_{1}[0,u] \subseteq [u, v]  \\
    < \max_{i=0,1} \{ \lambda \, b(\tau_{i} x ) + A(\tau_{i} x)\}, &  {\rm otherwise.}
  \end{array}
\right.
$$ for $0 \leq x\leq u$.\\

\textbf{Case 2:} Consider $u \leq x \leq v$.\\
As $\tau_{0}(x)= 1/2 x  < v$ we have two possibilities\\
a) If $\tau_{0}(x) \in [0, u]$ then\\
$$\lambda b(\tau_{0} x ) + A(\tau_{0} x)=\lambda S(\tau_{0} x,(110)^\infty) + A(\tau_{0} x)=$$
$$=\lambda S(\tau_{0} x,(110)^\infty) + A(\tau_{0} x)=S(x,0(110)^\infty)=$$
$$=S(x,(011)^\infty)< S(x,(101)^\infty )=b(x)$$
b)If $\tau_{0}(x) \in [u,v]$ then\\
$$\lambda b(\tau_{0} x ) + A(\tau_{0} x)=\lambda S(\tau_{0} x,(101)^\infty) + A(\tau_{0} x)=$$
$$=\lambda S(\tau_{0} x,(101)^\infty) + A(\tau_{0} x)=S(x,0(101)^\infty)=$$
$$=S(x,0(101)^\infty)<S(x,(101)^\infty )= b(x),$$
because $S(x,0(101)^\infty)>S(x,(101)^\infty )$ contradicts the twist condition, as one can see from the  Figure~\ref{figperiodthreetwist}.
\begin{figure}[h!]
  \centering
  \includegraphics[scale=0.9,angle=0]{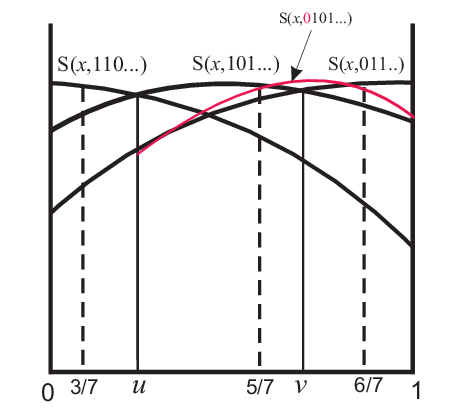}\\
  \caption{ }\label{figperiodthreetwist}
\end{figure}
As $\tau_{1}(x) \in [5/7, 6/7]$ we have two possibilities\\
a) If $\tau_{1}(x) \in [u, v]$ then\\
$$\lambda b(\tau_{1} x ) + A(\tau_{1} x)=\lambda S(\tau_{1} x,(101)^\infty) + A(\tau_{1} x)=$$
$$=\lambda S(\tau_{1} x,(101)^\infty) + A(\tau_{1} x)=S(x,1(101)^\infty)=$$
$$=S(x,(110)^\infty)< S(x,(101)^\infty )= b(x).$$

b) If $\tau_{1}(x) \in [v, 1]$ then\\
$$\lambda b(\tau_{1} x ) + A(\tau_{1} x)=\lambda S(\tau_{1} x,(011)^\infty) + A(\tau_{1} x)=$$
$$=\lambda S(\tau_{1} x,(011)^\infty) + A(\tau_{1} x)=S(x,1(011)^\infty)=$$
$$=S(x,(101)^\infty)= b(x).$$
Thus
$$ b(x)
\left\{
  \begin{array}{ll}
    = \max_{i=0,1} \{ \lambda \, b(\tau_{i} x ) + A(\tau_{i} x)\}, & {\rm if } \; \tau_{1}[u,v] \subseteq [v, 1]  \\
    < \max_{i=0,1} \{ \lambda \, b(\tau_{i} x ) + A(\tau_{i} x)\}, &  {\rm otherwise.}
  \end{array}
\right.
$$
for $u \leq x\leq v$.\\

\textbf{Case 3:} Consider $v \leq x \leq 1$.\\
As $\tau_{0}(x)= 1/2 x  < 1/2$ we have two possibilities\\
a) If $\tau_{0}(x) \in [0, u]$ then\\
$$\lambda b(\tau_{0} x ) + A(\tau_{0} x)=\lambda S(\tau_{0} x,(110)^\infty) + A(\tau_{0} x)=$$
$$=\lambda S(\tau_{0} x,(110)^\infty) + A(\tau_{0} x)=S(x,0(110)^\infty)=$$
$$=S(x,(011)^\infty)=b(x).$$
b)If $\tau_{0}(x) \in [u,v]$ then\\
$$\lambda b(\tau_{0} x ) + A(\tau_{0} x)=\lambda S(\tau_{0} x,(101)^\infty) + A(\tau_{0} x)=$$
$$=\lambda S(\tau_{0} x,(101)^\infty) + A(\tau_{0} x)=S(x,0(101)^\infty)=$$
$$=S(x,0(101)^\infty)<S(x,(011)^\infty )= b(x),$$
because $S(x,0(101)^\infty)>S(x,(011)^\infty )$ contradicts the twist (as one can see from Figure~\ref{figperiodthreetwist}).

As $\tau_{1}(x) \in [ v, 1]$ we have\\
$$\lambda b(\tau_{1} x ) + A(\tau_{1} x)=\lambda S(\tau_{1} x,(011)^\infty) + A(\tau_{1} x)=$$
$$=\lambda S(\tau_{1} x,(011)^\infty) + A(\tau_{1} x)=S(x,1(011)^\infty)=$$
$$=S(x,(101)^\infty)< S(x,(011)^\infty)= b(x).$$
Thus
$$ b(x)
\left\{
  \begin{array}{ll}
    = \max_{i=0,1} \{ \lambda \, b(\tau_{i} x ) + A(\tau_{i} x)\}, & {\rm if } \; \tau_{0}[v,1] \subseteq [0, u]  \\
    < \max_{i=0,1} \{ \lambda \, b(\tau_{i} x ) + A(\tau_{i} x)\}, &  {\rm otherwise.}
  \end{array}
\right.
$$
for $v \leq x\leq 1$.\\
\end{proof}

The characterization in the case the maximizing measure has support  in an orbit of period $n$ is similar. One needs to know the combinatorics of the position of the different points of the orbit and then proceed in an analogous  way as in the case of period $3$. We left this to the reader.

\section{Twist properties in the case $T(x)=2x\,$ ( mod 1)} \label{qua}
Let us fix $a=(a_0, a_1,  ... ) \in \{0,1\}^{\mathbb{N}}$. If $A$ is differentiable we can differentiate $S$ with respect to $x$
$$\frac{\partial S}{\partial x}(x,a) = \sum_{k=0}^{\infty} \lambda^{k} A'(\tau_{k,a} x)\frac{\partial }{\partial x} \tau_{k,a} x.$$
We observe that $\tau_{k,a} x$ has an explicit expression:
$\tau_{k,a} x = \frac{1}{2^{k+1}} x + \psi_{k}(a),$
where
$$\psi_{k}(a)= \frac{a_{0}}{2^{k+1}}+\frac{a_{1}}{2^{k}}+ ... + \frac{a_{k}}{2},$$
satisfy the recurrence relation
$2 \psi_{k+1}(a)=\psi_{k}(a) +  a_{k+1}.$
Thus
$$
\frac{\partial S}{\partial x}(x,a) = \sum_{k=0}^{\infty} \lambda^{k} A'(\tau_{k,a} x) \frac{1}{2^{k+1}}
=\frac{1}{2} \sum_{k=0}^{\infty} \left(\frac{\lambda}{2}\right)^{k} A'(\tau_{k,a} x) .
$$
Analogously,
$$
\frac{\partial^{2} S}{\partial x^{2}}(x,a) = \sum_{k=0}^{\infty} \lambda^{k} A''(\tau_{k,a} x) \frac{1}{2^{k+1}}^{2}
=\frac{1}{4} \sum_{k=0}^{\infty} \left(\frac{\lambda}{4}\right)^{k} A''(\tau_{k,a} x),
$$
in particular, if $A'' <0$ then $\frac{\partial^{2} S}{\partial x^{2}}(x,a) <0, \forall a \in \Sigma$. Even if $A$ is not $C^2$ we have the concavity of $S$ from $A$:
\begin{lemma}\label{concavity} Let $A$ be a $C^0$ potential in $S^1$. 

If $A$ is concave (strictly)  then $S(x,a) \text{ is concave (strictly)}, \forall a \in \Sigma$.
\end{lemma}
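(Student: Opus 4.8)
The plan is to exploit the explicit affine form of the inverse branches. Recall from (\ref{valuefunction}) that $S(x,a)=\sum_{k=0}^\infty \lambda^k A(\tau_{k,a}x)$, and that, as computed just above, $\tau_{k,a}x=\frac{1}{2^{k+1}}x+\psi_k(a)$, so for each fixed $k$ and $a$ the map $x\mapsto \tau_{k,a}x$ is affine with nonzero slope. Since the composition of a concave function with an affine map is concave, each term $x\mapsto \lambda^k A(\tau_{k,a}x)$ is concave (the positive weight $\lambda^k$ plays no role in preserving concavity). A finite sum of concave functions is concave, so every partial sum $S_N(x,a)=\sum_{k=0}^N \lambda^k A(\tau_{k,a}x)$ is concave on $(0,1)$.

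Next I would pass to the limit. Because $A$ is continuous on the compact set $S^1$ it is bounded, $\|A\|_\infty<\infty$, so the tail is dominated by $\sum_{k>N}\lambda^k\|A\|_\infty\to 0$ and $S_N(\cdot,a)\to S(\cdot,a)$ uniformly on $S^1$. A pointwise limit of concave functions is concave, since the midpoint (or full convex-combination) inequality passes to the limit; hence $S(\cdot,a)$ is concave. This settles the non-strict case.

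For the strict case I would isolate the $k=0$ term. Fix $x\neq y$ in $(0,1)$ and $t\in(0,1)$. Since $\tau_{0,a}$ is a non-constant affine map we have $\tau_{0,a}x\neq\tau_{0,a}y$, and strict concavity of $A$ produces a strictly positive gap $\delta:=A(\tau_{0,a}(tx+(1-t)y))-t\,A(\tau_{0,a}x)-(1-t)\,A(\tau_{0,a}y)>0$. Each remaining term ($k\geq 1$) contributes a nonnegative quantity by the concavity already established, so summing and using the weights $\lambda^k>0$ gives $S(tx+(1-t)y,a)-t\,S(x,a)-(1-t)\,S(y,a)\geq \delta>0$, which is exactly strict concavity of $S(\cdot,a)$.

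I do not anticipate a serious obstacle: the only points requiring care are (i) checking that the relevant affine maps $\tau_{k,a}$ are genuinely non-constant, so that strict concavity is not destroyed under precomposition, and (ii) making sure the strict inequality survives the infinite summation, which it does because the $k=0$ gap $\delta$ is independent of how many further (nonnegative) terms are added. One may view this lemma as the replacement, valid in the merely $C^0$ setting, of the second-derivative computation $\frac{\partial^2 S}{\partial x^2}(x,a)=\frac14\sum_{k}(\lambda/4)^k A''(\tau_{k,a}x)$: instead of reading the sign off $A''$, one uses the elementary stability of concavity under affine precomposition, finite sums, and uniform limits.
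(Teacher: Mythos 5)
Your proof is correct and rests on exactly the same idea as the paper's: each $\tau_{k,a}$ is affine, so concavity of $A$ passes to every term $\lambda^k A(\tau_{k,a}x)$ and then to the sum. The paper applies the concavity inequality directly inside the infinite sum in one line, whereas you route through partial sums and a uniform limit and treat the strict case explicitly via the $k=0$ term; these are minor elaborations of the same argument, and your handling of strictness is a welcome bit of care the paper omits.
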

\begin{proof}
Fixed $a \in \Sigma$ consider $ x< y$ and $t \in [0,1]$ then
$$S((1-t)x + t y,a)= \sum_{k=0}^{\infty} \lambda^{k} A(\tau_{k,a} [(1-t)x + t y]).$$
Since $\tau_{k,a} (1-t)x + t y = \frac{1}{2^{k+1}} [(1-t)x + t y] + \psi_{k}(a)= (1-t)\tau_{k,a}x + t \tau_{k,a} y,$ we get
$$S((1-t)x + t y,a)= \sum_{k=0}^{\infty} \lambda^{k} A((1-t)\tau_{k,a}x + t \tau_{k,a} y)\geq$$
$$ \geq \sum_{k=0}^{\infty} \lambda^{k} [(1-t) A(\tau_{k,a}x ) + t A(\tau_{k,a} y)] = (1-t)S(x ,a)+ t S( y,a).$$
\end{proof}
\subsection{Formal  Computations}
First we prove two technical lemmas about recursive sums.
\begin{lemma}\label{quadsumderiv} Let $\psi_{k}(a)$ be the function defined above, then $\displaystyle\sum_{k=0}^{\infty} \left(\frac{\lambda}{2}\right)^{k} \psi_{k}(a)=\frac{2}{4-\lambda} Z(a),$
where $\displaystyle Z(a)= \sum_{k=0}^{\infty} \left(\frac{\lambda}{2}\right)^{k} a_{k}$.
\end{lemma}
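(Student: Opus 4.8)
The plan is to exploit the recurrence $2\psi_{k+1}(a)=\psi_k(a)+a_{k+1}$ recorded just above the statement, together with the base value $\psi_0(a)=a_0/2$, to set up a single linear equation for the unknown sum $P:=\sum_{k=0}^{\infty}(\lambda/2)^k\psi_k(a)$. Before any manipulation I would observe that everything in sight converges absolutely: since each $a_k\in\{0,1\}$ we have $0\le\psi_k(a)=\sum_{j=0}^{k}a_j 2^{\,j-k-1}\le\sum_{i=1}^{k+1}2^{-i}<1$, and $0<\lambda/2<1/2<1$, so $P$ is a well-defined finite number; likewise $Z(a)=\sum_{k\ge0}(\lambda/2)^k a_k$ converges. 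This is what licenses the reindexing used below.

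The core step is then a two-line computation. Shifting the summation index, $P-\psi_0(a)=\sum_{k=0}^{\infty}(\lambda/2)^{k+1}\psi_{k+1}(a)$; substituting $\psi_{k+1}(a)=\tfrac12\bigl(\psi_k(a)+a_{k+1}\bigr)$ gives $P-\tfrac{a_0}{2}=\tfrac{\lambda}{4}\,P+\tfrac12\sum_{k=0}^{\infty}(\lambda/2)^{k+1}a_{k+1}=\tfrac{\lambda}{4}\,P+\tfrac12\bigl(Z(a)-a_0\bigr)$. Cancelling $\tfrac{a_0}{2}$ from both sides leaves the linear equation $\bigl(1-\tfrac{\lambda}{4}\bigr)P=\tfrac12 Z(a)$, hence $P=\tfrac{2}{4-\lambda}Z(a)$, which is exactly the asserted identity.

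As an independent check one can instead insert the closed form $\psi_k(a)=\sum_{j=0}^{k}a_j 2^{\,j-k-1}$ directly into $P$, interchange the two sums (again justified by the absolute convergence noted above), and evaluate the inner geometric series $\sum_{k\ge j}(\lambda/2)^k 2^{\,j-k-1}=2^{\,j-1}\sum_{k\ge j}(\lambda/4)^k=\tfrac{2^{\,j+1}}{4-\lambda}(\lambda/4)^j=\tfrac{2}{4-\lambda}(\lambda/2)^j$, so that $P=\tfrac{2}{4-\lambda}\sum_{j}a_j(\lambda/2)^j=\tfrac{2}{4-\lambda}Z(a)$. There is no genuine obstacle here: the only point requiring a word of justification is the interchange of summation (or, in the first approach, the index shift), and that is immediate from the uniform bound $\psi_k(a)<1$ and $\lambda<2$. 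The lemma is a bookkeeping identity, and I would present the recurrence-based argument as the main proof because it is shortest and matches the "recursive sums" framing of the subsection.
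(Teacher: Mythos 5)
Your argument is correct and is essentially the paper's own proof: both set up the linear equation $(1-\tfrac{\lambda}{4})P=\tfrac12 Z(a)$ from the recurrence $2\psi_{k+1}(a)=\psi_k(a)+a_{k+1}$ (the paper writes it as $2H=\tfrac{\lambda}{2}H+Z(a)$, which is the same manipulation). The convergence remarks and the direct sum-interchange check are harmless additions, not a different method.
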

\begin{proof}
Consider $H=\sum_{k=0}^{\infty} \left(\frac{\lambda}{2}\right)^{k} \psi_{k}(a)$ then

$$\begin{array}{ccc}
2H&=2\sum_{k=0}^{\infty} \left(\frac{\lambda}{2}\right)^{k} \psi_{k}(a)
&=2\psi_{0}(a) +\sum_{k=1}^{\infty} \left(\frac{\lambda}{2}\right)^{k} 2\psi_{k}(a)\\
&=a_{0} + \sum_{k=1}^{\infty} \left(\frac{\lambda}{2}\right)^{k} [\psi_{k-1}(a) + a_{k}]
&=a_{0} + \sum_{k=1}^{\infty} \left(\frac{\lambda}{2}\right)^{k} \psi_{k-1}(a)+ \sum_{k=1}^{\infty} \left(\frac{\lambda}{2}\right)^{k} a_{k}\\
&= \frac{\lambda}{2} \sum_{k=1}^{\infty} \left(\frac{\lambda}{2}\right)^{k-1} \psi_{k-1}(a)+ \sum_{k=0}^{\infty} \left(\frac{\lambda}{2}\right)^{k} a_{k}
&= \frac{\lambda}{2} \sum_{k=0}^{\infty} \left(\frac{\lambda}{2}\right)^{k} \psi_{k}(a)+ \sum_{k=0}^{\infty} \left(\frac{\lambda}{2}\right)^{k} a_{k}\\
&=\frac{\lambda}{2} H + Z(a).
\end{array}$$
Thus,
$H= \frac{2}{4-\lambda} Z(a),$
where $\displaystyle Z(a)= \sum_{k=0}^{\infty} \left(\frac{\lambda}{2}\right)^{k} a_{k}$ is the expansion in the bases $\frac{2}{\lambda}$ of the number $Z(a)$.
\end{proof}
\begin{lemma}\label{incresing repres} If $\lambda <1$ the function $Z: \Sigma \to [0,1]$  given by $\displaystyle Z(a)= \sum_{k=0}^{\infty} \left(\frac{\lambda}{2}\right)^{k} a_{k}$ is strictly increasing with respect to the lexicographical order. In particular, if $b>a$ then
$$ Z(b)-Z(a) \geq  \left(\frac{\lambda}{2}\right)^{n} \left( \frac{1-\lambda}{1- \frac{\lambda}{2}}\right),$$
where $n$ is the first digit where $a$ is different from $b$.
\end{lemma}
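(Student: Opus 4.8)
The plan is to compare the two series digit by digit, cancelling the common initial block and estimating the tail by a geometric series.

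First I would fix $a,b\in\Sigma$ with $b>a$ in the lexicographic order and let $n$ be the least index with $a_n\neq b_n$. Since the alphabet is $\{0,1\}$ and $b>a$, this forces $b_n=1$, $a_n=0$, while $a_k=b_k$ for every $k<n$. Hence the first $n$ terms of $Z(a)$ and $Z(b)$ cancel, and
$$Z(b)-Z(a)=\left(\frac{\lambda}{2}\right)^{n}+\sum_{k=n+1}^{\infty}\left(\frac{\lambda}{2}\right)^{k}(b_k-a_k).$$

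Next I would bound the tail from below using $b_k-a_k\geq-1$ and $\lambda/2<1$ to sum the geometric series:
$$\sum_{k=n+1}^{\infty}\left(\frac{\lambda}{2}\right)^{k}(b_k-a_k)\geq-\sum_{k=n+1}^{\infty}\left(\frac{\lambda}{2}\right)^{k}=-\left(\frac{\lambda}{2}\right)^{n+1}\frac{1}{1-\frac{\lambda}{2}}.$$
Combining the two displays gives
$$Z(b)-Z(a)\geq\left(\frac{\lambda}{2}\right)^{n}\left(1-\frac{\lambda/2}{1-\lambda/2}\right)=\left(\frac{\lambda}{2}\right)^{n}\frac{1-\lambda}{1-\frac{\lambda}{2}},$$
which is exactly the claimed lower bound. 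Since $\lambda<1$, the right-hand side is strictly positive, so $Z(b)>Z(a)$; this yields both strict monotonicity and the quantitative estimate.

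There is essentially no obstacle here: the only point requiring care is the bookkeeping in the geometric-series bound and noting that it is precisely the hypothesis $\lambda<1$ (not merely $\lambda<2$, which is needed for convergence) that makes $(1-\lambda)/(1-\lambda/2)>0$. I would also remark in passing that the same computation shows $Z$ is Hölder continuous on $\Sigma$, though this is not needed for the statement.
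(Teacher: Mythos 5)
Your proof is correct and follows essentially the same route as the paper's: cancel the common initial block, isolate the $(\lambda/2)^n$ term from the first differing digit, and bound the tail below by the geometric series $-\sum_{k=n+1}^\infty(\lambda/2)^k$, yielding the factor $(1-\lambda)/(1-\lambda/2)$. No issues.
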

\begin{proof}
Take $a=(i_{0}, ..., i_{n-1}, 0, a_{n+1}, i_{n+2}, ....) < b=(i_{0}, ..., i_{n-1}, 1, b_{n+1}, b_{n+2}, ....),$ then,
$$Z(b)-Z(a)= \left(\frac{\lambda}{2}\right)^{n} (1-0) + \sum_{k=n+1}^{\infty} \left(\frac{\lambda}{2}\right)^{k} (b_{k}- a_{k})\geq$$
$$ \geq \left(\frac{\lambda}{2}\right)^{n} - \sum_{k=n+1}^{\infty} \left(\frac{\lambda}{2}\right)^{k} = \left(\frac{\lambda}{2}\right)^{n} - \frac{(\frac{\lambda}{2})^{n+1}}{1- \frac{\lambda}{2}}= $$
$$ \left(\frac{\lambda}{2}\right)^{n}\left( 1 - \frac{\frac{\lambda}{2}}{1- \frac{\lambda}{2}} \right)= \left(\frac{\lambda}{2}\right)^{n} \left( \frac{1-\lambda}{1- \frac{\lambda}{2}}\right) >0$$
\end{proof}
We are going now to compute $\frac{\partial S}{\partial x}(x,a)$ for $x^{m}$ for $m =0,1,2$.\footnote{This potentials are actually defined in $\mathbb{R}$ because they are not continuous functions on $S^1$, but some combination of $1,x, x^2, ...$ allow us to build an 1-periodic function}\\
a) $A(x)= 1$\\
In that case, $S_{1}(x,a) = \sum_{k=0}^{\infty} \lambda^{k} 1= \frac{1}{1-\lambda},$
so $\frac{\partial S}{\partial x}(x,a)=0$.\\
b)  $A(x)= x$\\
In that case,

$S_{x}(x,a) = \sum_{k=0}^{\infty} \lambda^{k} \tau_{k,a} x,$ so $\frac{\partial S}{\partial x}(x,a)= \frac{1}{2} \sum_{k=0}^{\infty} \left(\frac{\lambda}{2}\right)^{k}= \frac{1}{2-\lambda}$.\\
c) $A(x)= x^2$\\
In that case,

$S_{x^2}(x,a) = \sum_{k=0}^{\infty} \lambda^{k} (\tau_{k,a} x)^{2},$

so $\frac{\partial S}{\partial x}(x,a)= \frac{1}{2} \sum_{k=0}^{\infty} \left(\frac{\lambda}{2}\right)^{k} 2 (\tau_{k,a} x)= \sum_{k=0}^{\infty} \left(\frac{\lambda}{2}\right)^{k} (\tau_{k,a} x)$.
Thus,
$$\begin{array}{cc}
\frac{\partial S}{\partial x}(x,a) &= \sum_{k=0}^{\infty} \left(\frac{\lambda}{2}\right)^{k}  \left[\frac{1}{2^{k+1}} x + \psi_{k}(a) \right]\\
&=\frac{x}{2} \sum_{k=0}^{\infty} \left(\frac{\lambda}{4}\right)^{k} + \sum_{k=0}^{\infty} \left(\frac{\lambda}{2}\right)^{k} \psi_{k}(a)\\
&=\frac{2}{4-\lambda} x  + \sum_{k=0}^{\infty} \left(\frac{\lambda}{2}\right)^{k} \psi_{k}(a)\\
\end{array}$$
Applying Lemma~\ref{quadsumderiv} we have
$\displaystyle \begin{array}{cc}
\frac{\partial S_{x^2}}{\partial x}(x,a) &=  \frac{2}{4-\lambda} x +\frac{2}{4-\lambda} Z(a).\\
\end{array}$
\begin{theorem}\label{angle S} If $A(x)= c_{0} + c_{1}x + c_{2}x^{2}$ is 1-periodic differentiable in $S^1 -\{0\}$ then $$\frac{\partial S}{\partial x}(x,a)= \left(  \frac{c_{1}}{2-\lambda} +  \frac{2 c_{2} }{4-\lambda} x \right) +\frac{2 c_{2}}{4-\lambda} Z(a).$$
Moreover, $A$ is twist if and only if $c_2 <0$.
\end{theorem}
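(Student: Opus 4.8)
<br>

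The goal is to prove Theorem~\ref{angle S}: for $A(x)=c_0+c_1x+c_2x^2$, we have
$$\frac{\partial S}{\partial x}(x,a)=\left(\frac{c_1}{2-\lambda}+\frac{2c_2}{4-\lambda}x\right)+\frac{2c_2}{4-\lambda}Z(a),$$
and moreover $A$ is twist if and only if $c_2<0$.

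The plan is to exploit the linearity of $S_{\lambda,A}(\cdot,a)$ in $A$ (noted earlier in the excerpt, just after equation~\eqref{valuefunction}). Since $A=c_0\cdot 1+c_1\cdot x+c_2\cdot x^2$, we get $S_A(x,a)=c_0 S_1(x,a)+c_1 S_x(x,a)+c_2 S_{x^2}(x,a)$, hence differentiating in $x$,
$$\frac{\partial S}{\partial x}(x,a)=c_0\,\frac{\partial S_1}{\partial x}(x,a)+c_1\,\frac{\partial S_x}{\partial x}(x,a)+c_2\,\frac{\partial S_{x^2}}{\partial x}(x,a).$$
The three summands were computed immediately before the theorem in parts (a), (b), (c) of the Formal Computations subsection: $\frac{\partial S_1}{\partial x}(x,a)=0$, $\frac{\partial S_x}{\partial x}(x,a)=\frac{1}{2-\lambda}$, and (using Lemma~\ref{quadsumderiv}) $\frac{\partial S_{x^2}}{\partial x}(x,a)=\frac{2}{4-\lambda}x+\frac{2}{4-\lambda}Z(a)$. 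Substituting these in and collecting terms yields exactly the claimed formula. This is essentially bookkeeping; there is no real obstacle here, only care with the constants $\frac{1}{2-\lambda}$ and $\frac{2}{4-\lambda}$.

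For the twist characterization I would argue directly from Definition~\ref{tui} together with the observation (stated in the excerpt) that twist for $S$ is equivalent to twist for $W$. Given $a<b$ in the lexicographic order, the formula gives
$$\frac{\partial S}{\partial x}(x,a)-\frac{\partial S}{\partial x}(x,b)=\frac{2c_2}{4-\lambda}\bigl(Z(a)-Z(b)\bigr).$$
By Lemma~\ref{incresing repres}, $Z$ is strictly increasing with respect to the lexicographic order, so $a<b$ forces $Z(a)-Z(b)<0$; since $\frac{2}{4-\lambda}>0$ for $\lambda\in(0,1)$, the sign of the difference $\frac{\partial S}{\partial x}(x,a)-\frac{\partial S}{\partial x}(x,b)$ is the sign of $-c_2$. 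Thus the twist inequality $\frac{\partial S}{\partial x}(x,a)-\frac{\partial S}{\partial x}(x,b)>0$ for all $a<b$ holds precisely when $c_2<0$, and conversely if $c_2\geq 0$ the inequality fails (for $c_2=0$ the difference is zero, violating strictness; for $c_2>0$ it has the wrong sign). This gives the equivalence.

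The only point deserving slight attention — the closest thing to an obstacle — is making sure the ``if and only if'' is stated cleanly: one direction is that $c_2<0$ implies twist, which is immediate from the sign computation and Lemma~\ref{incresing repres}; the other is the contrapositive, that $c_2\geq 0$ implies the twist condition fails. I would also remark in passing that the quantitative lower bound $Z(b)-Z(a)\geq(\lambda/2)^n\frac{1-\lambda}{1-\lambda/2}$ from Lemma~\ref{incresing repres} shows the derivative gap is bounded away from zero when the first disagreement digit is controlled, consistent with Lemma~\ref{angle} on the transversality angle; but this is not needed for the statement as written.
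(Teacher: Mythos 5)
Your proposal is correct and follows essentially the same route as the paper: decompose $A$ using linearity of $S$ in the potential, substitute the derivatives computed in parts (a), (b), (c) (with Lemma~\ref{quadsumderiv}), and deduce the twist equivalence from the strict monotonicity of $Z$ in Lemma~\ref{incresing repres}. Your sign-tracking in the twist direction is in fact cleaner than the paper's (which contains a typo, subtracting $\frac{\partial S}{\partial x}(x,a)$ from itself where $b$ is intended).
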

\begin{proof}
Using the notation $\displaystyle S_{A}(x,a) = \sum_{k=0}^{\infty} \lambda^{k} A(\tau_{k,a} x),$ one can easily  show that $S$ depends linearly of $A$. So if we have, $A(x)= c_{0} + c_{1}x + c_{2}x^{2}$, then
$$S_{A}(x,a) = c_{0} \cdot S_{1}(x,a)  + c_{1}S_{x  }(x,a)+ c_{2}S_{x^{2} }(x,a).$$
We also can compute $\frac{\partial S}{\partial x}(x,a)$,
$$\frac{\partial S}{\partial x}(x,a)= c_{0} \cdot 0 + c_{1} \frac{1}{2-\lambda} + c_{2} \left( \frac{2}{4-\lambda} x +\frac{2}{4-\lambda} Z(a)\right),$$
or
$$\frac{\partial S}{\partial x}(x,a)= \left(  \frac{c_{1}}{2-\lambda} +  \frac{2 c_{2} }{4-\lambda} x \right) +\frac{2 c_{2}}{4-\lambda} Z(a).$$
Moreover,
$$\frac{\partial S}{\partial x}(x,a) - \frac{\partial S}{\partial x}(x,a)= \frac{2 c_{2}}{4-\lambda} (Z(a) - Z(b)).$$
Remember that, if $a>b$ then $Z(a) - Z(b) >0$ (by Lemma~\ref{incresing repres}). In this way, $\frac{\partial S}{\partial x}(x,a) - \frac{\partial S}{\partial x}(x,a)<0$, if and only, if $c_2 <0$.
\end{proof}

\medskip

Suppose  that $A$ is such that the $\lambda$-maximizing probability has period $2$ and the subaction $b_\lambda$ is the envelope
of $S(x,(0,1,0,1,...))$ and $S(x,(1,0,1,0...))$. In this case, in order to get  explicit examples of the associated $b_\lambda$,  it is quite useful to have the explicit expression for the associated pre-orbits.

a) The expression of the preimages using $(0,1,0,1...)$. These are $\frac{1}{2}\,,\frac{1}{4}\,,\frac{5}{8}\,,\frac{5}{16}\,....$
We say that $\frac{1}{2}$ is the $0$ level, \,$\frac{1}{4}$ is the $1$ level\,, $\frac{5}{8}$ is the $2$ level, and so on.
One can show that, if $m$ is even level, then
$ \frac{2^{m+2}-1}{3\,\, 2^{m+1}}.$
If $m$ is in odd level the value it is the last one divided by $2$.

b) The expression of the preimages using $(1,0,1,0,...)$. These are $\frac{1}{2}\,,\frac{3}{4}\,,\frac{3}{8}\,,\frac{11}{16}\,....$
We say $\frac{1}{2}$ is the $0$ level, \,$\frac{3}{4}$ is the $1$ level\,, $\frac{3}{8}$ is the $2$ level, and so on.
One can show that, if $m$ is in odd level, then
$\frac{2^{m+2}+1}{3\,\, 2^{m+1}}.$
If $m$ is in even level the value it is the next one multiplied  by $2$.

\subsection{A special quadratic case}  \label{humaqui}
As an example let us consider $A(x)= -(x -1/2)^2=-1/4 + x - x^2 $, then in this case $c_{1}=1$ and $c_{2}=-1$
$$\frac{\partial S}{\partial x}(x,a)= \left(  \frac{1}{2-\lambda} -  \frac{2 }{4-\lambda} x \right) -\frac{2 }{4-\lambda} Z(a),$$
in particular
$$S(x,a)= S(0,a)+ \left(  \frac{1}{2-\lambda} x -  \frac{1 }{4-\lambda} x^2 \right) -\frac{2 x }{4-\lambda} Z(a).$$
Thus,
$$\Delta(x,a,b)= S(x,a)- S(x,b)= \Delta(0,a,b) -\frac{2 x }{4-\lambda} (Z(a) -Z(b)) $$
$$\Delta'(x,a,b)= S'(x,a)- S'(x,b)=-\frac{2 }{4-\lambda} (Z(a) -Z(b)). $$
This proves that $A(x)= -(x -1/2)^2=-1/4 + x - x^2 $ is twist. Indeed, if $a>b$ then $Z(a) > Z(b)$ and so $\Delta'(x,a,b) <0$. Note that when $\lambda\to 1$ the angles remain bounded away from zero.
\subsection{Crossing points for quadratic potentials}
For the case $A(x)= -(x -1/2)^2  $ we can compute explicitly the crossing points
$ x_{ab}=x $ or equivalently $ \Delta(x,a,b)=0,$
that is,
$$x_{ab} = \frac{4-\lambda}{2} \frac{\Delta(0,a,b)}{Z(a) -Z(b)}.$$

\subsection{The explicit $\lambda$-calibrated subaction for $A(x)= -(x -1/2)^2$} \label{morep}
Remember that
$ Z(a)= \sum_{k=0}^{\infty} \left(\frac{\lambda}{2}\right)^{k} a_{k}$.
Note that $Z(\,(01)^\infty )= 0 + \frac{\lambda}{2} + 0 +  (\frac{\lambda}{2} )^3 +0 +...=\frac{\lambda}{2} \frac{4}{4- \lambda^2}.$
Moreover, $Z(\,(10)^\infty )= 1+  0 + (\frac{\lambda}{2})^2 + 0 +  (\frac{\lambda}{2} )^4 +0 +...=\frac{4}{4- \lambda^2}.$
Note that
$$S(x,(01)^\infty  )= S(0,(01)^\infty )+ \left(  \frac{1}{2-\lambda} x -  \frac{1 }{4-\lambda} x^2 \right) -\frac{2 x }{4-\lambda} Z((01)^\infty )=$$
$$S(0, (01)^\infty)+ \left(  \frac{1}{2-\lambda} x -  \frac{1 }{4-\lambda} x^2 \right) -\frac{2 x }{4-\lambda} \frac{\lambda}{2} \frac{4}{4- \lambda^2}=$$
$$S(0, (01)^\infty)+ \left(  \frac{1}{2-\lambda} x -  \frac{1 }{4-\lambda} x^2 \right) -\frac{4\,\lambda\,  x }{(4-\lambda)\,(4- \lambda^2)} =$$
$$S(0, (01)^\infty)+ \frac{(8-2\, \lambda-\lambda^2)\,  x }{(4-\lambda)\,(4- \lambda^2)}  -  \frac{1 }{4-\lambda} x^2   ,$$
and
$$S(x,(10)^\infty  )= S(0,(10)^\infty )+ \left(  \frac{1}{2-\lambda} x -  \frac{1 }{4-\lambda} x^2 \right) -\frac{2 x }{4-\lambda} Z((10)^\infty )=$$
$$S(0, (10)^\infty)+ \left(  \frac{1}{2-\lambda} x -  \frac{1 }{4-\lambda} x^2 \right) -\frac{2 x }{4-\lambda}  \frac{4}{4- \lambda^2}=$$
$$S(0, (10)^\infty)+ \left(  \frac{1}{2-\lambda} x -  \frac{1 }{4-\lambda} x^2 \right) -\frac{8\,  x }{(4-\lambda)\,(4- \lambda^2)} =$$
$$S(0, (10)^\infty)+ \frac{(2\,\lambda-\lambda^2)\,  x }{(4-\lambda)\,(4- \lambda^2)}  -  \frac{1 }{4-\lambda} x^2   .$$
The value $S(0, (10)^\infty)$ will be explicitly obtained in the next proposition.

Observe that
$S(1,(01)^\infty  ) =S(0,(01)^\infty)    + \frac{(4\,-2\,\lambda) }{(4-\lambda)\,(4- \lambda^2)},$
and
$S(1,(10)^\infty  ) =S(0,(10)^\infty)    + \frac{(2\,\lambda-4) }{(4-\lambda)\,(4- \lambda^2)}.$
By symmetry (see next proposition) we have that $S(1/2,(10)^\infty  )= S(1/2,(01)^\infty  )$, and therefore
$$ S(1/2,(01)^\infty  )= S(0,(01)^\infty  ) + \frac{6+ \lambda}{4\, (4-\lambda)\, (2+\lambda)}=$$
\begin{equation} \label{esti}S(1/2,(10)^\infty  )= S(0,(10)^\infty  ) -  \frac{2}{4\, (4-\lambda)\, (2+\lambda)}.\end{equation}
In this way
$$  S(0,(10)^\infty  ) = S(0,(01)^\infty  ) + \frac{6+ \lambda\,+2}{4\, (4-\lambda)\, (2+\lambda)}.$$
\begin{example}\label{ddd} Here we consider $\lambda=0.51$ and a periodic and continuous standard twist potential on the circle
$A(x)=-(x-0.5)^2$ that has the maximizing measure in a period two orbit, the superior envelope has two differentiable pieces since the unique turning point $u=0.5$ pre-periodic.
\begin{figure}[h!]
  \centering
  \includegraphics[scale=0.5,angle=0]{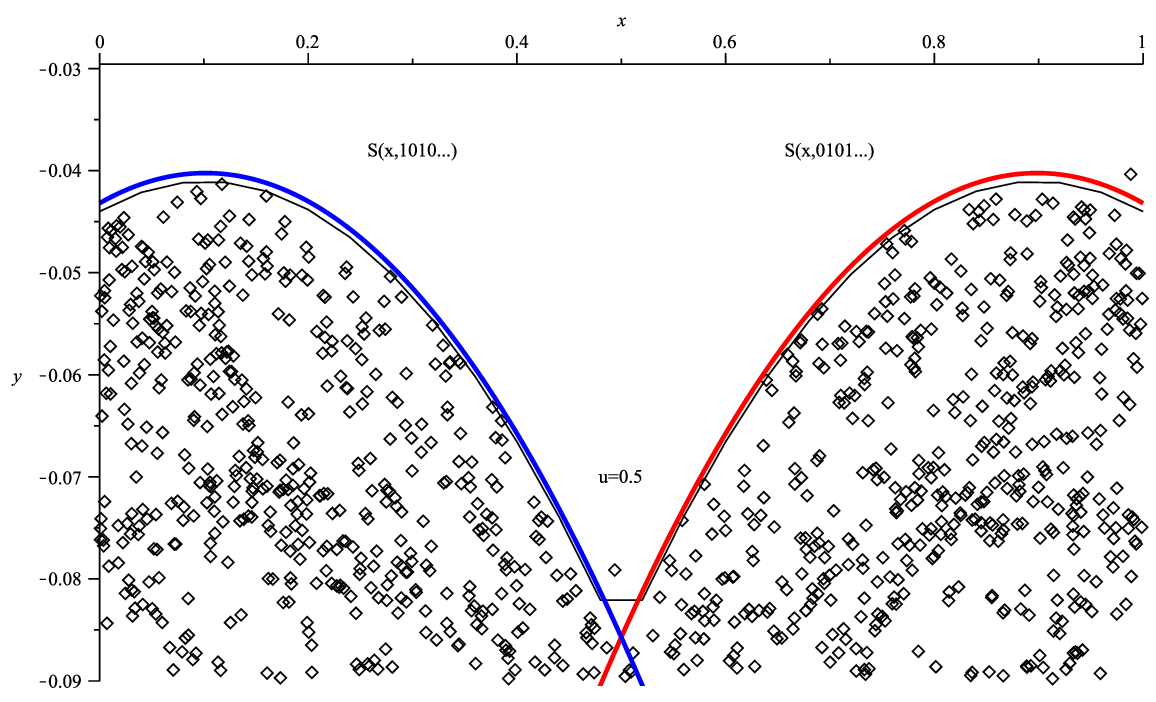}\\
  \caption{ }\label{figpotquad2piecescircle}
\end{figure}
In the figure above, the dots are the iteration of $F$, the curves are $ S(x,10101...)$ and $S(x,01010...)$. The curve dislocated is the graph of $b(x)$ computationally obtained as the superior envelope. The formal proof is given in the next.
\end{example}
\begin{proposition}
Denote by $b:S^1 \to \mathbb{R}$ the function such that for $0 \leq x\leq 1/2$, we have $b(x)=  S(x,(10)^\infty  )$, and for
$1/2\leq x\leq 1$, we have $b(x)=  S(x,(01)^\infty  )$.
Then, $b$ is a $\lambda$-calibrated subaction for $A(x)= -(x -1/2)^2$, that is, for any $x\in S^1$, $ b(x) = \max_{i} \{ \lambda \, b(\tau_{i} x ) + A(\tau_{i} x)\}= $
\begin{equation} \label{expsub} \max \{ \lambda \,b(x/2) + A(x/2),  \lambda\, b(x/2\,+\,\,1/2) + A(x/2\,+\,\,1/2)\,\} .
\end{equation}
 Moreover, $b(0) =  \frac{2\, \lambda}{4\, (4-\lambda)\, (2+\lambda) \,(\lambda-1)}$ and this provides the explicit expression of $b$.
\end{proposition}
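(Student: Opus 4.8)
The plan is to obtain the calibration property as an immediate instance of Theorem~\ref{symmet}, and then to determine the single free additive constant by evaluating the calibration identity at the point $x=0$.

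First I would note that $A(x)=-(x-1/2)^2$ is symmetric, $A(1-x)=A(x)$, and twist: by Theorem~\ref{angle S} with $c_{1}=1$, $c_{2}=-1<0$ (see also subsection~\ref{humaqui}). Consequently Theorem~\ref{symmet} applies word for word, and the piecewise function $b$ --- equal to $S(x,(10)^\infty)$ on $[0,1/2]$ and to $S(x,(01)^\infty)$ on $[1/2,1]$ --- is a $\lambda$-calibrated subaction for $A$, i.e.\ it satisfies~(\ref{expsub}). This settles the first assertion essentially for free. I would also extract from the proof of Theorem~\ref{symmet} the two facts needed below: the continuity relation $S(1/2,(10)^\infty)=S(1/2,(01)^\infty)$, so that $b(1/2)$ is unambiguous, and the strict ordering $S(x,(10)^\infty)>S(x,(01)^\infty)$ for $x<1/2$.

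Next I would evaluate~(\ref{expsub}) at $x=0$. Here $\tau_{0}(0)=0$ and $\tau_{1}(0)=1/2$, and $0*(10)^\infty=(01)^\infty$, so the $i=0$ branch gives $\lambda\,b(\tau_{0}0)+A(\tau_{0}0)=\lambda\,S(0,(10)^\infty)+A(0)=S(0,(01)^\infty)$, which is strictly below $b(0)=S(0,(10)^\infty)$ by the strict ordering just recalled (equivalently, by Lemma~\ref{invariance}, since the first symbol of the optimal sequence $(10)^\infty$ must realize the maximum). Hence the maximum in~(\ref{expsub}) at $x=0$ is attained at $i=1$, and since $A(1/2)=0$ this reads $b(0)=\lambda\,b(1/2)$. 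On the other hand $b(1/2)=S(1/2,(10)^\infty)$, and the explicit expansion of $S(x,(10)^\infty)$ computed above is exactly~(\ref{esti}), that is $b(1/2)=b(0)-\frac{2}{4(4-\lambda)(2+\lambda)}$. Substituting and solving the linear equation $b(0)=\lambda\left(b(0)-\frac{2}{4(4-\lambda)(2+\lambda)}\right)$ yields $b(0)=\frac{2\lambda}{4(4-\lambda)(2+\lambda)(\lambda-1)}$. Finally, combining $b(0)=S(0,(10)^\infty)$ with the previously derived relation $S(0,(01)^\infty)=S(0,(10)^\infty)-\frac{8+\lambda}{4(4-\lambda)(2+\lambda)}$ fixes both constants in the two quadratic formulas for $S(x,(10)^\infty)$ and $S(x,(01)^\infty)$ displayed earlier, so $b$ is given explicitly on each of $[0,1/2]$ and $[1/2,1]$.

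I do not anticipate a real difficulty: once Theorem~\ref{symmet} is in hand the whole argument is a short assembly of already-established formulas. The only step needing a word of care is the identification of the realizing inverse branch at $x=0$ as $\tau_{1}$ (not $\tau_{0}$), which is the content of Lemma~\ref{invariance} and is in any case checked directly by the strict inequality $S(0,(10)^\infty)>S(0,(01)^\infty)$.
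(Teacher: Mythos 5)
Your proof is correct. The only substantive difference from the paper's argument is that you obtain the calibration identity (\ref{expsub}) by citing Theorem~\ref{symmet} (after checking symmetry and, via Theorem~\ref{angle S} or subsection~\ref{humaqui}, the twist condition), whereas the paper re-derives it for this specific potential by explicit computation: it verifies $\lambda\, b(x/2)+A(x/2)=S(x,(01)^\infty)$ directly from the closed quadratic formulas for $S(\cdot,(01)^\infty)$ and $S(\cdot,(10)^\infty)$, and re-proves the symmetry relation $S(x,(10)^\infty)=S(1-x,(01)^\infty)$ (equation (\ref{ee7})) to handle the $\tau_1$ branch. Your route is more economical and avoids duplicating the content of Theorem~\ref{symmet}; the paper's explicit version has the side benefit of confirming the closed-form expressions that are then needed anyway to extract $b(0)$. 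The computation of $b(0)$ is essentially identical in both arguments: each reduces to $b(0)=\lambda\, b(1/2)$ and invokes (\ref{esti}). Your identification of the realizing branch at $x=0$, via the strict inequality $S(0,(10)^\infty)>S(0,(01)^\infty)$ (equivalently Lemma~\ref{invariance}), is if anything more transparent than the paper's, which resolves the maximum by implicitly comparing $\frac{2\lambda}{4(4-\lambda)(2+\lambda)}$ with $1/4$.
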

\begin{proof}
Note that for a given $x$ we have
$$ \lambda \,b(x/2) + A(x/2)= \lambda S(x/2,(10)^\infty  ) + (-1/4 + x/2 - x^2/4)
=$$
$$ \lambda \,[\,S(0, (10)^\infty)+ \frac{(2\,\lambda-\lambda^2)\,  x }{2\, (4-\lambda)\,(4- \lambda^2)}  -  \frac{1 }{4(4-\lambda)} x^2 ] +  (-1/4 + x/2 - x^2/4)=$$
$$ \lambda S(0, (10)^\infty)+ \lambda \frac{(2\,\lambda-\lambda^2)\,  x }{2\, (4-\lambda)\,(4- \lambda^2)}  - \lambda  \frac{1 }{4(4-\lambda)} x^2 +  (-1/4 + x/2 - x^2/4)=$$
$$  \,\,S(0, (01)^\infty)\,- A(0) + \lambda \frac{(2\,\lambda-\lambda^2)\,  x }{2\, (4-\lambda)\,(4- \lambda^2)} \,\,\, - \,\,$$
$$\lambda  \frac{1 }{4(4-\lambda)} x^2 \,+  (-1/4 + x/2 - x^2/4)=$$
$$  \,\,S(0, (01)^\infty)\, + \lambda \frac{(2\,\lambda-\lambda^2)\,  x }{2\, (4-\lambda)\,(4- \lambda^2)}  - \lambda  \frac{1 }{4(4-\lambda)} x^2 \,+   x/2 - x^2/4=$$
$$  \,\,S(0, (01)^\infty)\, + \frac{(-\,\lambda^2 - 2\,\lambda+ 8)\,  x }{\, (4-\lambda)\,(4- \lambda^2)}  -   \frac{x^2 }{(4-\lambda)}  \, =S(x, (01)^\infty),$$
As $A$ is symmetric we claim that  $S(x,(10)^\infty)= S((1-x),(01)^\infty).$

Indeed,
$$ S(x,(10)^\infty) = \sum_{k=0} \lambda^k A(\,\tau_{a_k}\circ \tau_{a_{k-1}}\circ\,...\, \circ\tau_{a_0}\, ( x)  \,) =$$
$$   A(\,\tau_{1} (x)) + \lambda\, A(\,\tau_{0 }\circ \tau_{1}(x) )+  \lambda^2\, A(\,\tau_{1 }\circ \tau_{0 }\circ \tau_{1} (x))+...=$$
$$   A(\,(x+1)/2) + \lambda\, A(\,\frac{1}{2}((x+1)/2) )+  \lambda^2\, A(\,\tau_{1 } (\frac{1}{2}((x+1)/2) )))+...=$$
$$   A(\,(x+1)/2) + \lambda\, A(\,\frac{1}{2} + (x/4\,-1/4) )+  \lambda^2\, A(\,\tau_{1 } (\frac{1}{2}((x+1)/2) )))+...=$$
$$   A(\,1/2 - x) + \lambda\,  A(\,\frac{1}{2} - (x/4\,-1/4) )+  \lambda^2\, A(\,\frac{(\frac{1}{2}((x+1)/2) )+1 }{2} ))+...=$$
$$   A(\,\tau_{0} (1-x)) + \lambda\, A(\,\tau_{1 }\circ \tau_{0}(1-x) )+  \lambda^2\, A(\,\tau_{0 }\circ \tau_{1 }\circ \tau_{0} (1-x))+...= $$
\begin{equation} \label{ee7} S((1-x),(01)^\infty).
\end{equation}

Therefore, $b(x)=b(1-x)$.
Moreover,  $S(1/2,(10)^\infty)= S(1/2,(01)^\infty).$
Using this symmetry we get
$ \lambda \,b(x/2+1/2) + A(x/2+1/2)= S( x,(10)^\infty )$
from (\ref{ee7}).
From the above it follows (\ref{expsub}).
Note that from (\ref{esti}) we have
$$ b(0) = \max \{ \lambda \, b(0 ) + A(0 ),   \lambda \, b(1/2 ) + A(1/2 ) \}= $$
$$ \max \{ \lambda \, b(0 ) - \,1/4, ,\,\  \lambda \, b(1/2 )  \}=$$
$$ \max \{ \lambda \,  S(0,(10)^\infty  ) - \,1/4,  \,\, \lambda \,  S(1/2,(10)^\infty  )  \}=$$
$$ \max \{ \lambda \,  S(0,(10)^\infty  ) - \,1/4, \,\,  \lambda \,  S(0,(10)^\infty  ) -  \frac{2\, \lambda}{4\, (4-\lambda)\, (2+\lambda)}  \}=$$
$$ \max \{ \lambda \,  b(0) - \,1/4, \,\,  \lambda \,  b(0) -  \frac{2\, \lambda}{4\, (4-\lambda)\, (2+\lambda)}  \}=  \lambda \,  b(0) -  \frac{2\, \lambda}{4\, (4-\lambda)\, (2+\lambda)} .$$
In this way $ S(0,(10)^\infty  ) = b(0) =  \frac{2\, \lambda}{4\, (4-\lambda)\, (2+\lambda) \,(\lambda-1)}.$
\end{proof}
\section{Worked examples and computer simulations} \label{WE}
In the simulations we consider  the function $S: (S^1, \{1,2,..,d\}^\mathbb{N}) \to \mathbb{R}$ given by
$S(x,a) = \sum_{k=0} \lambda^k A(\,(\tau_{a_k}\circ \tau_{a_{k-1}}\circ\,...\, \circ\tau_{a_0})\, (x)  \,) ,$ and, $a=(a_0,a_1,a_2,...).$ The dynamics is defined by the inverse branches of $2x {\rm mod} 1$, that is $\tau_0=0.5 x$, $\tau_1=0.5x+0.5$ , $A(x)$ is a potential and  $\lambda=0.51$.
We will build examples where $a=(a_0,a_1,a_2,...)$ is truncated  in $a_7$, and the dots represents the iteration of typical orbits by $F(x,s) = ( T(x) , \lambda \, s + A(x)), \;(x,s)\in S^1 \times \mathbb{R}$ producing a picture of the superior envelope of the attractor.
\begin{example}\label{aaa} Here we consider a periodic and continuous potential on the circle  $A(x)=-(x-0.5)^2 + \varepsilon \psi(x) - drift$ for $\varepsilon=0.05$, $drift=0.2$ and $\psi(x)=(x-x^2) (1+3\,x+9/2\,{x}^{2}+9/2\,{x}^{3}+{\frac {27}{8}}\,{x}^{4}+{\frac {81}{40}}\,{x}^{5})$. Since,  $-(x-0.5)^2$ is twist and has the maximizing measure in a period two orbits the same is true for $A$, but in this case, the superior envelope has three differentiable pieces and turning points $u=0.21...$ and $v=0.60...$.
\begin{figure}[h!]
  \centering
  \includegraphics[scale=0.5,angle=0]{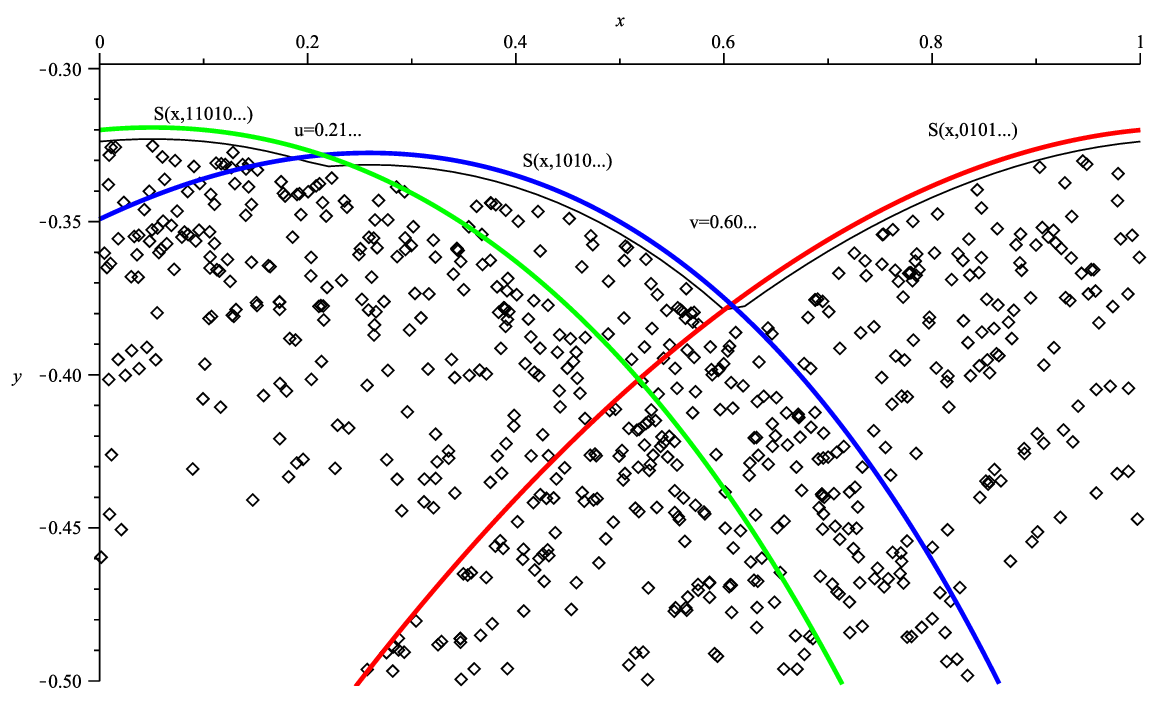}\\
  \caption{ }\label{figpotquad3piecescircle}
\end{figure}
In the figure above, the dots are the iteration of $F$, the curves are $ S(x,11010...), S(x,10101...)$ and $S(x,01010...)$. The curve dislocated is the graph of $b(x)$ computationally obtained as the superior envelope:
$$b(x):=
\left\{
  \begin{array}{lcl}
    S(x,110101....)&, & 0< x \leq u\\
    S(x,101010....)&, & u< x \leq v\\
    S(x,010101....)&, & v< x \leq 1
  \end{array}
\right..
$$
\end{example}
\begin{example}\label{bbb} Here we consider a periodic and continuous potential on the circle
$$A(x)=\left\{
  \begin{array}{ll}
    6x-3, & x<1/2 \\
    -6x+3, & x \geq 1/2
  \end{array}
\right.
$$ that is not twist but in this case, the superior envelope has two differentiable pieces since the unique turning point is pre-periodic according to Corollary~\ref{sisi}.
\begin{figure}[h!]
  \centering
  \includegraphics[scale=0.5,angle=0]{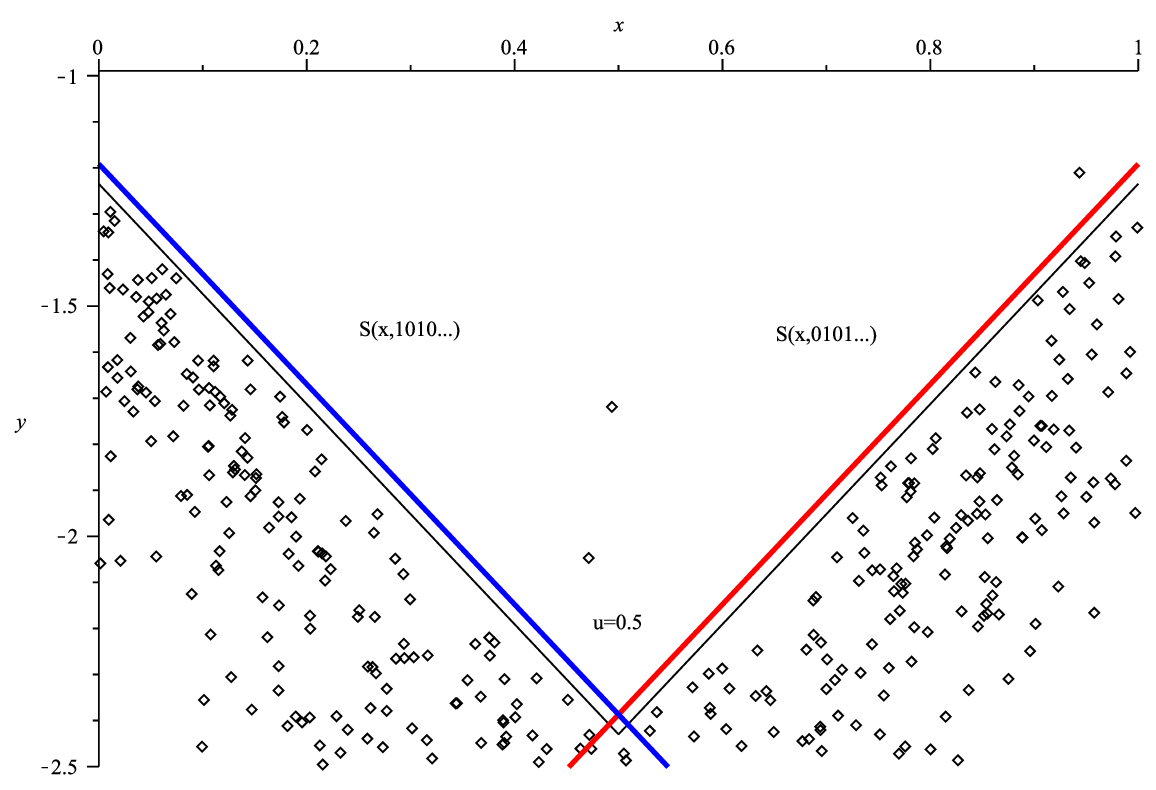}\\
  \caption{ }\label{figpottent2piecescircle}
\end{figure}
In the figure above, the dots are obtained by  the iteration of $F$ in an initial point and the curves are the graphs of $ S(x,10101...)$ and $S(x,01010...)$. The curve slightly  dislocated is the graph of $b(x)$ computationally obtained as the superior envelope.
\end{example}
\begin{example}\label{ccc} Here we consider a periodic and differentiable potential on the circle
$A(x)=-1/2-1/2\,\cos \left( 2\,\pi \,x \right)$ that is not necessarily twist but  in this case, the superior envelope has two differentiable pieces since the unique turning point is pre-periodic according to Corollary~\ref{sisi}.
\begin{figure}[h!]
  \centering
  \includegraphics[scale=0.5,angle=0]{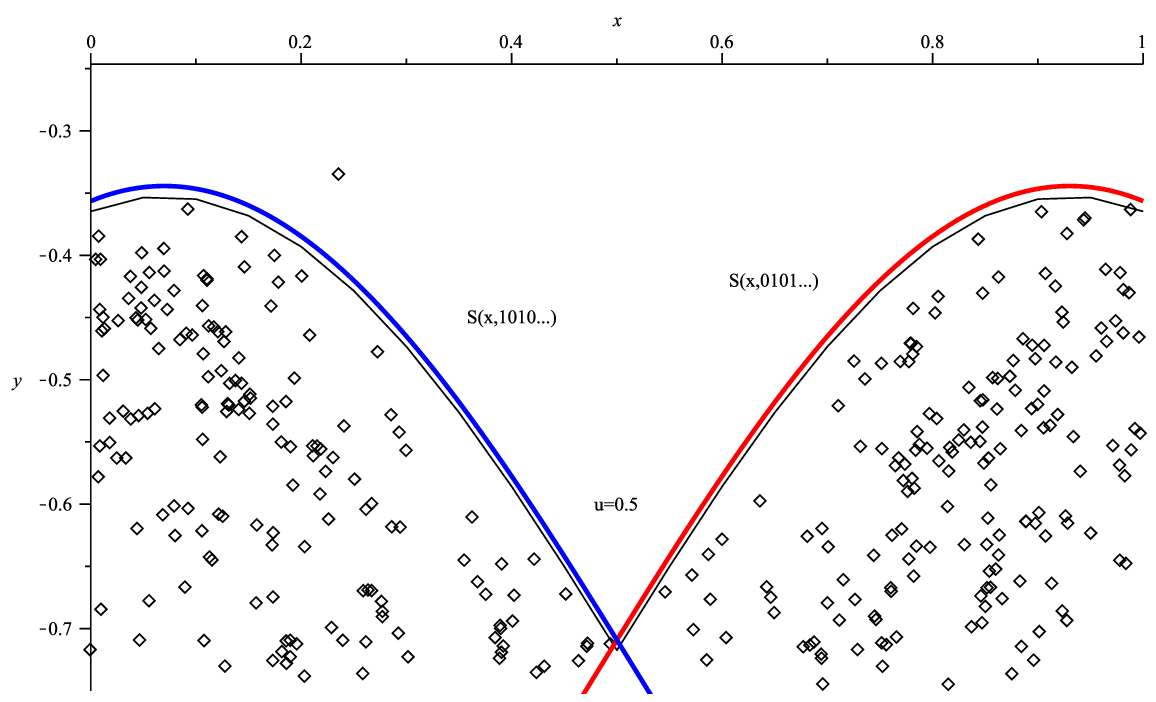}\\
  \caption{ }\label{figpotcos2piecescircle}
\end{figure}
In the figure above, the dots are the iteration of $F$ in an initial point and the curves are defined by $ S(x,10101...)$ and $S(x,01010...)$. The curve slightly  dislocated is the graph of $b(x)$ computationally obtained as the superior envelope.
\end{example}
\section{Ergodic Transport}\label{ET}
In this section $A$ is assumed to be just Lipschitz. Following the notation of  section \ref{sec2} we point out that: given $x=x_0$, there exists a sequence $x_k\in S^1$,  $k\in \mathbb{N}$, such that
$b(x_{k-1}) - \lambda b( \tau_{i_k} (x_k))-  A( \tau_{i_k} (x_k))=0.$
One can consider the probability $m_n=\sum_{j=0}^{n-1}\, \frac{1}{n} \,\delta_{\sigma^j(a)}$, where $\sigma$ is the shift, and, $a=a(x_0)$ is optimal for $x_0$.  We define  the probability $\mu_{\lambda}^*$ in $\{1,2,..,d\}^\mathbb{N}$, as any weak limit of a convergent subsequence $m_{n_k}$, $k \to \infty$ (which will be $\sigma$ invariant).
\begin{definition}
 We call
$\mu_{\lambda}^*$ a $\lambda$-dual probability for $A.$
\end{definition}
Note that from Proposition \ref{puxa} if $z$ is in the support of the $\lambda$-maximizing probability $\mu_\lambda$, then $a(z)$ can be taken as  periodic
 orbit for $\sigma$. In this case following the above reasoning we can produce a certain $\mu_{\lambda}^*$ which has support in a periodic orbit.

Consider a fixed $\overline{x}\in S^1$.
Remember that we denote
$$A^* (a) = [\lambda S(\overline{x}, \sigma(a))- S(\overline{x}, a)],$$
and in this way we get that for any $(x,a)$
$A^* (a) = A(\tau_{a_0}(x))+ [\lambda W(\tau_{a_0} (x), \sigma(a)) - W(x,a)],$
where
$W(x,a)= S(x,a) - S(\overline{x},a).$
We called such $W$ the $\lambda$-involution kernel for $A$.
We called $A^*$ is the $\lambda$-dual potential of $A$.
The main strategy is to get results for $A$ from properties of $A^*$. This is similar to the approach via  primal and dual problems  in Linear Programming.
Note that $W$ depends on the $\overline{x}$ we choose. Therefore,  $A^*=A^*_{\overline{x}}$ depends of the $\overline{x}$.
If we consider another base point $x_1$ instead $\overline{x}$, in order to get a different $W_1(x,a)= S(x,a) - S(x_1,a)$, then one can show  that the corresponding $A_1^*$ (to $A$ and $W_1$) satisfies
$ A_1^*= A^* + \lambda\,( g \circ \sigma) - g,$
for some continuous  $g$.
Note that $W- W_1$ just depends on $a$.

For the dual problem it will be necessary to consider  the following   problem: finding a function
$b^*=b_\lambda^*$ which satisfies for all $a\in \Sigma$
$$  \lambda b^*(a) = \max_{\sigma(c)=a} \{ \, b^*(c) + A^*(c)\}.$$
In fact one can do more, it is possible to  find a continuous function $b^*$  that solves  $ \lambda b^* (\sigma(c))= b^* (c) + A^* (c),\, \forall c \in \Sigma.$

Just take, as in \cite{BS},
$b^* (c) = -\sum_{j=0}^\infty \lambda^j \,A^* ( \sigma^j (c) )=$
$-\sum_{j=0}^\infty \lambda^j \, [\lambda S(\overline{x}, \sigma^{j+1} (c))- S(\overline{x}, \sigma^j (c))]= -S(\overline{x},c).$
In this case the corresponding rate function in the dual problem $R^*(c)= \lambda b^* (\sigma(c))- b^* (c) - A^* (c)$ is constant equal zero. This situation is quite different from the analogous dual problem in \cite{LOS}.
\begin{definition}
We call {\bf $b^*_\lambda$ the dual $\lambda$-calibrated subaction.}
\end{definition}
We assume, without lost of generality, that $A>0$. Then, $b>0$.
It is natural to consider the sum $\sum R^* (\sigma^n)(z)$ in the dual problem (see \cite{BLT}, \cite{LOS} and  \cite{CLO}) but now this sum is zero.
The role of the dual subactions $V$ and $V^*$ of \cite{LOS} are now played by $b$ and $b^*$, which are, respectively, the $\lambda$-calibrated subactions for $A$ and $A^*$.
Note that for all $(x,a)$
$(b^* +  b - W)(x,a) = -S(\overline{x}, a) + b(x) + S(\overline{x}, a) - S(x,a)=$
  $b(x)  - S(x,a)\geq 0.$
If $a$ is a realizer for $x$, then $(b^* + b - W)(x,a)=0$.
Given $A$ (and, a certain choice of $A^*$ and $W$) the next result  claims that the dual of $R$ is $R^*$ (which is constant equal zero), and the corresponding involution kernel is
$(b^* + b - W).$
\begin{proposition}
$$ R(\tau_{w}x)= (b^* + b - W)(x,w) - \lambda (b^* + b - W)( \tau_{w}x , \sigma(w)) .$$
\end{proposition}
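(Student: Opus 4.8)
The plan is to reduce the identity to two facts already in the excerpt: the formula $(b^*+b-W)(x,a)=b(x)-S(x,a)$ displayed just before the statement, and the skew-product relation $S(x,a)=A(\tau_{a_0}(x))+\lambda\,S(\tau_{a_0}(x),\sigma(a))$. With these two ingredients the proof is a short computation.

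Write $w=(w_0,w_1,\dots)$ and, following the abuse of notation in the statement, put $\tau_w x:=\tau_{w_0}(x)$. Applying the first formula to the pair $(x,w)$ and to $\mathbb{T}^{-1}(x,w)=(\tau_w x,\sigma(w))$ gives
$$(b^*+b-W)(x,w)-\lambda\,(b^*+b-W)(\tau_w x,\sigma(w))=\big(b(x)-S(x,w)\big)-\lambda\,\big(b(\tau_w x)-S(\tau_w x,\sigma(w))\big).$$
Note that the auxiliary base point $\overline{x}$ defining both $W$ and $b^*$ has already cancelled here, so the right-hand side — and hence the asserted identity — is independent of that choice.

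Next I would insert the skew-product relation $S(x,w)=A(\tau_w x)+\lambda\,S(\tau_w x,\sigma(w))$ into the right-hand side above. The terms $-\lambda\,S(\tau_w x,\sigma(w))$ and $+\lambda\,S(\tau_w x,\sigma(w))$ cancel, leaving $b(x)-A(\tau_w x)-\lambda\,b(\tau_w x)$. Since $\tau_{w_0}$ is an inverse branch of $T$ we have $T(\tau_w x)=x$, so $b(x)=b(T(\tau_w x))$, and the expression becomes $b(T(\tau_w x))-\lambda\,b(\tau_w x)-A(\tau_w x)$, which is exactly $R(\tau_w x)$ by the definition of the rate function. This proves the proposition.

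There is no genuine obstacle; the only points to watch are bookkeeping: the cancellation of $\overline{x}$, the identification $\tau_w=\tau_{w_0}$, and the sign convention for $R$, which the identity pins down as the non-negative quantity $b\circ T-\lambda b-A$ appearing in the $\lambda$-cohomological inequality $b(T(z))-\lambda b(z)-A(z)\ge 0$. As a consistency check: if $w$ is a realizer for $x$ then, by Lemma~\ref{invariance}, $\sigma(w)$ is a realizer for $\tau_w x$, so both $(b^*+b-W)$ terms vanish and the identity degenerates to $R(\tau_w x)=0$, in agreement with $R\ge 0$ and with $R$ vanishing along optimal orbits.
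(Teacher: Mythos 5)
Your proof is correct, but it follows a different computational path from the paper's. The paper expands $p=b^*+b-W$ termwise and combines three identities: the dual calibration equation $\lambda b^*(\sigma(w))-b^*(w)=A^*(w)$, the definition $R(\tau_w x)=b(T(\tau_w x))-\lambda b(\tau_w x)-A(\tau_w x)$, and the $\lambda$-coboundary equation $A^*(w)=A(\tau_{w_0}x)+\lambda W(\tau_{w_0}x,\sigma(w))-W(x,w)$; the $A^*$ terms then cancel. You instead first collapse $p(x,w)$ to $b(x)-S(x,w)$ (the identity the paper records just before the proposition) and then need only the cocycle relation $S(x,w)=A(\tau_{w_0}x)+\lambda S(\tau_{w_0}x,\sigma(w))$. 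The two arguments are of course equivalent, since $b^*=-S(\overline{x},\cdot)$ and $W=S-S(\overline{x},\cdot)$ are exactly what make both sets of identities hold; but your version is shorter, makes the cancellation of the base point $\overline{x}$ explicit, and pins down the correct sign convention $R=b\circ T-\lambda b-A\ge 0$ (the paper's Section 2 definition of $R$ has the opposite sign, which would break the proposition; the sign you use is the one restated in the Ergodic Transport section). What the paper's longer route buys is that it only invokes the coboundary and dual-calibration equations, so it would survive in a setting where $W$, $A^*$, $b^*$ are not given by the explicit $S$-formulas. Your closing consistency check via the invariance lemma is also sound.
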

\begin{proof}
We know that
$\lambda
  b^*(\sigma(w)) - b^*(w ) =   A^*(w),$ and, now using $x= T(\tau_{w}x)$, we get
$$b(x) - \lambda b(\tau_{w}x) = b(T(\tau_{w}x)) - \lambda b(\tau_{w}x) =$$
$$- A(\tau_{w}x) + A(\tau_{w}x) = R(\tau_{w}x) + A(\tau_{w}x).$$
Substituting the above in the previous equation we get
$$
(b^* + b - W)(x,w) - \lambda\, (b^* + b - W)(\tau_{w} (x) , \sigma(w))   =$$
$$
[b^*(w) - \lambda\,b^*(\sigma (w) )] + [b(x) - \lambda b(\tau_{w}x)]  - W(x,w) + \lambda W (\tau_{w}x , \sigma(w))  =$$
$$
- A^*(w) + R(\tau_{w} (x)) + A(\tau_{w}(x)) + \lambda\, W (\tau_{w} (x) , \sigma(w)) - W(x,w) =
R(\tau_{w}  (x)),$$
because $A^*(w)= A(\tau_{w}x) + \lambda\, W (\tau_{w}x , \sigma(w)) - W(x,w)$. So the claim follows.
\end{proof}
We present now a brief  outline of Transport Theory (see \cite{Vi1} \cite{Vi2} as a general reference).
\begin{definition}
We denote by $\mathcal{ K}( \mu, \mu^* )$ the set of
probabilities $\hat{\eta} (x,w)$ on $\hat{\Sigma}=S^1 \times \Sigma$, such that
$\pi_ x^*  (\hat{\eta} ) = \mu, \, \, \text{and}\,\,\pi_ w^*  (\hat{\eta} ) =
\mu^* \,.$
Each element in
$\mathcal{ K}( \mu, \mu^* )$ is called a plan.
\end{definition}
In Transport Theory one is interested in plans which minimize the integral a given lower semi-continuous  cost $c:\Sigma \to \mathbb{R}$.
The Classical Transport Theory is not a Dynamical Theory.
It is necessary to consider a dynamically defined cost in order to be able to get some results such that the optimal plan is invariant for some dynamics.
We are going to consider below the cost function $c(x,w) =- W(x,w)=-W_\lambda (x,w)$ where $W(x,w)$ is  a $\lambda$-involution  kernel of the Lipschitz potential $A$.
The Kantorovich Transport Problem: consider the minimization
problem
$$ C ( \mu, \mu^* )   \,=\,  \inf_{\hat{\eta} \in \mathcal{K}( \mu, \mu^* ) }  \int \int
- W(x,w) \, d\,\hat{\eta}. $$
\begin{definition}
A probability $  \hat{\eta} $ on $\hat{\Sigma} $ which attains such
infimum is called an optimal transport probability, or, an optimal plan,  for $c=-W$.
\end{definition}
It is natural to consider the bijective transformation  $\mathbb{T}$  which acts on $\hat{\Sigma}=S^1 \times \Sigma$ in such way that $\mathbb{T}^{-1}(x,w) = (\tau_{w}x , \sigma(w))$.
We will show  later that for $\mu_\lambda$ and $\mu_\lambda^*$ there exists a $\mathbb{T}$-invariant probability $\hat{\mu}_{min}$ which attains the optimal transport cost.
Dynamically defined costs can determine optimal plans which have dynamical properties.
\begin{definition}
A pair of continuous functions $f(x)$ and $f^{\#}(w)$ will be
called $c$-admissible (or, just admissible for short) if
$$
f^{\#} (w) =  \min_{x \in S^1} \, \{- f(x) + c(x,w) \}.
$$
\end{definition}
We denote by $\mathcal{F}$ the set of admissible pairs. The Kantorovich dual Problem: given the cost $c(x,w)$ consider  the maximization problem
$$
D (\mu  , \mu^*  )   \,=\,  \max_{(f, f^\#) \in \mathcal{ F} }\, (\, \int f  d \mu +  \int
f^\# d \mu^* \,).
$$
In this problem one is interested in any  pair (when exists)  $(f, f^\#) \in \mathcal{ F}$ which realizes the maximum in the right side of the above expression.
\begin{definition}
A pair of admissible $(f, f^\#)\in \mathcal{ F}$ which attains the
maximum value will be called an optimal Kantorovich pair.
\end{definition}
Under quite general conditions \cite{Vi1} (which are satisfied here)
$D (\mu , \mu^*  )= C (\mu  , \mu^*  ).$
We denote
$\Gamma=\Gamma_{b}=\{(x,w) \in S^1 \times \Sigma\, | \,b(x)=  (-b^*  +  W)(x,w)\}.$
A classical result in Transport Theory \cite{Vi1}:
if $\hat{\eta}$ is a
probability in  $\mathcal{ K}( \mu, \mu^* )$,  $(f,f^\#)$ is an admissible pair, and
the support of $\hat{\eta}$ is contained in the set
$\{\, (x,w)\,\, \in \hat{ \Sigma} \,| \,\mbox{such that}\,\, (f(x) + f^\#
(w))\, = \, c(x,w)\,\},$
then, $\hat{\eta}$ in an optimal plan for $c$ and $(f,f^\#)$ is an optimal pair in $\mathcal{ F}$.

This is the so called slackness condition of Linear Programming (see \cite{Vi2} Remark 5.13 page 59). This results allows one to get in some cases the solution of the primal problem
(which is looking for optimal plans) via de dual problem (which is looking for optimal pairs of functions). If you  have a good guess that a certain $\hat{\eta}$ is the optimal plan   you can try to find an admissible pair  satisfying  the above condition on the support of the plan. If you succeeded then  you show that the plan $\hat{\eta}$ is indeed the solution of the transport problem. This is the power of the dual problem  approach.

We will show that
for the problem $D (\mu_\lambda  , \mu^*_\lambda  )$ the functions
$-b$ and $-b^*$ define an optimal Kantorovich pair. From this fact becomes clear the importance of the set $\Gamma$.

Our main result in this section is:
\begin{theorem}  \label{mm} For the probabilities $\mu_\lambda  , \mu^*_\lambda $ and the cost $-W$, the associated  transport problem
is such that the functions
$-b$ and $-b^*$ define an optimal Kantorovich pair, and, the optimal plan is invariant by $\mathbb{T}$.
\end{theorem}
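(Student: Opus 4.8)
The plan is to run the standard duality argument of optimal transport: show that $(-b,-b^*)$ is feasible for the Kantorovich dual $D(\mu_\lambda,\mu_\lambda^*)$, construct a $\mathbb{T}$-invariant plan $\hat\mu_{min}\in\mathcal{K}(\mu_\lambda,\mu_\lambda^*)$ concentrated on the equality set $\Gamma$, and then close the argument with the complementary slackness criterion recalled above.

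First I would record feasibility. From the identity established just before the theorem, for every $(x,w)\in S^1\times\Sigma$ one has $(b^*+b-W)(x,w)=b(x)-S(x,w)\ge 0$, which is precisely the dual constraint $-b(x)+(-b^*(w))\le -W(x,w)=c(x,w)$; hence $(-b,-b^*)$ is feasible. The constraint is an equality exactly when $w$ realizes $x$, i.e. $b(x)=S(x,w)$; and since $b^*(w)-W(x,w)=-S(\overline x,w)-(S(x,w)-S(\overline x,w))=-S(x,w)$, this equality set is exactly $\Gamma$, which coincides with the set $\Omega$ of Corollary \ref{spread}.

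Next I would construct the plan. Fix $x_0\in S^1$, choose a realizer $a=a(x_0)=(i_0,i_1,\dots)$ with $b(x_0)=S(x_0,a)$, and set $x_{k+1}=\tau_{i_k}(x_k)$ as in Section \ref{sec2}. Applying the reciprocal part of Lemma \ref{invariance} inductively, $b(x_0)=S(x_0,a)$ forces $b(x_k)=S(x_k,\sigma^k a)$ for all $k$, so $(x_k,\sigma^k a)\in\Omega=\Gamma$; moreover $\mathbb{T}^{-1}(x_k,\sigma^k a)=(\tau_{i_k}x_k,\sigma^{k+1}a)=(x_{k+1},\sigma^{k+1}a)$, so the sequence $\bigl((x_k,\sigma^k a)\bigr)_k$ is a $\mathbb{T}^{-1}$-orbit. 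I would then form $\hat\mu_n=\frac1n\sum_{j=0}^{n-1}\delta_{(x_j,\sigma^j a)}$ on $\hat\Sigma$, whose $x$-marginal is the $\mu_n$ of Section \ref{sec2} and whose $w$-marginal is the $m_n$ of Section \ref{ET}, and pass to a single subsequence along which $\mu_n\to\mu_\lambda$, $m_n\to\mu_\lambda^*$ and $\hat\mu_n\to\hat\mu_{min}$ simultaneously; then $\hat\mu_{min}\in\mathcal{K}(\mu_\lambda,\mu_\lambda^*)$, and since $(\mathbb{T}^{-1})_*\hat\mu_n$ differs from $\hat\mu_n$ by a measure of total mass $O(1/n)$, the limit $\hat\mu_{min}$ is $\mathbb{T}^{-1}$-invariant, hence $\mathbb{T}$-invariant. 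Because $\Gamma$ is closed, $\mathrm{supp}\,\hat\mu_{min}\subseteq\Gamma$.

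Finally, on $\mathrm{supp}\,\hat\mu_{min}\subseteq\Gamma$ one has $-b(x)+(-b^*(w))=-W(x,w)=c(x,w)$, so the slackness result of Transport Theory recalled above yields at once that $\hat\mu_{min}$ is an optimal plan for $c=-W$ and that $(-b,-b^*)$ attains $D(\mu_\lambda,\mu_\lambda^*)=C(\mu_\lambda,\mu_\lambda^*)$, i.e. it is an optimal Kantorovich pair; to get a pair literally in $\mathcal{F}$ one may replace $-b^*$ by its $c$-transform $(-b)^c(w)=\min_x\{b(x)-W(x,w)\}$, which agrees with $-b^*$ on $\mathrm{supp}\,\mu_\lambda^*$ (for each such $w$ there is $x$ with $(x,w)\in\Gamma$) and hence changes neither the dual value nor any conclusion. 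The step I expect to be the main obstacle is the construction of $\hat\mu_{min}$: extracting a single subsequence along which all three empirical families converge so that the marginals of $\hat\mu_{min}$ are \emph{exactly} $\mu_\lambda$ and $\mu_\lambda^*$, and justifying that $\Gamma$ (equivalently $\Omega$) is closed --- which is delicate near $x=0\sim1$, where $S(\cdot,a)$ is only continuous up to the endpoint. That last point can be dealt with by working with one-sided limits at the endpoint, or by first building the $\mathbb{T}$-invariant measure on the part of $\Omega$ away from the discontinuity and then passing to closures.
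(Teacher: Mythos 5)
Your argument is correct and follows essentially the same route as the paper: the paper's proof of Theorem \ref{mm} itself only verifies admissibility via $p=b^*+b-W=b-S\ge 0$, and delegates the construction of the $\mathbb{T}$-invariant optimal plan supported on $\Gamma$ (via the empirical measures along a realizer orbit, with a common subsequence so that both marginals come out exactly $\mu_\lambda$ and $\mu_\lambda^*$) and the slackness conclusion to the two propositions that immediately follow --- which is precisely what you assemble in one self-contained piece. Your extra care with the exact-transform form of admissibility (replacing $-b^*$ by the $c$-transform of $-b$ where needed, noting they agree on $\mathrm{supp}\,\mu_\lambda^*$) addresses a point the paper glosses over, since the paper only checks the transform identity in the $x$-variable while its definition of $\mathcal{F}$ requires the identity in the $w$-variable.
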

\begin{proof}
We claim first that
$-b$ and $-b^*$ are $-W$-admissible.
Indeed,
$p(x,w):= (b^* + b   - W)(x,w) \geq 0.$
Moreover, for each $x$ there exists a $w$ which is a realizer and then
$p(x,w)=0$.
Therefore, for each $x$ we have that
\begin{equation} \label{bb}
b(x) =  \max_{w\in \Sigma} \{-b^* (w) + W(x,w)\}= \max_{w\in \Sigma} S( x,w).
\end{equation}
\end{proof}

For each $x$ we denote $w_x\in \Sigma $ the realizer  for the above equation.
We can say that $b$ is the $W$ transform of $-b^*$ \cite{Vi1} and \cite{Vi2}.
Note that
$$\Gamma=\{(x,w) \in S^1 \times \Sigma \,| \, p(x,w)=0\}.$$
We will show that  the infimum of the cost $-W$, denoted  $c(A,\lambda)$, is equal to   $\int - b^* \,d \mu^*_\lambda + \int -b\, d \mu_\lambda.$

The next proposition  is similar to a result on \cite{LOS}. Remember that
$R=-(A - b \circ T + \lambda b)\geq 0$ is called the rate function.

\begin{proposition}{\bf
(Fundamental relation)}  For any $(x,w)$
\begin{equation}\label{FR}
 R(\tau_{w}x)= p(x,w) - \lambda\,p(\tau_{w}x , \sigma(w))
\end{equation}
Moreover,
if $\,\mathbb{T}^{-1}(x,w) = (\tau_{w}x , \sigma(w))$, then\\
a) $p -\lambda\, p \circ \mathbb{T}^{-1} (x,w) = R(\tau_{w}x) \geq 0$;\\
b) $\Gamma$ is  invariant by the action of $\mathbb{T}^{-1}$;\\
c) if $a=(i_0,i_1,i_2,...)$ is optimal for $x$, then $\sigma^n (a)$ is optimal for $( \tau_{i_{n-1}}\circ\,...\,\circ\tau_{i_1}\circ \tau_{i_0}) (x).$\\
\end{proposition}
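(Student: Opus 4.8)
The plan is to reduce everything to the identity established in the Proposition displayed just before Theorem~\ref{mm}. Recall the abbreviation $p := b^{*} + b - W \ge 0$. That earlier proposition says, for every $(x,w)$,
$$R(\tau_{w}x) = (b^{*}+b-W)(x,w) - \lambda\,(b^{*}+b-W)(\tau_{w}x,\sigma(w)) = p(x,w) - \lambda\,p(\tau_{w}x,\sigma(w)),$$
which is precisely the Fundamental relation (\ref{FR}). Part (a) is then just a rewriting: since $\mathbb{T}^{-1}(x,w) = (\tau_{w}x,\sigma(w))$, the right-hand side equals $(p - \lambda\,p\circ\mathbb{T}^{-1})(x,w)$, and $R(\tau_{w}x)\ge 0$ because $R = b\circ T - \lambda b - A \ge 0$ everywhere, this being the defining inequality satisfied by the $\lambda$-calibrated subaction.

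For (b), recall $\Gamma = \{(x,w) : p(x,w)=0\}$ and that $p$ is nonnegative everywhere (indeed $p(x,w) = b(x) - S(x,w)\ge 0$ by the Envelope Theorem, equivalently by (\ref{bb})). Let $(x,w)\in\Gamma$. By (a),
$$0 = p(x,w) = R(\tau_{w}x) + \lambda\,p(\tau_{w}x,\sigma(w)),$$
a sum of two nonnegative quantities with $\lambda>0$; hence both vanish. In particular $p(\tau_{w}x,\sigma(w))=0$, that is $\mathbb{T}^{-1}(x,w) = (\tau_{w}x,\sigma(w))\in\Gamma$, so $\mathbb{T}^{-1}(\Gamma)\subseteq\Gamma$.

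For (c), observe that ``$a$ is optimal for $x$'' means $b(x) = S(x,a)$, which, since $p(x,a) = b(x) - S(x,a)$, is the same as $(x,a)\in\Gamma$. Writing $a=(i_{0},i_{1},i_{2},\dots)$ and iterating $\mathbb{T}^{-1}$, one gets $(\mathbb{T}^{-1})^{n}(x,a) = \bigl((\tau_{i_{n-1}}\circ\cdots\circ\tau_{i_{1}}\circ\tau_{i_{0}})(x),\ \sigma^{n}(a)\bigr)$; by (b) this pair again lies in $\Gamma$, which says exactly that $\sigma^{n}(a)$ is optimal for $(\tau_{i_{n-1}}\circ\cdots\circ\tau_{i_{0}})(x)$. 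Since all three parts follow from the prior identity together with the nonnegativity of $p$ and $R$, there is no genuine obstacle here; the only points to handle carefully are the sign convention for $R$ used in this section (so that the ``sum of nonnegatives forces each term to vanish'' step in (b) is legitimate) and the elementary check that the $n$-fold composition of $\mathbb{T}^{-1}$ yields the stated preimage together with $\sigma^{n}(a)$.
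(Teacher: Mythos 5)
Your proof is correct and takes essentially the same route as the paper's: identity (\ref{FR}) and part (a) are a restatement of the earlier proposition $R(\tau_{w}x)=(b^{*}+b-W)(x,w)-\lambda\,(b^{*}+b-W)(\tau_{w}x,\sigma(w))$ together with the nonnegativity of $R$, part (b) follows from $p\ge 0$ exactly as in the paper, and part (c) is the iteration of (b). You also correctly handle the sign convention $R=b\circ T-\lambda b-A\ge 0$ used in this section.
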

\begin{proof}
The first claim a) is a trivial consequence of the definition of $\mathbb{T}^{-1}$. The second one it is a consequence of: $p\geq 0$, and
$$p - \lambda\, ( p \circ \mathbb{T}^{-1} )(x,w)   \geq 0  \Rightarrow p(x,w)  \geq  \lambda \, ( p \circ \mathbb{T}^{-1} )(x,w).$$
 From the above we get that in the case $(x,w)$ is optimal, then, $\mathbb{T}^{-1} (x,w) $ is also optimal. Indeed, we have that
$p(x,w)=0 \to p(\tau_{w}(x) , \sigma(w))=0.$
Item c) follows by induction.
\end{proof}
{\bf In this way $\mathbb{T}^{-n}$ spread optimal pairs}. This is a nice property that has no counterpart in the Classical Transport Theory.

Take now $(z_0,w_0)\in \Gamma_V$ and, for each $n$,
$ \hat{\mu}_n=\frac{1}{n}\, \sum_{j=0}^{n-1} \delta_{\mathbb{T}^{-j} (z_0, w_0)}.$
Note that $\mathbb{T}^{-j} (z_0, w_0)$ is optimal. The closure of the set
$\{ \mathbb{T}^{-j} (z_0, w_0),\, j\in \mathbb{N}\}$
is contained in the support of the optimal transport plan.
\begin{proposition}
We claim that any weak limit of convergent subsequence $\hat{\mu}_{n_k}$, $k \to \infty$, will define a probability $\hat{\mu}$ which is optimal for the transport problem for $-W$ and its marginals.
In this way we will show the existence of a $\mathbb{T}$-invariant probability on $S^1 \times \Sigma$ which is optimal
for the associated transport problem.
\end{proposition}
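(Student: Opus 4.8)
The plan is to run a Krylov--Bogolyubov argument for $\mathbb{T}^{-1}$ together with the complementary slackness criterion for the transport problem recalled just above, using the set $\Gamma=\{p=0\}$ as the contact set of the admissible pair $(-b,-b^*)$ supplied by Theorem~\ref{mm}.

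First I would fix the data. Since $\hat\Sigma=S^1\times\Sigma$ is compact and metrizable, the empirical measures $\hat\mu_n$ have weak-$*$ convergent subsequences; choose a subsequence $n_k\to\infty$, refine it so that in addition the marginals $\pi_x^*\hat\mu_{n_k}$ and $\pi_w^*\hat\mu_{n_k}$ converge, and let $\hat\mu$ be the limit of $\hat\mu_{n_k}$. Take the base point $(z_0,w_0)\in\Gamma$ to be $z_0=x_0$, $w_0=a(x_0)$ as in Section~\ref{sec2}; then $\mathbb{T}^{-j}(z_0,w_0)=(x_j,\sigma^j(w_0))$, so $\pi_x^*\hat\mu_n$ is the measure $\mu_n$ and $\pi_w^*\hat\mu_n$ is the measure $m_n$ of the constructions of $\mu_\lambda$ and $\mu_\lambda^*$. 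As $\pi_x,\pi_w$ are continuous, along the chosen subsequence $\pi_x^*\hat\mu=\mu_\lambda$ and $\pi_w^*\hat\mu=\mu_\lambda^*$, i.e. $\hat\mu\in\mathcal{K}(\mu_\lambda,\mu_\lambda^*)$.

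Next comes $\mathbb{T}$-invariance. Because $\mathbb{T}^{-1}(x,w)=(\tau_{w_0}(x),\sigma(w))$ is continuous (away from the branch boundaries, which the argument will handle as a null set), pushing forward by $\mathbb{T}^{-1}$ commutes with weak limits, and
$$(\mathbb{T}^{-1})_*\hat\mu_n-\hat\mu_n=\frac1n\big(\delta_{\mathbb{T}^{-n}(z_0,w_0)}-\delta_{(z_0,w_0)}\big),$$
whose total variation is at most $2/n\to 0$. Hence $(\mathbb{T}^{-1})_*\hat\mu=\hat\mu$, and since $\mathbb{T}$ is a bijection on the full-measure set where it is defined, $\hat\mu$ is $\mathbb{T}$-invariant. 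For optimality, by Theorem~\ref{mm} the pair $(-b,-b^*)$ is $(-W)$-admissible, with contact set
$$\{(x,w):(-b)(x)+(-b^*)(w)=-W(x,w)\}=\{(x,w):p(x,w)=0\}=\Gamma .$$
By item (b) of the Fundamental relation, $\Gamma$ is $\mathbb{T}^{-1}$-invariant and contains $(z_0,w_0)$, so the forward $\mathbb{T}^{-1}$-orbit of $(z_0,w_0)$, hence the support of every $\hat\mu_n$, lies in $\Gamma$; as $\Gamma$ is closed (the zero set of the continuous nonnegative $p$, off the exceptional fibre over $0$), the support of $\hat\mu$ is contained in $\Gamma$. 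The quoted classical slackness result then gives immediately that $\hat\mu$ is an optimal plan for the cost $-W$ between its marginals and that $(-b,-b^*)$ is an optimal Kantorovich pair; combining with the previous step produces a $\mathbb{T}$-invariant optimal plan, and in particular $C(\mu_\lambda,\mu_\lambda^*)=\int(-b)\,d\mu_\lambda+\int(-b^*)\,d\mu_\lambda^*$.

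The step I expect to be delicate is not the measure-theoretic machinery but the lack of genuine global continuity: $S(\cdot,a)$, and hence $W$ and $\mathbb{T}^{-1}$, are only continuous off the point $0\in S^1$ and the $d$ branch boundaries. One must therefore check that this exceptional set carries no $\hat\mu$-mass --- equivalently, that the orbit $\{\mathbb{T}^{-j}(z_0,w_0)\}$ and its accumulation points avoid it, or else pass to the $F$-picture (via $\phi(x,a)=(x,S(x,a))$), where the maps involved are genuinely continuous and the cost is lower semicontinuous --- so that the Krylov--Bogolyubov computation and the inclusion of the support in $\Gamma$ are legitimate. Once this point is secured, the rest is routine bookkeeping.
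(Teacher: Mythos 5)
Your proposal is correct and follows essentially the same route as the paper: pass to a common subsequence so the marginals converge to $\mu_\lambda$ and $\mu_\lambda^*$, observe that $p=b^*+b-W$ vanishes on the $\mathbb{T}^{-1}$-orbit of $(z_0,w_0)$ and hence on the support of the limit, and invoke the complementary slackness criterion with the admissible pair $(-b,-b^*)$. You are in fact somewhat more careful than the paper, which leaves the Krylov--Bogolyubov computation for $\mathbb{T}$-invariance and the continuity caveat at the branch boundaries implicit.
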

\begin{proof}
Indeed, we considered before a certain $z_0$, its realizer $w_0$, and then a convergent subsequence  $\mu_{n_k}$ (notation of last section), $n_k\to \infty,$ in order to get $\mu_\lambda$. If we consider above the corresponding subsequence  $\mathbb{T}^{-n_k} (z_0, w_0)$ we get that
the projection of $\hat{\mu}$ on the $S^1$ coordinate is $\mu_\lambda$.

In an analogous way,
we consider as before  a certain $z_0$, its realizer $w_0$, and then a convergent subsequence $m_k$ to define $\mu_\lambda^*$. If we consider above a subsequence $m_k$ of the previous sequence $n_k$ (last paragraph) we get that
the projection of $\hat{\mu}$  on the $\Sigma$ coordinate is $\mu_\lambda^*$.
As $p(x,w)= (b^* + b   - W)(x,w)$ and $p$ is zero on the orbit $\mathbb{T}^{-n_k} (z_0, w_0)$ we get that $g$ is also zero in the support of any associated weak convergent subsequence.
Then any probability $\hat{\mu}$ obtained in this way is such that  projects respectively  on  $\mu_\lambda$ and $\mu_\lambda^*$, and, moreover, satisfies
$$\int - W d \hat{\mu} = \int (- b^*) d \mu^* + \int (- b)\, d \mu_\lambda.$$
Therefore, $ C ( \mu, \mu^* )  = \int (- b^*) d \mu^* + \int (- b) d \mu_\lambda.$
\end{proof}
We point out that for  the purpose of proving the conjecture the next proposition is the key result. It is just a trivial  consequence of Theorem \ref{mm} and expression (\ref{bb}).

\begin{proposition} \label{he} Suppose  that $A$ is Lipschitz, the maximizing probability  $\mu_\lambda$ has support in a unique periodic orbit  of period $k$ and
$\mu_\lambda^*$ is a dual $\lambda$-maximizer with support on the dual periodic orbit  of period $k$ for $\sigma$, then
$$b(x) =  \max_{w\in \Sigma} \{-b^* (w) + W(x,w)\}= -b^* (a) + W(x,a)\,=$$
$\,S(x,a)= \max_{w\in \Sigma} S( x,w),$
where $a=a(x)$ is the periodic realizer of $x$. In this case $a$ is in the support of $\mu_\lambda^*$.
Moreover, the procedure: given $(z_0,a(z_0))\in \Gamma_V$ take $\hat{\nu}$
$$\lim_{n \to \infty}\, \frac{1}{n}\, \sum_{j=0}^{n-1} \delta_{\mathbb{T}^{-j} (z_0, a(z_0))}\,=\, \hat{\nu}, $$
is such that $\hat{\nu}$ is optimal and has support on a periodic orbit for $\mathbb{T}.$
In the support of $\hat{\nu}$ we have $b(x) + b^* (a(x)) = W(x,a(x))$.
\end{proposition}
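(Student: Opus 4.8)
The plan is to read off every asserted equality by combining Theorem~\ref{mm}, the pointwise identity linking $S$, $W$ and $b^*$, and Proposition~\ref{puxa}, and then to close up one $\mathbb{T}^{-1}$-orbit to produce the invariant optimal plan $\hat\nu$.

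First I would record the identity $S(x,w)=W(x,w)-b^*(w)$, valid for every $(x,w)$: it follows from $W(x,w)=S(x,w)-S(\overline x,w)$ together with $b^*(w)=-S(\overline x,w)$. Plugging this into expression (\ref{bb}) --- which is exactly the assertion, extracted from Theorem~\ref{mm}, that $b(x)=\max_{w\in\Sigma}\{-b^*(w)+W(x,w)\}=\max_{w\in\Sigma}S(x,w)$ --- produces the full chain $b(x)=\max_w\{-b^*(w)+W(x,w)\}=-b^*(a)+W(x,a)=S(x,a)=\max_w S(x,w)$ as soon as one knows the maximum is attained at a realizer $a=a(x)$; that is the content of the Envelope Theorem~\ref{envthm}, together with the fact (used in the proof of Theorem~\ref{mm}) that $p(x,w):=(b^*+b-W)(x,w)\geq 0$ vanishes precisely at the realizers, so $p(x,a(x))=0$. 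I then specialize $x$ to a point of the period-$k$ orbit supporting $\mu_\lambda$: Proposition~\ref{puxa} allows $a(x)$ to be chosen a period-$k$ orbit for $\sigma$, and since by hypothesis $\mu_\lambda^*$ is supported on the dual period-$k$ orbit $\{a(x),\sigma a(x),\dots,\sigma^{k-1}a(x)\}$, the realizer $a(x)$ lies in the support of $\mu_\lambda^*$.

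For the ``Moreover'' part I would take $z_0$ a point of the periodic orbit, $a(z_0)$ its periodic realizer, so $(z_0,a(z_0))\in\Gamma$. The crucial observation is that $\mathbb{T}^{-k}(z_0,a(z_0))=(z_0,a(z_0))$: the $\Sigma$-coordinate has period $k$ under $\sigma$ by construction, while the $S^1$-coordinate returns to $z_0$ because --- just as in the proof of Proposition~\ref{puxa} --- the inverse branches prescribed by $a(z_0)$ traverse the orbit backward, $\tau_{a_0}(z_0)=T^{k-1}(z_0)$, $\tau_{a_1}(T^{k-1}(z_0))=T^{k-2}(z_0)$, and so on, the $\lambda$-cohomology equation (\ref{tuto}) guaranteeing at each step that this preimage is the one realizing the maximum. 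Consequently the Cesàro averages $\frac1n\sum_{j=0}^{n-1}\delta_{\mathbb{T}^{-j}(z_0,a(z_0))}$ converge (genuinely, not merely along a subsequence) to $\hat\nu=\frac1k\sum_{j=0}^{k-1}\delta_{\mathbb{T}^{-j}(z_0,a(z_0))}$, a $\mathbb{T}$-invariant probability carried by a $\mathbb{T}$-periodic orbit inside $\Gamma$ (using the $\mathbb{T}^{-1}$-invariance of $\Gamma$ from Corollary~\ref{spread}). Its $S^1$- and $\Sigma$-marginals are the uniform measures on the respective period-$k$ orbits, namely $\mu_\lambda$ and $\mu_\lambda^*$; so $\hat\nu\in\mathcal{K}(\mu_\lambda,\mu_\lambda^*)$ is supported on $\Gamma=\{(x,w):(-b)(x)+(-b^*)(w)=-W(x,w)\}$ with $(-b,-b^*)$ an admissible pair (Theorem~\ref{mm}), and the slackness criterion of Transport Theory then forces $\hat\nu$ to be optimal. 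Finally $p\equiv 0$ on $\Gamma$ reads $b(x)+b^*(a(x))=W(x,a(x))$ on the support of $\hat\nu$.

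I expect the only genuinely non-routine point to be the verification that the inverse-branch orbit of $(z_0,a(z_0))$ closes up after exactly $k$ steps; every remaining equality is a direct substitution into identities already in place before the statement, which is why the text calls the result a trivial consequence of Theorem~\ref{mm} and (\ref{bb}).
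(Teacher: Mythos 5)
Your proposal is correct and follows exactly the route the paper intends: the text offers no written proof, declaring the proposition ``a trivial consequence of Theorem \ref{mm} and expression (\ref{bb})'', and your write-up supplies precisely those ingredients --- the identity $-b^*(w)+W(x,w)=S(x,w)$ from $b^*(w)=-S(\overline x,w)$, the vanishing of $p$ at realizers, Proposition \ref{puxa} for periodicity of $a(z_0)$ so that $\mathbb{T}^{-k}(z_0,a(z_0))=(z_0,a(z_0))$, and the slackness criterion already set up in the preceding propositions. Your decision to read the clause ``$a$ is in the support of $\mu_\lambda^*$'' as pertaining to $x$ in the supporting periodic orbit is the correct (indeed the only tenable) interpretation, since the paper's own drifted-quadratic example, where a third piece $S(x,001010\ldots)$ appears, shows the unrestricted claim would be false.
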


\begin{proposition} \label{titi} Suppose $W$ satisfies a twist condition. Denote by $\mathfrak{w}:S^1 \to \Sigma$  the function such that for a given $x$ we have that $\mathfrak{w}(x)$ is a choice of the eventual possible $w_x$ as defined above. Then, $\mathfrak{w}$ is monotonous non-decreasing (using the lexicographic order in $\Sigma$)
\end{proposition}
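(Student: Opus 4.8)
The plan is to read off the monotonicity of $\mathfrak{w}$ directly from the maximizing property of the realizers together with the transversality encoded by the twist condition, in the spirit of Proposition~2.1 of \cite{CLO}; apart from~(\ref{bb}) none of the transport machinery is really needed. By the Envelope Theorem~\ref{envthm} and~(\ref{bb}), for every $x$ the symbol $\mathfrak{w}(x)=w_x$ maximizes $w\mapsto -b^*(w)+W(x,w)=S(x,w)$, so
\[
-b^*(w_x)+W(x,w_x)\ \geq\ -b^*(w)+W(x,w)\qquad\text{for all }w\in\Sigma .
\]

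First I would fix $x_1<x_2$ lying in a common arc on which each $W(\cdot,a)$ is $C^1$, set $w_1=\mathfrak{w}(x_1)$, $w_2=\mathfrak{w}(x_2)$, write down the displayed inequality at $x_1$ with $w=w_2$ and at $x_2$ with $w=w_1$, and add the two. The $b^*$-terms cancel, leaving the two-point (``complementary slackness'') inequality
\[
W(x_1,w_1)-W(x_1,w_2)\ \geq\ W(x_2,w_1)-W(x_2,w_2),
\]
that is $\phi(x_1)\geq\phi(x_2)$ for $\phi(x):=W(x,w_1)-W(x,w_2)$, which is $C^1$ on $(x_1,x_2)$. Next I would assume, for contradiction, that $w_1$ and $w_2$ are ordered opposite to the claimed conclusion; then Definition~\ref{tui}, applied with a fixed sign to $\phi'(x)=\frac{\partial W}{\partial x}(x,w_1)-\frac{\partial W}{\partial x}(x,w_2)$, shows that $\phi'$ is continuous and of constant nonzero sign on $(x_1,x_2)$, so by the mean value theorem $\phi$ is strictly monotone on $[x_1,x_2]$ with the sign incompatible with $\phi(x_1)\geq\phi(x_2)$. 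Hence $w_1,w_2$ lie in the order dictated by the twist sign; since the realizers were arbitrary among the admissible choices, $\mathfrak{w}$ is monotone on each such arc, hence on $(0,1)$.

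The step I expect to demand the most care is purely bookkeeping: matching the sign in the twist inequality of Definition~\ref{tui} with the direction of the lexicographic order on $\Sigma$ so that the mean-value-theorem contradiction points the right way, and handling the cut point $x=0$ of the circle, where $W(\cdot,a)$ is only continuous, so that the monotonicity statement is really intrinsic to $(0,1)$. I would also note, using that the optimal symbolic element is unique wherever $b$ is differentiable (as remarked after Proposition~\ref{ooi}), that the ``choice'' implicit in the definition of $\mathfrak{w}$ is forced except on the finite turning set of Corollary~\ref{twist finite optm}, so the monotonicity of $\mathfrak{w}$ is essentially the monotonicity of the single-valued part of $x\mapsto a(x)$.
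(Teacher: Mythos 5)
Your plan is exactly the exchange (cyclical monotonicity) argument that the paper delegates to Proposition~2.1 of \cite{CLO} and Proposition~6.2 of \cite{LOS}: the two optimality inequalities at $x_1$ and $x_2$ with the realizers swapped, cancellation of the $b^*$ terms, the two-point inequality $\phi(x_1)\geq\phi(x_2)$ for $\phi=W(\cdot,w_1)-W(\cdot,w_2)$, and then the twist condition plus the mean value theorem. That is the right and intended route, and the reduction to (\ref{bb}) is correct.

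However, the one step you defer --- ``matching the sign in the twist inequality with the direction of the lexicographic order'' --- is precisely where the argument does not close as you describe it. Carry it out: to contradict $\phi(x_1)\geq\phi(x_2)$ you need $\phi$ strictly \emph{increasing}, i.e.\ $\frac{\partial W}{\partial x}(x,w_1)-\frac{\partial W}{\partial x}(x,w_2)>0$, which by Definition~\ref{tui} is the case $w_1<w_2$. So the hypothesis that gets refuted is $w_1<w_2$, and the conclusion of the exchange argument is $w_1\geq w_2$ for $x_1<x_2$: with the sign convention of Definition~\ref{tui} as written, $\mathfrak{w}$ comes out monotone \emph{non-increasing}, not non-decreasing. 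If instead you assume ``$w_1,w_2$ ordered opposite to the claimed conclusion,'' i.e.\ $w_1>w_2$, the twist gives $\phi'<0$, hence $\phi(x_1)>\phi(x_2)$, which is \emph{compatible} with the two-point inequality and yields no contradiction. Note that non-increasing is also what every worked example in the paper exhibits: in Theorem~\ref{symmet} the realizer drops from $(10)^\infty$ to the lexicographically smaller $(01)^\infty$ as $x$ crosses $1/2$, and in Theorem~\ref{okl} it decreases through $(110)^\infty>(101)^\infty>(011)^\infty$. So either the proposition's ``non-decreasing'' must be read with the reversed order on $\Sigma$ (equivalently, the inequality in Definition~\ref{tui} reversed), or your proof should conclude non-increasing. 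The rest of your write-up --- restriction to $(0,1)$ where $W(\cdot,a)$ is $C^1$, and the observation that the choice in $\mathfrak{w}$ is forced off the finite set of crossing points --- is fine.
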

The proof of this proposition is the same as the one in Proposition 6.2 in \cite{LOS} or Proposition 2.1 in \cite{CLO}.
In \cite{LM} other kinds of results in Ergodic Transport Theory are considered.

\medskip

Consider $0<\lambda<1$, and the map  $G(w,s) = ( \sigma(w) , \lambda \, s + A^* (w))$, where $G:  \{1,2,..,d\}^\mathbb{N}\times\mathbb{R} \to  \{1,2,..,d\}^\mathbb{N}\times\mathbb{R}$, and $A^*: \{1,2,..,d\}^\mathbb{N}\to \mathbb{R}$ is the dual potential.

\vspace{0.1cm}

The  dynamics of attractor for $F$  has associated to it a dual repellor naturally defined by $G$ acting on  on $\{1,2,..,d\}^\mathbb{N}\times\mathbb{R}$ . The boundary of the repellor set is the graph of $b^* : \{1,2,..,d\}^\mathbb{N} \to \mathbb{R}$ which is  the $\lambda$-dual calibrated subaction.

\end{document}